\newcommand{\cA}{\mathcal{A}}
\newcommand{\cC}{\mathcal{C}}
\newcommand{\cE}{\mathcal{E}}
\newcommand{\cF}{\mathcal{F}}
\newcommand{\cH}{\mathcal{H}}
\newcommand{\cO}{\mathcal{O}}
\newcommand{\cP}{\mathcal{P}}
\newcommand{\cQ}{\mathcal{Q}}
\newcommand{\cR}{\mathcal{R}}
\newcommand{\cT}{\mathcal{T}}
\newcommand{\cX}{\mathcal{X}}
\newcommand{\NN}{\mathbb{N}}
\newcommand{\PP}{\mathbb{P}}
\newcommand{\RR}{\mathbb{R}}
\newcommand{\1}{\mathds{1}}
\newcommand{\kl}[2]{\text{KL}(#1 \| #2)}
\newcommand*{\tv}[2]{\mathrm{d_{TV}}(#1, #2)}
\newcommand*{\hel}[2]{\mathrm{d_{H^2}}(#1, #2)}
\newcommand*{\chis}[2]{\chi^2(#1\| #2)}
\newcommand*{\triplenorm}[1]{{\left\vert\kern-0.25ex\left\vert\kern-0.25ex\left\vert #1
    \right\vert\kern-0.25ex\right\vert\kern-0.25ex\right\vert}}
\DeclareMathOperator{\supp}{supp}
\newcommand{\R}{\mathbb{R}}
\newcommand{\Rd}{\mathbb{R}^d}
\renewcommand{\phi}{\varphi}
\newcommand{\eps}{\varepsilon}
\newcommand{\sse}{\subseteq}
\newcommand*{\E}{\mathbb E}
\newcommand*{\p}[1]{\mathbb P\left\{#1\right\}}
\newcommand*{\ep}{\varepsilon}
\newcommand*{\defeq}{\coloneqq}
\newcommand*{\rd}{\mathrm{d}}
\newcommand*{\dd}{\, \rd}
\DeclareMathOperator*{\argmin}{argmin}
\DeclareMathOperator*{\argmax}{argmax}
\newcommand{\Tnn}{\hat{T}_{\text{1NN}}}
\DeclareMathOperator{\OT}{S_0}
\DeclareMathOperator{\OTep}{S_\eps}
\theoremstyle{plain}
\newtheorem{theorem}{Theorem}[section]
\newtheorem{proposition}[theorem]{Proposition}
\newtheorem{lemma}[theorem]{Lemma}
\newtheorem{corollary}[theorem]{Corollary}
\theoremstyle{definition}
\theoremstyle{remark}
\newtheorem{remark}[theorem]{Remark}
\newtheorem{example}[theorem]{Example}
\begin{document}

\vspace*{0.3in}

\begin{center} {\LARGE{{Minimax estimation of discontinuous optimal transport maps: The semi-discrete case}}}

{\large{
\vspace*{.3in}
\begin{tabular}{cccc}
Aram-Alexandre Pooladian$^{1,*}$, Vincent Divol$^{2,*}$, Jonathan Niles-Weed$^{1,3}$\\
\end{tabular}
{
\vspace*{.1in}
\begin{tabular}{c}
				$^1$Center for Data Science, New York University\\
				$^2$\textsc{Ceremade}, Universit{\'e} Paris Dauphine-PSL \\
                $^3$Courant Institute of Mathematical Sciences, New York University\\
 				\small{\texttt{aram-alexandre.pooladian@nyu.edu,vincent.divol@psl.eu, jnw@cims.nyu.edu}}\\
\end{tabular} 
}

}}
\vspace*{.1in}

\today

\end{center}

\vspace*{.1in}

\begin{abstract}
We consider the problem of estimating the optimal transport map between two probability distributions, $P$ and $Q$ in $\Rd$, on the basis of i.i.d.\ samples.
All existing statistical analyses of this problem require the assumption that the transport map is Lipschitz, a strong requirement that, in particular, excludes any examples where the transport map is discontinuous.
As a first step towards developing estimation procedures for discontinuous maps, we consider the important special case where the data distribution $Q$ is a discrete measure supported on a finite number of points in $\Rd$.
We study a computationally efficient estimator initially proposed by \citep{pooladian2021entropic}, based on entropic optimal transport, and show in the semi-discrete setting that it converges at the minimax-optimal rate $n^{-1/2}$, independent of dimension.
Other standard map estimation techniques both lack finite-sample guarantees in this setting and provably suffer from the curse of dimensionality.
We confirm these results in numerical experiments, and provide experiments for other settings, not covered by our theory, which indicate that the entropic estimator is a promising methodology for other discontinuous transport map estimation problems.
\end{abstract}

\footnotetext{*Pooladian and Divol contributed equally to this work.}
\section{Introduction}\label{sec: intro}

The theory of optimal transport (OT) defines a natural geometry on the space of probability measures \cite{San15,Vil08} and has become ubiquitous in modern data-driven tasks. In this area, \textit{optimal transport maps} are a central object of study: suppose $P$ and $Q$ are two probability distributions with finite second moments, with $P$ having a density with respect to the Lebesegue measure on $\Rd$. Then, Brenier's theorem (see \cref{sec: background_ot}) states that there exists a convex function $\phi_0$ whose gradient defines a unique \textit{optimal transport map} between $P$ and $Q$. This map is optimal in the sense that it minimizes the following objective function:
\begin{align}\label{eq: monge_intro}
    \nabla\phi_0 \defeq \argmin_{T \in \cT(P,Q)} \int \tfrac12 \|x - T(x)\|^2 \dd P(x)\,,
\end{align}
where $\cT(P,Q) \defeq \{ T : \Rd \to \Rd  \ | \ X \sim P,\ T(X)\sim Q\}$ is the set of transport maps between $P$ and $Q$. The optimal value of the objective function in \Cref{eq: monge_intro} is called the (squared) 2-Wasserstein distance, written explicitly as 
\begin{align*}
\OT(P,Q) = \int \tfrac12 \|x - \nabla \phi_0(x)\|^2 \dd P(x)\,,
\end{align*}
though a more general formulation is available (see \Cref{sec: background_ot}). Computing or approximating $\OT(P,Q)$ as well as $\nabla\phi_0$ has found use in several academic communities, such as economics ~\cite{carlier2016vector,chernozhukov2017monge,torous2021optimal,gunsilius2021matching}, computational biology ~\cite{bunne2021learning,bunne2022supervised,lubeck2022neural,schiebinger2019optimal,moriel2021novosparc,Demetci2021.SCOTv2,dai2018autoencoder}, and computer vision ~\cite{SolPeyKim16,SolGoePey15,feydy2017optimal}, among many others.
 
Practitioners seldom have access to $P$ or $Q$, but instead have access to i.i.d.~samples $X_1,\ldots,X_n \sim P$ and $Y_1,\ldots,Y_n \sim Q$. On the basis of these samples, practitioners face both computational and statistical challenges when estimating $\nabla \phi_0$.
From a theoretical perspective, the statistical task of estimating optimal transport maps has attracted much interest in the last few years \cite{hutter2021minimax,muzellec2021near,manole2021plugin,deb2021rates,pooladian2021entropic, divol2022optimal,GhoSen22}.

The first finite-sample analysis of this problem was performed by \citep{hutter2021minimax}, who proposed an estimator for $\nabla \phi_0$ under the assumption that $\phi_0$ is $s+1$-times continuously differentiable, for $s > 1$.
They showed that a wavelet-based estimator $\hat{\phi}_{\text{W}}$ satisfies
\begin{align*}
	\E \| \nabla \hat{\phi}_{\text{W}} - \nabla \phi_0\|_{L^2(P)}^2 \lesssim  n^{-\frac{2s}{2s + d - 2}}\log^2(n)\,,
\end{align*}
and that this rate is minimax optimal up to logarithmic factors.
Their analysis requires that $P$ and $Q$ have bounded densities with compact support $\Omega \sse \Rd$, and that $\phi_0$ be both strongly convex and smooth.
Implementing the estimator $\hat{\phi}_{\text{W}}$ is computationally challenging even in moderate dimensions, and is practically infeasible for $d > 3$.
Follow up work has proposed alternative estimators which improve upon $\hat{\phi}_{\text{W}}$ either in computational efficiency or in the generality in which they apply.
Though these subsequent works go significantly beyond the setting considered by \citep{hutter2021minimax}, none has eliminated the crucial assumption that $\phi_0$ is smooth, i.e., that the transport map $\nabla \phi_0$ is Lipschitz.

We highlight two estimators proposed in this line of work that are particularly practical.
\citep{manole2021plugin} study the $1$-Nearest Neighbor estimator $\hat{T}_{\text{1NN}}$. 
This estimator is obtained by solving the empirical optimal transport problem between the samples,
which is then extended to a function defined on $\R^d$ using a projection scheme; see \cref{sec: num_all} for more details. Given $n$ samples from the source and target measures in $\Rd$, $\hat{T}_{\text{1NN}}$ has a runtime of $\cO(n^3)$ via the Hungarian Algorithm  \citep[see][Chapter 3]{PeyCut19}, and, for $d \geq 5$, achieves the rate 
\begin{equation}\label{1nn_rate}
	\E \| \hat T_{\text{1NN}} - \nabla \phi_0\|_{L^2(P)}^2 \lesssim n^{-\frac{2}{ d }}\,
\end{equation}
whenever the optimal Brenier potential $\phi_0$ is smooth and strongly convex, and under mild regularity conditions on $P$.
In another work, \citep{pooladian2021entropic} conducted a statistical analysis of an estimator originally proposed by \citep{seguy2017large} based on entropic optimal transport.
The efficiency of Sinkhorn's algorithm for large-scale problems~\cite{cuturi2013sinkhorn,PeyCut19} makes this estimator attractive from a computational perspective, and \citep{pooladian2021entropic} also give statistical guarantees, though these fall short of being minimax-optimal.

Despite this progress, none of the aforementioned results can be applied in situations where $\nabla \phi_0$ is not Lipschitz.
And in practice, even requiring the \emph{continuity} of the transport map can be far too stringent.
It is indeed too much to hope for that an underlying data distribution (e.g.~over the space of images) has one single connected component; this is supported by recent work that stipulates that the underlying data distribution is the union of \textit{disjoint} manifolds of varying intrinsic dimension \cite{brown2022union}. In such a setting, the transport map $\nabla\phi_0$ will not be continuous, demonstrating the need of considering the problem of the statistical estimation of \emph{discontinuous} transport maps to get closer to real-world situations.

As a first step, we choose to focus on the case where the target distribution $Q=\sum_{j=1}^J q_j \delta_{y_j}$ is discrete while the source measure $P$ has full support, often called the \emph{semi-discrete} setting in the optimal transport literature.
 In this setting, the optimal transport map $\nabla \phi_0$ is constant over regions known as Laguerre cells (each cell corresponding to a different atom of the discrete measure), while displaying discontinuities on their boundaries (see \cref{sec: semidiscrete_background} for more details). \cref{fig: laguerre_intro} provides such an example.
Semi-discrete optimal transport therefore provides a natural class of discontinuous transport maps.

\begin{figure}[ht]
	\begin{center}
		\centerline{\includegraphics[width=0.5\columnwidth]{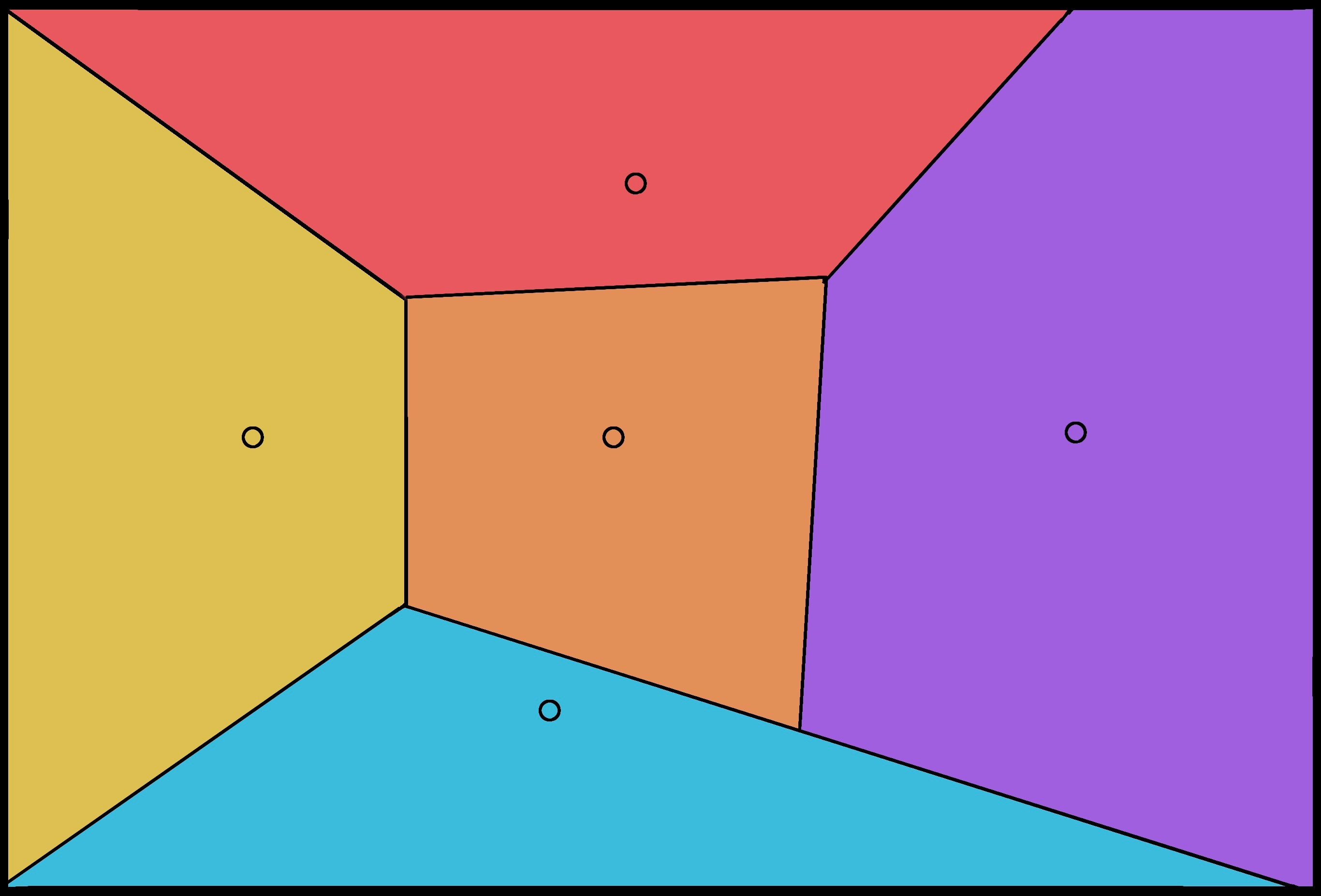}}
		\caption{An illustration of a semi-discrete optimal transport map. The support of $P$, the whole rectangle, is partitioned into regions, each of which is transported to one of the atoms of the discrete target measure $Q$. The resulting map is discontinuous at the boundaries of each cell.}
		\label{fig: laguerre_intro}
	\end{center}
\end{figure}
We focus on this setting for two reasons.
First, it has garnered a lot of attention in recent years, in both computational and theoretical circles~\citep[see, e.g.,][]{merigot_santambrogio_sarrazin, altschuler2021asymptotics,chen2022semidiscrete}, due in particular to its connection with the quantization problem \cite{graf2007foundations}.
Second, the semi-discrete setting is intriguing from a statistical perspective: existing results show that statistical estimation problems involving semi-discrete optimal transport can escape the curse of dimensionality~\cite{ForHutNit19,BarLou19,BarGonLou22,HunStaMun22}.
For example, \citep[Theorem 3.2]{HunStaMun22} show that if $P_n$ and $Q_n$ are empirical measures consisting of i.i.d.\ samples from $P$ and $Q$, then the semi-discrete assumption implies
\begin{equation*}
	\E |S_0(P, Q) - S_0(P_n, Q_n)| \lesssim n^{-1/2}\,.
\end{equation*}
These results offer the tantalizing possibility that semi-discrete transport maps can be estimated at the rate $n^{-1/2}$, in sharp contrast to the dimension-dependent rates obtained in bounds such as~\eqref{1nn_rate}.
However, the optimal rates of estimation for semi-discrete transport maps are not known, and no estimators with finite-sample convergence guarantees exist.

\subsubsection*{\textbf{Main Contributions}}
We show that the computationally efficient estimator $\hat{T}_{\eps}$ based on entropically regularized optimal transport, originally studied in \cite{seguy2017large, pooladian2021entropic}, provably estimates discontinuous semi-discrete optimal transport maps at the optimal rate.
More precisely, our contributions are the following: 
\begin{enumerate}
    \item For $Q$ discrete and $P$ with full support on a compact, convex set, we show that $\hat{T}_\eps$ achieves the following \textit{dimension-independent} convergence rate to the optimal transport map (see \cref{thm: mapest_semidiscrete})
\begin{equation}\label{eq:final_rate_entropic}
     \E \| \hat{T}_{\eps} - \nabla \phi_0\|^2_{L^2(P)} \lesssim n^{-1/2}\,,
\end{equation}
when the regularization parameter $\eps  \asymp n^{-1/2}$.
We further show (\cref{prop:minimax}) that this rate is minimax optimal.
\item As a by-product of our analysis, we give new \textit{parametric} rates of convergence to the entropic Brenier map $T_\eps$, a result which improves exponentially on prior work in the dependence on $\eps$ (see \cref{thm: entmap_semidiscrete_samp_full} and \cref{remark: rigollet2022}).
\item Our proof technique requires several new results, including a novel stability bound for the entropic Brenier maps (\cref{prop: entmap_stability}), and a new stability result for the entropic dual Brenier potentials in the semi-discrete case (\cref{prop:stability_potentials}).
\item
We show that, unlike $\hat T_\ep$, the 1-Nearest-Neighbor estimator is provably suboptimal in the semi-discrete setting (see \cref{prop:1NN_suboptimal}) by exhibiting a discrete measure $Q$ such that the risk suffers from the curse of dimensionality:
\begin{align*}
    \E \|\hat{T}_{\text{1NN}} - \nabla \phi_0\|^2_{L^2(P)} \gtrsim n^{-1/d}\,.
\end{align*}
\item In \cref{sec: num_all}, we verify our theoretical findings on synthetic experiments. We also show by simulation that the entropic estimator appears to perform well even outside the semi-discrete setting, suggesting it as a promising choice for estimating other types of discontinuous maps.
\end{enumerate}
\subsubsection*{Notation}
The Euclidean ball centered at $a$ with radius $r > 0$ is written as $B(a;r)$. The symbols $C$  and $c$ denote positive constants whose value may change from line to line. Write $a\lesssim b$ and $a \asymp b$ if there exist constants $c, C > 0$ such that $a \leq Cb$ and $cb \leq a \leq Cb$, respectively. For an integer $N \in \NN$, we let $[N] \defeq \{1,\ldots, N\}$. For a function $f$ and a probability measure $\rho$, we write $\|f\|_{L^2(\rho)}^2 \defeq \E_{X \sim \rho}\|f(X)\|^2\,.$ Similarly, we write $\text{Var}_\rho(f) \defeq \E_{X\sim \rho}[(f(X) - \E_{X \sim \rho}[f(X)])^2]$ for the variance of $f$ with respect to $\rho$.

\section{Background on optimal transport}\label{sec: background_all}
\subsection{Optimal transport}\label{sec: background_ot}
We define $\cP(\Omega)$ to be the space of probability measures  whose support lies in a compact subset $\Omega \sse \Rd$. If a probability measure $P$ has a density with respect to the Lebesgue measure on $\Rd$ with support $\Omega \sse \Rd$, then we write $P \in \cP_{\text{ac}}(\Omega)$.  

For two probability measures $P,Q \in \cP(\Omega)$, we define the \textit{(squared) $2$-Wasserstein distance} to be \cite{Kan42}
\begin{align}\label{eq: kant_ot_p}
\OT(P,Q) := \min_{\pi \in \Gamma(P,Q)} \iint \tfrac12 \|x - y\|^2 \dd \pi(x,y)\,,
\end{align}
where $\pi \in \Gamma(P,Q) \sse \cP(\Omega \times \Omega)$ such that for any event $A$,
\begin{align*}
\pi(A \times \Omega) = P(A)\,,\quad \pi( \Omega \times A) = Q(A) \,.
\end{align*}
We call $\Gamma(P,Q)$ the set of \textit{couplings} between $P$ and $Q$. In this work, we focus on the squared-Euclidean cost but \cref{eq: kant_ot_p} is well-defined for convex, lower-semicontinuous costs; see \cite{Vil08, San15} for more information on optimal transport under general costs.

\Cref{eq: kant_ot_p} is a convex optimization problem on the space of joint measures, and a minimizer, denoted $\pi_0$, always exists; we call $\pi_0$ an \textit{optimal plan} from $P$ to $Q$. Moreover, \cref{eq: kant_ot_p} possesses the following {dual formulation},
\begin{align}\label{eq: kant_ot_d}
\begin{split}
    \OT(P,Q) &= \tfrac12 M_2(P) + \tfrac12 M_2(Q) -\inf_{(\phi,\psi) \in \Phi} \int \phi \dd P + \int \psi \dd Q 
\end{split}
\end{align}
where $M_2(P) :=  \int \|x\|^2 \dd P(x)$ (similarly for $M_2(Q)$) and the functions $(\phi,\psi) \in \Phi \sse L_1(P) \times L_1(Q)$ satisfy
\begin{align*}
\langle x , y \rangle \leq \phi(x) + \psi(y) \ \text{for all } x,y \in \Omega\,,
\end{align*}
As with the primal formulation, the infimum in \cref{eq: kant_ot_d}  is attained at functions $(\phi_0,\psi_0)$. These minimizers are called \textit{(optimal) Brenier potentials}. In particular, at optimality, we have that these Brenier potentials are convex conjugates of one another, i.e. the Legendre transform of one of the potentials gives the other:
\begin{align}\label{eq: legendre_transform}
\phi_0^*(y) := \sup_x \{ \langle x , y \rangle - \phi_0(x) \} = \psi_0(y)\,,
\end{align}
and vice-versa.

Apart from these two formulations of optimal transport under the squared-Euclidean cost, there exists a third, known as the Monge problem: 
\begin{align}\label{eq: monge_eq}
T_0 := \argmin_{T \in \cT(P,Q)} \int \tfrac12 \|x - T(x)\|^2 \dd P(x)\,,
\end{align}
where $\cT(P,Q)$ is the set of admissible transport maps, i.e. for $X \sim P$, $T(X) \sim Q$. This optimization problem is non-convex in $T$, and a solution is not always guaranteed to exist for arbitrary $P$ and $Q$. 

The following theorem unifies these three formulations of optimal transport under the squared-Euclidean cost:
\begin{theorem}[Brenier's theorem; \citealp{brenier1991polar}] \label{thm: brenier}
Let $P \in \cP_{\text{ac}}(\Omega)$ and let $Q \in \cP(\Omega)$, then
\begin{enumerate}
\item the solution to \cref{eq: monge_eq} exists and is of the form $T_0 = \nabla \phi_0$, where $\phi_0$ solves \cref{eq: kant_ot_d}
\item $\pi_0$ is also uniquely defined as  $$\dd \pi_0(x,y) = \dd P(x) \delta_{\{\nabla \phi_0(x)\}}(y)\,.$$
\end{enumerate}
\end{theorem}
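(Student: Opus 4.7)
The plan is to argue via the Kantorovich dual formulation \eqref{eq: kant_ot_d} and to then recover the Monge map as the gradient of the convex potential it produces. First, I would establish existence of optimal Brenier potentials $(\phi_0, \psi_0)$: using the constraint $\langle x, y \rangle \leq \phi(x) + \psi(y)$, one may restrict without loss of generality to pairs in conjugate form, i.e.\ $\psi_0 = \phi_0^*$ and $\phi_0 = \psi_0^*$ as in \eqref{eq: legendre_transform}, so that both potentials are convex, lower-semicontinuous, and lie in a uniformly Lipschitz family on the compact set $\Omega$. Existence then follows from a direct minimization argument via Arzel\`a-Ascoli and Fatou. By strong duality for \eqref{eq: kant_ot_p}, any dual minimizer $(\phi_0, \psi_0)$ and any primal minimizer $\pi_0 \in \Gamma(P,Q)$ satisfy the complementary slackness condition
\begin{equation*}
\phi_0(x) + \psi_0(y) = \langle x, y \rangle \qquad \text{for }\pi_0\text{-a.e. } (x,y).
\end{equation*}

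The key step is to translate this equality into a subdifferential inclusion. Since feasibility forces $\phi_0(x') + \psi_0(y) \geq \langle x', y \rangle$ for every $x' \in \Omega$, the point $x$ is a global maximizer of $x' \mapsto \langle x', y\rangle - \phi_0(x')$, so $y \in \partial \phi_0(x)$ whenever $(x,y) \in \supp(\pi_0)$, where $\partial$ denotes the convex subdifferential. As $\phi_0$ is convex on $\Omega \sse \Rd$, a classical result shows it is differentiable outside a Lebesgue-negligible set, so by absolute continuity of $P$ it is differentiable $P$-almost everywhere. At each such point, $\partial \phi_0(x) = \{\nabla \phi_0(x)\}$, which forces $y = \nabla \phi_0(x)$. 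Hence $\pi_0$ is concentrated on the graph of $T_0 \defeq \nabla \phi_0$, and since the marginals of $\pi_0$ are $P$ and $Q$ we obtain $(\nabla \phi_0)_\# P = Q$. Therefore $T_0$ is admissible in \eqref{eq: monge_eq}, its transport cost equals the Kantorovich value by complementary slackness, so $T_0$ is optimal; and $\pi_0 = (\id, T_0)_\# P$ is uniquely determined by $\phi_0$.

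For uniqueness of the Monge map itself, any alternative optimizer $\tilde T$ induces the coupling $(\id, \tilde T)_\# P$ which is again optimal, so the argument above applies to it, giving $\tilde T(x) = \nabla \phi_0(x)$ for $P$-a.e.\ $x$. The main technical obstacle is the passage from the subdifferential inclusion $y \in \partial \phi_0(x)$ to the pointwise identity $y = \nabla \phi_0(x)$: this is precisely where absolute continuity of $P$ is used, as it prevents $P$ from charging the set where $\phi_0$ fails to be differentiable, which in general is only known to be Lebesgue-negligible. In the semi-discrete setting studied in the rest of the paper, this non-differentiability set is the union of the boundaries of the Laguerre cells associated with the atoms of $Q$, a finite union of pieces of hyperplanes, so it is manifestly Lebesgue-negligible and hence $P$-negligible.
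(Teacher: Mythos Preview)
The paper does not prove this theorem at all: it is stated as background and attributed to \cite{brenier1991polar}, with no argument given. Your sketch is the standard proof via Kantorovich duality, complementary slackness, and almost-everywhere differentiability of convex functions, and it is correct as written. Since there is no in-paper proof to compare against, there is nothing further to contrast; your closing remark tying the non-differentiability set to Laguerre cell boundaries in the semi-discrete case is a nice touch that anticipates \cref{sec: semidiscrete_background}.
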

When we want to place emphasis on the underlying measures, we will write $\phi_0 = \phi_0^{P\to Q}$, $\psi_0 = \psi_0^{P\to Q}$ and $T_0 = T_0^{P\to Q}$.

\subsubsection{OT in the semi-discrete case}\label{sec: semidiscrete_background}
In optimal transport, the semi-discrete setting refers to the case where $P$ has as density with respect to the Lebesgue measure on $\Rd$, and $Q$ is a discrete measure supported on points. The following theorem characterizes the optimal transport map in this situation, which exhibits a particular structure compared to the general results in the previous section. Let $[J]=\{1,\dots,J\}$.
\begin{proposition}[\citealp{aurenhammer1998minkowski}] If $P\in \cP_{\text{ac}}(\Omega)$ and $Q$ is a discrete measure supported on the points $y_1,\dots,y_J$, then the optimal transport map $\nabla\phi_0$ is given by
\begin{align}
\nabla \phi_0(x) \defeq \argmax_{j \in [J]}\{ \langle x , y_j \rangle - \psi_0(y_j) \}\,,
\end{align}
where $\psi_0$ is the dual to $\phi_0$ in the sense of \cref{eq: legendre_transform}.    
\end{proposition}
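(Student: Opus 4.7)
The plan is to derive this from Brenier's theorem (\cref{thm: brenier}) by exploiting the fact that, in the semi-discrete case, the Legendre transform of a function supported on a finite set is piecewise affine. By \cref{thm: brenier}, the optimal transport map is the gradient of a convex Brenier potential $\phi_0$ solving the dual problem \cref{eq: kant_ot_d}, whose conjugate $\psi_0 = \phi_0^*$ is given by \cref{eq: legendre_transform}. Since $\phi_0$ may be taken convex and lower semi-continuous, biconjugation yields the reverse identity $\phi_0(x) = \sup_{y} \{\langle x,y\rangle - \psi_0(y)\}$.

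Next, the values of $\psi_0$ away from $\{y_1,\dots,y_J\}$ do not affect the dual objective, since $\int \psi_0 \dd Q = \sum_j q_j \psi_0(y_j)$. Extending $\psi_0$ by $+\infty$ outside the support of $Q$ preserves the admissibility constraint $\langle x, y\rangle \leq \phi_0(x) + \psi_0(y)$ trivially for $y$ outside the support, and reduces the Legendre transform to a finite maximum,
\begin{equation*}
\phi_0(x) = \sup_{y \in \R^d}\{\langle x, y\rangle - \psi_0(y)\} = \max_{j \in [J]}\{\langle x, y_j\rangle - \psi_0(y_j)\}.
\end{equation*}
Thus $\phi_0$ is a convex, piecewise affine function, namely the pointwise maximum of $J$ affine functions.

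The third step is to differentiate. At any $x$ for which the maximum above is uniquely attained by some index $j^*(x) \in [J]$, $\phi_0$ coincides in a neighborhood of $x$ with the single affine map $x \mapsto \langle x, y_{j^*(x)}\rangle - \psi_0(y_{j^*(x)})$, and is therefore differentiable at $x$ with $\nabla \phi_0(x) = y_{j^*(x)}$, which is exactly the claimed identity. The complementary set of ties is contained in the finite union of affine hyperplanes $\{x : \langle x, y_j - y_k\rangle = \psi_0(y_j) - \psi_0(y_k)\}$ over $j \neq k$, hence has zero Lebesgue measure. Because $P \in \cP_{\text{ac}}(\Omega)$, this set also has $P$-measure zero, so the formula holds $P$-almost everywhere.

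The only point requiring any care is to ensure that the optimal $\psi_0$ is finite at each atom $y_j$ (so that the affine pieces are well-defined), which follows from the standard duality theory for OT under the compactness of $\Omega$. The argument is essentially convex-analytic: no real obstacle arises, as the result is simply Brenier's theorem specialized to the case in which the target is finitely supported.
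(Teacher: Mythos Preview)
The paper does not actually prove this proposition; it is quoted with attribution to \citep{aurenhammer1998minkowski} and used as background. There is therefore no paper proof to compare against.

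Your derivation is correct and is the standard convex-analytic route: from Brenier's theorem one takes $\phi_0$ convex l.s.c.\ with $\psi_0=\phi_0^*$, observes that only the values $\psi_0(y_1),\dots,\psi_0(y_J)$ matter for the dual objective so one may set $\psi_0\equiv+\infty$ off $\{y_1,\dots,y_J\}$, and then $\phi_0=\psi_0^*$ collapses to the finite max of affine functions, whose gradient at a point of unique maximizer $j^*(x)$ is $y_{j^*(x)}$. The tie set is a finite union of hyperplanes and hence $P$-null. One cosmetic remark: the proposition as written has $\nabla\phi_0(x)$ equal to an $\argmax$ over indices $j\in[J]$, which literally returns an index rather than a point; your formulation $\nabla\phi_0(x)=y_{j^*(x)}$ is the intended meaning, and this is consistent with the paper's subsequent usage.
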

Here, the optimal dual Brenier potential $\psi_0$ can be identified with a \textit{vector} in $\R^J$, defined by the number of atoms, and the optimal Brenier potential is consequently given by $$\phi_0 \defeq \max_{j \in [J]}\{\langle x , y_j \rangle - \psi_0(y_j) \}\,.$$
Although $\phi_0$ is not differentiable, only subdifferentiable, we still use the gradient notation as $\nabla \phi_0$ is well-defined $P$-almost everywhere. 

The map $\nabla \phi_0$ partitions the space into $J$ convex polytopes $L_j \defeq \nabla\phi_0^{-1}(\{y_j\})$ called \emph{Laguerre cells}; recall \Cref{fig: laguerre_intro}. From this definition, it is clear that for a given $x \in L_j$, $x \mapsto \nabla\phi_0(x) = y_j$ is the optimal transport mapping. The difficulty in finding this map lies in determining the cells $L_j$, or equivalently the dual variables $\psi_0(y_j)$.

\subsection{Entropic optimal transport} \label{sec: background_eot}
Entropic regularization was introduced to both optimal transport and machine learning communities in the seminal paper by \citep{cuturi2013sinkhorn}, allowing approximate optimal transport distances to be computed at unprecedented speeds. 
Entropic optimal transport (EOT) is defined as the following regularized version of \cref{eq: kant_ot_p}: for $\eps > 0$
\begin{align}\label{eq: kant_eot_p}
\begin{split}
    \OTep(P,Q) := \min_{\pi \in \Gamma(P,Q)} &\iint \tfrac12 \|x-y\|^2 \dd \pi(x,y) + \eps \kl{\pi}{P\otimes Q}\,,
\end{split}
\end{align}
where $\kl{\mu}{\nu} = \int \log \frac{\dd \mu}{\dd \nu} \dd \mu$ when $\mu\in \cP(\Omega)$ is absolutely continuous with respect to $\nu\in \cP(\Omega)$. 
This speedup is due to the elegant connection of \eqref{eq: kant_eot_p} to Sinkhorn's algorithm; we refer the interested reader to \citep[Chapter 4]{PeyCut19} for more information. 
The computational tractability of $\OTep$ compared to $\OT$ when dealing with many samples lends itself to being a central object of study in its own right~\citep[see, e.g.,][]{genevay2019sample, mena2019statistical, chizat2020faster, rigollet2022sample,gonzalez2022weak}.

\Cref{eq: kant_eot_p} admits the following dual formulation, which is now an unconstrained optimization problem \cite{genevay2019entropy,marino2020optimal}
\begin{equation}\label{eq: kant_eot_d}
\begin{split}
\OTep(P,Q) =\tfrac12 M_2(P) + \tfrac12 M_2(Q) - &\inf_{ \phi,\psi } \bigg(\int \phi \dd P + \int \psi \dd Q \\
    &\quad + \eps \hspace{-.05cm} \iint (e^{(\langle x, y \rangle - \phi(x) - \psi(y))/\eps }-1) \dd P(x) \dd Q(y)\bigg),
    \end{split}
\end{equation}
where $(\phi,\psi) \in L_1(P) \times L_1(Q)$. When $P$ and $Q$ have finite second moments, \cref{eq: kant_eot_p} admits a \textit{unique} minimizer, $\pi_\eps$ and we have the existence of minimizers to \cref{eq: kant_eot_d}, which we denote as $(\phi_\eps,\psi_\eps)$. We call $\pi_\eps$ the \textit{entropic optimal plan} and $(\phi_\eps,\psi_\eps)$ are called \textit{entropic Brenier potentials}. The following optimality relation further relates these primal and dual solutions \cite{Csi75}:
\begin{align*}
\dd\pi_\eps(x,y) := e^{(\langle x , y \rangle - \phi_\eps(x) - \psi_\eps(y))/\eps} \dd P(x) \dd Q(y)\,.
\end{align*}
As a consequence, the following relationship holds at optimality:
\begin{align*}
\OTep(P,Q) =  \tfrac12 M_2(P) + \tfrac12 M_2(Q)-\hspace{-.1cm}\int \phi_\eps \dd P - \hspace{-.1cm} \int \psi_\eps \dd Q\,, 
\end{align*}
and, moreover, we can define versions of $\phi_\eps$ and $\psi_\eps$ such that the following relationships hold \citep[see][]{mena2019statistical, nutz2021entropic} over all $x \in \Rd$ and $y \in \Rd$, respectively:
\begin{align}
& \phi_\eps(x) = \eps \log \int e^{(\langle x , y \rangle - \psi_\eps(y))/\eps} \dd Q(y)\,, \label{eq: opt_cond_1} \\
& \psi_\eps(y) = \eps \log \int e^{(\langle x , y \rangle - \phi_\eps(x))/\eps} \dd P(x)\,, \label{eq: opt_cond_2}
\end{align}
which are smoothed version of the Legendre transform, see \Cref{sec:reminder} for details.
In what follows, we always assume that we have selected $\phi_\eps$ and $\psi_\eps$ so that these identities hold.

\subsubsection{Entropic Brenier Map}\label{sec:entropic_brenier_map}
If $(X,Y)\sim\pi_\eps$, we may define the conditional probability $\pi^x_\eps$ of $Y$ given that $X=x$, with density
\begin{equation}\label{eq:cond_density}
    \frac{\dd \pi^x_\eps}{\dd Q}(y) \propto \exp\left((\langle x,y\rangle -\psi_\eps(y))/\eps\right)\,.
\end{equation}
The barycentric projection of the optimal entropic coupling $\pi_\eps$, or \textit{entropic Brenier map}, is a central object of study in several works e.g. \cite{goldfeld2022limit,pooladian2021entropic,del2022improved,rigollet2022sample}, defined as
\begin{align}
T_\eps(x)  =\hspace{-.05cm}  \int \hspace{-.05cm} y \dd \pi^x_\eps(y) &=  \nabla \phi_\eps(x)\,, \label{eq: map_def}
\end{align}
where $\pi_\eps^x$ is as in \cref{eq:cond_density}. Note that this quantity is well defined for all $x \in \Rd$ as long as the source and target measures have compact support; in particular, it applies to both discrete and continuous measures.
The second equality follows from \cref{eq: opt_cond_1} and the dominated convergence theorem.
As in the unregularized case, we will write  $\phi_\eps = \phi_\eps^{P\to Q}$, $\psi_\eps = \psi_\eps^{P\to Q}$ and $T_\eps = T_\eps^{P\to Q}$ when we want to emphasize on the  dependency with respect to the underlying measures.

This particular barycentric projection was proposed as a tool for large-scale optimal transport by \citep{seguy2017large}, but analyzed statistically for the first time by \citep{pooladian2021entropic} as an estimator for the optimal transport map. We mention some of their results to highlight the differences with our new results for the semi-discrete setting in \Cref{sec:main_results}. First, they prove the following approximation result for $T_\eps$.

\begin{proposition}[{\citealp[Corollary 1]{pooladian2021entropic}}]\label{thm: pnw_main_thm}
    Let $P,Q$ be compactly supported absolutely continuous measures on a compact set $\Omega \sse \Rd$ with densities $p$ and $q$, that are bounded away from $0$ and $\infty$. Assume that $\phi_0$ is smooth and strongly convex, and that $\phi_0^*$ is at least $\cC^3$. Then,
    \begin{align}\label{eq: pnw_main_thm}
        \| T_\eps - \nabla \phi_0 \|^2_{L^2(P)} \lesssim \eps^2 \,.
    \end{align}
\end{proposition}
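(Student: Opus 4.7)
The plan is to obtain a pointwise bound $|T_\eps(x) - \nabla\phi_0(x)| \lesssim \eps$ via a Laplace-type asymptotic expansion of the conditional measure $\pi_\eps^x$, and then square and integrate against $P$. Since $T_\eps(x) = \int y \dd\pi_\eps^x(y)$ and $\nabla\phi_0(x) = T_0(x)$, we have the exact identity
\begin{align*}
T_\eps(x) - \nabla\phi_0(x) = \int \bigl(y - T_0(x)\bigr) \dd\pi_\eps^x(y),
\end{align*}
and by \eqref{eq:cond_density} the measure $\pi_\eps^x$ has density proportional to $\exp\bigl(-g_x^\eps(y)/\eps\bigr)$ against $Q$, where $g_x^\eps(y) \defeq \psi_\eps(y) - \langle x, y\rangle$. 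Under the stated smoothness and strong convexity of $\phi_0$, the Legendre dual $\psi_0 = \phi_0^*$ is also smooth and strongly convex, and the unregularized cost $g_x^0(y) \defeq \psi_0(y) - \langle x, y \rangle$ has a unique interior minimizer at $y = T_0(x)$ with positive-definite Hessian $\nabla^2\psi_0(T_0(x))$. Thus $\pi_\eps^x$ should concentrate around $T_0(x)$ at scale $\sqrt{\eps}$, and the question is to control the bias of this concentration.

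The first step is a quantitative stability estimate for the entropic potentials of the form $\|\psi_\eps - \psi_0\|_\infty \lesssim \eps$. Under the assumed regularity this can be obtained by applying a Laplace expansion to the fixed-point relation \eqref{eq: opt_cond_2}, which expresses $\psi_\eps$ as an $\eps$-smoothing of the Legendre transform that defines $\psi_0$; strong convexity of $\phi_0$ controls the Hessian of the integrand, and a contraction argument on the Sinkhorn map promotes a pointwise discrepancy to a uniform one. The second step is the Laplace expansion itself. Rescaling $y = T_0(x) + \sqrt{\eps}\, z$ and Taylor expanding $g_x^\eps$ to third order around $T_0(x)$, using the $\cC^3$ regularity of $\psi_0$ together with the stability bound above, the density of $\pi_\eps^x$ in the variable $z$ is, up to lower-order errors, a Gaussian with mean zero and covariance $(\nabla^2\psi_0(T_0(x)))^{-1}$, multiplied by the slowly varying density $q$ of $Q$ evaluated at $T_0(x) + \sqrt{\eps}\, z$. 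The integral $\sqrt{\eps}\int z$ against this centered Gaussian vanishes by symmetry, so the first nonzero contribution is of order $\eps$ and comes from (i) the cubic Taylor term of $\psi_0$ at $T_0(x)$ and (ii) the linearization of $\log q$; together these yield the pointwise estimate $|T_\eps(x) - \nabla\phi_0(x)| \leq C\eps$, uniform in $x$ for which $T_0(x)$ is bounded away from $\partial\Omega$.

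The main obstacle is making this Laplace expansion rigorous and uniform in $x$. Two issues must be addressed. First, one must control the non-Gaussian remainder after Taylor expansion in the exponent; since $g_x^\eps$ is strongly convex (inherited from $g_x^0$ via the stability bound), $\pi_\eps^x$ has subgaussian tails at scale $\sqrt{\eps}$, so the contribution of $\{|z|\gtrsim \sqrt{\log(1/\eps)}\}$ is negligible and the cubic remainder integrates against Gaussian moments giving an $O(\eps)$ correction as expected. Second, since $Q$ is compactly supported on $\Omega$, when $T_0(x)$ is within distance $\sqrt{\eps \log(1/\eps)}$ of $\partial\Omega$ the Gaussian approximation is truncated asymmetrically and the symmetry argument may fail; however, the set of such $x$ has vanishing $P$-measure as $\eps\to 0$ and the pointwise error there is a priori bounded by the diameter of $\Omega$, so its contribution to the squared $L^2(P)$ norm is of order $\eps^2$. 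Combining the uniform interior bound with this boundary correction and integrating against $P$ yields $\|T_\eps - \nabla\phi_0\|_{L^2(P)}^2 \lesssim \eps^2$, as claimed.
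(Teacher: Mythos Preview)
This proposition is not proved in the paper under review; it is quoted verbatim as background from \cite{pooladian2021entropic} (their Corollary~1), so there is no in-paper proof to compare your sketch against. I therefore evaluate the proposal on its own merits.

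The Laplace-expansion strategy is natural, and the interior part of your argument is essentially correct: strong convexity of $\psi_0=\phi_0^*$ forces $\pi_\eps^x$ to concentrate at scale $\sqrt{\eps}$ around $T_0(x)$, the centered Gaussian term integrates to zero by parity, and the leading non-vanishing correction---from the cubic Taylor term of $\psi_0$ and the linearization of $\log q$---is indeed $O(\eps)$, uniformly for $x$ with $T_0(x)$ well inside $\Omega$.

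The gap is in your boundary treatment. The set $\{x : d(T_0(x),\partial\Omega)\le \sqrt{\eps\log(1/\eps)}\}$ is the $T_0$-preimage of a tube of width $\sqrt{\eps\log(1/\eps)}$ around $\partial\Omega$; since $T_0$ is bi-Lipschitz (smoothness and strong convexity of $\phi_0$), this set has $P$-measure of order $\sqrt{\eps\log(1/\eps)}$, not $O(\eps^2)$. Combined with the crude pointwise bound $\|T_\eps(x)-T_0(x)\|\le \mathrm{diam}(\Omega)$, the contribution to $\|T_\eps-T_0\|_{L^2(P)}^2$ from this region is $O(\sqrt{\eps\log(1/\eps)})$, which swamps the target $\eps^2$. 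Even the sharper near-boundary estimate $\|T_\eps(x)-T_0(x)\|\lesssim \sqrt{\eps}$ that a truncated-Gaussian computation would give only improves this to $O(\eps^{3/2}\sqrt{\log(1/\eps)})$, still far from $\eps^2$. To recover the stated rate one must either impose an additional hypothesis keeping $T_0(\supp P)$ uniformly away from $\partial\Omega$, or replace the pointwise Laplace expansion by a global argument (e.g.\ via strong convexity of $\phi_0$ and a quantitative expansion of the entropic suboptimality) that does not require splitting into interior and boundary regions. As written, the proposal does not close.
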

Their main statistical result is the following theorem:
\begin{proposition}[{\citealp[Theorem 3]{pooladian2021entropic}}]\label{thm: pnw_stat_thm}
    Suppose the same assumptions as \cref{thm: pnw_main_thm}, and let $P_n$ and $Q_n$ denote the empirical measures of $P$ and $Q$ constructed from i.i.d.~samples. Let $\hat{T}_\eps = T_\eps^{P_n \to Q_n}$ denote the entropic Brenier map from $P_n$ to $Q_n$ and let $T_0 = \nabla \phi_0$ be the optimal transport map from $P$ to $Q$. Then, if $\eps \asymp n^{-\frac{1}{d' + 3}}$
    \begin{align}\label{eq: pnw_stat_thm}
       \E \| \hat{T}_{\eps} - T_0 \|^2_{L^2(P)} \lesssim n^{-\frac{3}{2(d' + 3)}}\log(n)\,,
    \end{align}
    where $d' = 2 \lceil d/2 \rceil$.
\end{proposition}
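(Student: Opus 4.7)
My plan is to carry out a bias/variance decomposition. By the triangle inequality,
\begin{align*}
\E\|\hat T_\eps - \nabla\phi_0\|_{L^2(P)}^2 \leq 2\|T_\eps - \nabla\phi_0\|_{L^2(P)}^2 + 2\,\E\|\hat T_\eps - T_\eps\|_{L^2(P)}^2\,,
\end{align*}
where $T_\eps = T_\eps^{P\to Q}$ is the population entropic Brenier map. Under the stated smoothness and strong convexity assumptions on $\phi_0$, \cref{thm: pnw_main_thm} controls the bias by $O(\eps^2)$, so the task reduces to bounding the statistical fluctuation term and then optimizing over $\eps$.

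For the statistical term I would aim for a stability estimate of the form
\begin{align*}
\|T_\eps^{P_n\to Q_n} - T_\eps^{P\to Q}\|_{L^2(P)}^2 \lesssim \eps^{-\beta}\cdot\Delta_\eps(P_n,Q_n;P,Q)\,,
\end{align*}
where $\Delta_\eps$ is a Sinkhorn-divergence-type discrepancy and $\beta$ is an explicit small exponent. Writing $\hat T_\eps(x) - T_\eps(x)$ as an integral against the difference of the conditional couplings from \cref{eq:cond_density}, one can convert the squared $L^2$ gap into a KL-type distance between the two entropic couplings via Pinsker's inequality. That KL distance can then be re-expressed in terms of the Sinkhorn divergences $S_\eps$ evaluated at the pairs of empirical and population measures, via a cross-term telescoping $S_\eps(P_n,Q_n) - S_\eps(P_n,Q) - S_\eps(P,Q_n) + S_\eps(P,Q)$. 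The factor $\eps^{-\beta}$ enters when one differentiates the exponential kernel defining $\pi_\eps^x$.

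Taking expectations, $\E[\Delta_\eps(P_n,Q_n;P,Q)]$ is controlled by the known sample complexity of entropic OT, which scales like $n^{-1/2}\cdot\mathrm{poly}(1/\eps)$, with the polynomial exponent governed by the smoothness class of the dual potentials $\phi_\eps,\psi_\eps$ on $\Omega\subset\Rd$. The relevant exponent involves $d' = 2\lceil d/2\rceil$, reflecting Sobolev embeddings in even dimensions. The combined bound takes the form $\eps^2 + n^{-1/2}\eps^{-\gamma}$ for an explicit $\gamma=\gamma(d')$, and the choice $\eps\asymp n^{-1/(d'+3)}$ in the statement is precisely the optimizer producing the target rate $n^{-3/(2(d'+3))}\log n$.

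The main obstacle is the stability step. A crude bound that passes through the Hessian of $\phi_\eps$ would introduce an $\eps^{-2}$ factor, which is too lossy to yield the advertised polynomial dependence on $\eps$. Getting the correct exponent requires exploiting cancellations specific to the exponential family structure of $\pi_\eps^x$, for instance by rewriting the map difference as a conditional covariance of $\pi_\eps^x$ and using Hölder-type inequalities tailored to this exponential form to avoid squaring the $1/\eps$ loss. The subsequent sample-complexity step is, by comparison, fairly routine once sharp smoothness estimates on $\phi_\eps,\psi_\eps$ are in hand.
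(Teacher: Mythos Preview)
This proposition is not proved in the present paper; it is quoted as \citep[Theorem~3]{pooladian2021entropic} and appears only as background to contrast with the new semi-discrete results. There is therefore no proof in this paper to compare your proposal against.

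For what it is worth, your high-level outline --- a bias-variance split, the $O(\eps^2)$ bias from \cref{thm: pnw_main_thm}, and a fluctuation bound for $\hat T_\eps - T_\eps$ obtained by combining a stability estimate for the entropic map with the sample-complexity bounds for entropic potentials --- is indeed the structure of the argument in the cited reference. The appearance of $d'=2\lceil d/2\rceil$ does come, as you say, from the Sobolev-norm control of the entropic potentials in \cite{genevay2019sample,mena2019statistical}. Your description of the stability step is somewhat loose: the original argument does not go through Pinsker plus a cross-telescoped Sinkhorn divergence, but rather through a direct suboptimality inequality for the entropic dual (closer in spirit to \cref{prop: entmap_stability} here), which avoids the ``crude $\eps^{-2}$'' loss you worry about without the covariance rewriting you propose. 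But since the paper itself does not supply a proof, this is a comparison with the external reference rather than with anything in the manuscript.
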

Note that in particular the the rate of convergence of the entropic estimator critically depends on the ambient dimension $d$ in the continuous-to-continuous case.

\subsubsection{Related work}
Characterizing the convergence of entropic objects (e.g. potentials, cost, plans) to their unregularized counterparts in the $\eps \to 0$ regime has been a topic of several works in recent years.
Convergence of the costs $\OTep$ to $\OT$ with precise rates was investigated in \cite{pal2019difference,chizat2020faster,conforti2021formula}. The works \cite{carlier2017convergence, leonard2012schrodinger, bernton2021entropic, ghosal2021stability} study the convergence of the minimizers $\pi_\eps$ to $\pi_0$ under varying assumptions. Convergence of the potentials in a very general setting was established in \cite{nutz2021entropic}, though without a rate of convergence in $\eps$. In the semi-discrete case, this gap was closed in \cite{altschuler2021asymptotics} followed closely by \cite{delalande2021nearly}, which gave non-asymptotic rates. The Sinkhorn Divergence, a non-negative, symmetric version of $\OTep$, was introduced in \cite{genevay2018learning}, was statistically analysed in ~\cite{goldfeld2022limit} and also in \cite{gonzalez2022weak,del2022improved}, and was connected to the entropic Brenier map in \cite{pooladian2022debiaser}. The recent pre-print by \cite{rigollet2022sample} proved parametric rates of estimation between the empirical entropic Brenier map and its population counterpart, though with an exponentially poor dependence on the regularization parameter (see \cref{remark: rigollet2022}). 
Using covariance inequalities, the entropic Brenier potentials were used give a new proof of Caffarelli's contraction theorem; see \cite{chewi2022entropic}; this approach was recently generalized in \cite{conforti2022weak}. Entropic optimal transport has also come into contact with the area of deep generative modelling through the following works \cite{finlay2020learning,de2021diffusion}, among others.

\section{Statistical performance of the entropic estimator in the semi-discrete setting}\label{sec:main_results}
Let $P_n$ and $Q_n$ be the empirical measures associated with two $n$-samples from $P$ and $Q$. 
We make the following regularity assumptions on $P$, already introduced by \citep{delalande2021nearly}.

\begin{itemize}
\item[\textbf{(A)}] The measure $P$ has a compact convex support $\Omega \subseteq B(0;R)$, with a density $p$ satisfying $0<p_{\min}\leq p \leq p_{\max}<\infty$ for positive constants $p_{\min}$, $p_{\max}$ and $R$.
\end{itemize}
For example, $P$ can be the uniform distribution over $\Omega$, or a truncated Gaussian distribution. Furthermore, we will need the following assumption on $Q$.
\begin{itemize}
    \item[\textbf{(B)}] The discrete probability measure $Q=\sum_{j=1}^J q_j \delta_{y_j}$ is such that $q_j\geq q_{\min}>0$ and $y_j\in B(0;R)$ for all $j\in [J]$.
\end{itemize}

The goal of this section is to prove the following theorem:
\begin{theorem}\label{thm: mapest_semidiscrete}
Let $P$ satisfy \textbf{(A)} and let $Q$ satisfy \textbf{(B)}. Let $\hat T_\eps = T_\eps^{P_n \to Q_n}$.
Then, for $\eps \asymp n^{-1/2}$ and $n$ large enough,
    \begin{align}
        \E\| \hat{T}_\eps - T_0\|^2_{L^2(P)} \lesssim n^{-1/2}\,.
    \end{align}
\end{theorem}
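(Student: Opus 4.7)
The plan is to use the bias-variance decomposition
\begin{equation*}
\E\|\hat T_\eps - T_0\|_{L^2(P)}^2 \;\leq\; 2\,\E\|\hat T_\eps - T_\eps\|_{L^2(P)}^2 \;+\; 2\,\|T_\eps - T_0\|_{L^2(P)}^2,
\end{equation*}
where $T_\eps \defeq T_\eps^{P\to Q}$ is the population-level entropic Brenier map. The goal is to show that the bias term is $O(\eps)$ and that the variance term is $O(n^{-1/2})$ with only mild dependence on $\eps$. Choosing $\eps \asymp n^{-1/2}$ then balances the two contributions and yields the announced rate.

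For the bias, I would exploit the explicit formula: $T_\eps(x)$ is a softmax-weighted average of the target atoms $y_j$, with weights proportional to $q_j \exp((\langle x, y_j\rangle - \psi_\eps(y_j))/\eps)$. Using the semi-discrete convergence $\psi_\eps\to\psi_0$ (a consequence of \cref{prop:stability_potentials}), one checks that for $x$ at distance $r$ from the boundary of the Laguerre cell containing it, the weight corresponding to the true image $T_0(x)$ dominates all others by an exponential factor $e^{cr/\eps}$, so that $|T_\eps(x)-T_0(x)|\lesssim e^{-cr/\eps}$. Combining this exponential decay with the crude bound $|T_\eps - T_0| \leq 2R$ and the observation that, under \textbf{(A)}, the $P$-mass of an $\eps$-tube around the union of Laguerre-cell boundaries is $O(\eps)$, one integrates to obtain $\|T_\eps - T_0\|_{L^2(P)}^2 \lesssim \eps$.

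The crux is the variance term. I would combine two stability estimates. First, the map-stability result \cref{prop: entmap_stability} controls $\|\hat T_\eps - T_\eps\|_{L^2(P)}$ by the distance between the empirical and population dual vectors $\hat\psi_\eps$ and $\psi_\eps$, which both live in $\R^J$ as soon as $Q_n$ hits every atom of $Q$ (an event of overwhelming probability under \textbf{(B)} for $n$ large enough). Second, the semi-discrete potential-stability bound \cref{prop:stability_potentials} converts a perturbation of the dual first-order optimality conditions into a bound on $\|\hat\psi_\eps - \psi_\eps\|$. The relevant optimality conditions are empirical means of bounded test functions of the form $x\mapsto e^{(\langle x,y_j\rangle - \phi_\eps(x))/\eps}$, so Hoeffding (or a uniform argument over the finite index set $[J]$) gives $\eps$-uniform concentration at the parametric rate $n^{-1/2}$. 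Together, these pieces produce $\E\|\hat T_\eps - T_\eps\|_{L^2(P)}^2 \lesssim n^{-1/2}$.

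The main obstacle is keeping the $\eps$-dependence of the stability constants under control. The analogous continuous-to-continuous argument (\cref{thm: pnw_stat_thm}) is burdened by a stability constant that is exponentially bad in $\eps^{-1}$, which forces $\eps$ to be chosen large and degrades the rate to a dimension-dependent one. Here, the semi-discrete structure — namely, the finite dimensionality of the dual problem together with a uniform strong-convexity estimate for the entropic semi-discrete dual functional in the spirit of \cite{delalande2021nearly} — should yield stability constants that are at worst polynomial in $\eps^{-1}$. This is precisely what is needed so that the choice $\eps\asymp n^{-1/2}$ delivers the optimal parametric rate $n^{-1/2}$, independently of the ambient dimension.
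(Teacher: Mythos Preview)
Your overall plan --- bias-variance split, $O(\eps)$ bias via a near-boundary analysis, and a variance bound obtained by chaining map stability (\cref{prop: entmap_stability}) with potential stability (\cref{prop:stability_potentials}) --- matches the paper's route. However, the variance argument as you state it has a genuine gap in the bookkeeping that, if not repaired, yields the wrong rate.

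\textbf{The variance bound is $\eps^{-1}n^{-1}$, not $n^{-1/2}$.} You write that Hoeffding gives ``$\eps$-uniform concentration at the parametric rate $n^{-1/2}$'' and that ``together, these pieces produce $\E\|\hat T_\eps - T_\eps\|^2 \lesssim n^{-1/2}$.'' But \cref{prop: entmap_stability} carries an explicit $\eps^{-1}$ prefactor: it bounds $\|T-T'\|^2_{L^2(\mu)}$ by $\eps^{-1}$ times a potential integral plus a KL term. If all you feed into the right-hand side is a $n^{-1/2}$ concentration bound, you obtain at best $\eps^{-1}n^{-1/2}$ for the variance, and balancing with the $O(\eps)$ bias forces $\eps\asymp n^{-1/4}$, giving only a $n^{-1/4}$ rate. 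What actually makes the argument work is the \emph{bilinear} structure of the right-hand side in \cref{cor:same_mu}: the leading term is $\int(\psi'-\psi)\,\dd(\nu-\nu')$, which is a product of two quantities each of size $n^{-1/2}$, hence $n^{-1}$. Concretely, \cref{prop:stability_potentials} gives $\mathrm{Var}_Q(\hat\psi_\eps-\psi_\eps)\lesssim \chis{Q_n}{Q}$, and $\E\chis{Q_n}{Q}=(J-1)/n$, so the whole variance term is $\lesssim \eps^{-1}n^{-1}$ (\cref{thm: entmap_semidiscrete_samp_full}). Only then does $\eps\asymp n^{-1/2}$ balance to $n^{-1/2}$.

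\textbf{\cref{prop:stability_potentials} does not handle the source perturbation.} That proposition compares $\psi_\eps^{\mu\to\nu}$ and $\psi_\eps^{\mu\to\nu'}$ for a \emph{fixed} source $\mu$; it gives nothing directly for $P\to P_n$. The paper treats the two-sample case by splitting $\hat T_\eps - T_\eps$ through the intermediate $T_\eps^{P\to Q_n}$, handling the $Q\to Q_n$ leg with \cref{prop:stability_potentials} and the $P\to P_n$ leg with a separate localization/empirical-process argument (\cref{prop:bound_potentials_two_sample}) built on the strong convexity of the semi-discrete dual (\cref{prop:strong_convexity_F}). Your sketch conflates these two legs.

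\textbf{Minor misattribution.} The convergence $\psi_\eps\to\psi_0$ used in the bias bound is not a consequence of \cref{prop:stability_potentials}; it comes from \cite{altschuler2021asymptotics} (see \eqref{eq:altchuler_lemma}), which gives the quantitative bound $\|\tilde\psi_\eps-\psi_0\|_\infty\lesssim\eps$ needed for the near-boundary argument.
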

\begin{remark}We remark that the hidden constants in \cref{thm: entmap_semidiscrete_samp_full} and related results depend on $J, p_{\min}, p_{\max}, q_{\min}$ and $R$.
\end{remark}

\begin{remark}[Fixing the support via rounding]
At present, the entropic map need not necessarily map exactly to one of $\{y_1,\ldots,y_J\}$. In fact, $ \hat{T}_\eps : \R^d \to \text{conv}(\{Y_1,\ldots,Y_n\})$, where $\text{conv}(A)$ is the convex hull for some set $A$. In turn, the support of the entropic map does not in general match that of $Q$. However, this can be readily fixed with a rounding scheme.
We can replace our estimator by $\bar T_\eps$ which is obtained by mapping the output of $\hat T_\eps$ to its nearest neighbor in the support of $Q$ -- this projection step is easy to compute, given that we essentially know the support of $Q$ via samples. 
By viewing this as a projection onto an appropriate set (namely, the set of transport maps with codomain equal to the support of $Q$), and applying the triangle inequality, it holds that
\begin{align*}
 \E \|\bar{T}_\eps - T_0\|^2_{L^2(P)} \leq 2 \E \|\hat{T}_\eps - T_0\|^2_{L^2(P)}    
\end{align*}
but $\bar{T}_\eps$ matches the support of $Q$. 
 \end{remark}

Let $T_\eps= T_\eps^{P\to Q}$ denote the entropic Brenier map associated to $P$ and $Q$. 
Our proof relies on the following bias-variance decomposition:
\begin{align*}
    \E\|\hat{T}_{\eps} - T_0\|^2_{L^2(P)}\hspace{-.05cm} &\lesssim \E\|\hat{T}_{\eps} - T_\eps\|^2_{L^2(P)} \hspace{-.05cm} +\hspace{-.05cm} \|T_\eps - T_0\|^2_{L^2(P)}\hspace{-.05cm} \,.
\end{align*}
Following the next two results (\cref{thm: mapest_semidiscrete_population} and \cref{thm: entmap_semidiscrete_samp_full}) and the preceding decomposition, the proof of \Cref{thm: mapest_semidiscrete} is merely a balancing act in the regularization parameter $\eps$.
\begin{theorem}\label{thm: mapest_semidiscrete_population}
    Let $P$ satisfy \textbf{(A)} and let $Q$ satisfy \textbf{(B)}. Then, for $\eps$ small enough,
    \begin{align}
        \|T_\eps - T_0\|^2_{L^2(P)} \lesssim \eps\,. 
    \end{align}
\end{theorem}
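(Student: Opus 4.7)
My plan is to exploit the closed-form softmax structure of $T_\eps$ in the semi-discrete case and control its deviation from $T_0$ via a tubular-neighborhood argument around the Laguerre cell boundaries. Writing $k(x)=\argmax_j\{\langle x,y_j\rangle-\psi_0(y_j)\}$ so that $T_0(x)=y_{k(x)}$, the conditional density formula~\eqref{eq:cond_density} combined with the discreteness of $Q$ gives
\begin{equation*}
T_\eps(x) = \sum_{j=1}^J w_j(x)\, y_j,\qquad w_j(x) \defeq \frac{q_j \exp\bigl((\langle x,y_j\rangle - \psi_\eps(y_j))/\eps\bigr)}{\sum_{l} q_l \exp\bigl((\langle x,y_l\rangle - \psi_\eps(y_l))/\eps\bigr)}.
\end{equation*}
Intuitively, for $x$ well inside $L_{k(x)}$ the softmax concentrates sharply on $y_{k(x)}$ and $\|T_\eps(x) - T_0(x)\|$ is exponentially small in $\eps$; the dominant contribution to the $L^2(P)$ error must therefore come from a width-$O(\eps)$ tube around the Laguerre boundaries, which has $P$-measure $O(\eps)$ under assumption \textbf{(A)}.

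To implement this, I would first invoke the semi-discrete potential stability $\|\psi_\eps-\psi_0\|_\infty\lesssim\eps$ established in \cite{altschuler2021asymptotics,delalande2021nearly}. Lower-bounding the denominator of $w_j(x)$ by its $k(x)$-th term and absorbing the $O(\eps)$ shift in the potentials into a bounded prefactor converts the relevant exponent into the unregularized margin
\begin{equation*}
\Delta_{kj}(x) \defeq \bigl[\langle x,y_k\rangle - \psi_0(y_k)\bigr] - \bigl[\langle x,y_j\rangle - \psi_0(y_j)\bigr],
\end{equation*}
yielding $w_j(x) \lesssim \exp\bigl(-\Delta_{k(x)j}(x)/\eps\bigr)$, where $\Delta_{k(x)j}(x)\ge 0$ by definition of $k(x)$. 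Combining with $\|y_j-y_{k(x)}\|\le 2R$ and summing over the $J-1$ competing atoms,
\begin{equation*}
\|T_\eps(x) - T_0(x)\|^2 \;\lesssim\; \sum_{j\neq k(x)} \exp\bigl(-2\Delta_{k(x)j}(x)/\eps\bigr).
\end{equation*}

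Finally, I would integrate pair by pair by partitioning $\Omega$ into the cells $L_k$:
\begin{equation*}
\|T_\eps - T_0\|_{L^2(P)}^2 \;\lesssim\; \sum_{k}\sum_{j\neq k} \int_{L_k} \exp\bigl(-2\Delta_{kj}(x)/\eps\bigr) \dd P(x).
\end{equation*}
Since $\Delta_{kj}$ is affine with non-zero gradient $y_k-y_j$ and $P$ has density bounded by $p_{\max}$ on the compact convex set $\Omega$, the slab estimate $P\{x\in L_k : \Delta_{kj}(x)\le t\}\lesssim t$ holds uniformly for $t\ge 0$; a layer-cake integration then produces
\begin{equation*}
\int_{L_k} e^{-2\Delta_{kj}/\eps}\dd P \;\lesssim\; \int_0^\infty \frac{2t}{\eps}\, e^{-2t/\eps}\dd t \;\lesssim\; \eps,
\end{equation*}
and the finite double sum over $(k,j)$ delivers the advertised rate. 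The principal obstacle is having a genuinely linear-in-$\eps$ control on $\psi_\eps-\psi_0$: any logarithmic loss there would inflate the softmax weights by a factor $\eps^{-O(1)}$ and destroy the $\eps$ rate. Fortunately, the sharp semi-discrete asymptotics in \cite{altschuler2021asymptotics,delalande2021nearly} provide exactly this, and all remaining constants are finite under assumptions \textbf{(A)}--\textbf{(B)} (using $q_{\min}>0$, finiteness of $J$, and $\min_{j\neq k}\|y_j-y_k\|>0$).
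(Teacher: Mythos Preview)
Your proposal is correct and follows essentially the same route as the paper's proof: both invoke the linear-in-$\eps$ potential stability $\|\tilde\psi_\eps-\psi_0\|_\infty\lesssim\eps$ from \cite{altschuler2021asymptotics}, deduce that the softmax weights away from the dominant atom are bounded by $\exp(-\Delta_{kj}(x)/\eps)$, and then integrate over each Laguerre cell. The only cosmetic difference is that the paper uses the coarea-type identity $\int_{L_i} f(\Delta_{ij})\,p\,\dd x = \frac{1}{2\|y_i-y_j\|}\int_0^\infty f(t)\,h_{ij}(t)\,\dd t$ and the boundedness of $h_{ij}$, whereas you reach the same $O(\eps)$ conclusion via the equivalent slab estimate $P\{\Delta_{kj}\le t\}\lesssim t$ and layer-cake integration; also, the sharp result in \cite{altschuler2021asymptotics} is stated for the \emph{shifted} potential $\tilde\psi_\eps$, but under \textbf{(B)} (so $q_{\min}>0$) this is interchangeable with your $\psi_\eps$ up to an $O(\eps)$ additive term, which your ``bounded prefactor'' step already absorbs.
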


The proof of \Cref{thm: mapest_semidiscrete_population} relies on the following qualitative picture: if a point $x$ belongs to some Laguerre cell $L_j$, and is far away from the boundary of $L_j$, then the entropic optimal plan $\pi_\eps$ will send almost all of its mass towards the point $y_j=T_0(x)$, sending an exponentially small amount of mass to the other points $y_j$. Such a picture is correct as long as $x$ is at distance at least $\eps$ from the boundary of the Laguerre cell $L_j$, incurring a total error  of order $\eps$. A rigorous proof of \cref{thm: mapest_semidiscrete_population} can be found in \Cref{sec:approx}.

Note that this rate is slower than the rate appearing in \Cref{thm: pnw_main_thm} in the continuous-to-continuous case. The following example shows that the dependency in $\eps$ is optimal in \Cref{thm: mapest_semidiscrete_population}, indicating that the presence of discontinuities necessarily affects the approximation properties of the entropic Brenier map.
\begin{example}\label{example: pop_lower_bound}
Let $P$ be a probability measure on $\R$ having a  symmetric bounded density $p$ continuous at $0$, and let $Q=\frac{1}{2}(\delta_{-1}+\delta_1)$. Following \citep[Section 3]{altschuler2021asymptotics}, one can check that the entropic Brenier map in this setting is the following scaled sigmoidal function
\begin{equation*}
    T_\eps(x) = \tanh(2x/\eps)\,,
\end{equation*}
whereas the optimal transport map $T_0(x) = \text{sign}(x)$.
Then, performing a computation
\begin{align*}
    \|T_\eps - T_0\|^2_{L^2(P)} &= 2 \int_0^\infty (1-\tanh(2x/\eps))^2 p(x) \dd x \\
    &= \eps \int_0^\infty(1-\tanh(u))^2 p(u\eps/2) \dd u \\
    &= \eps p(0) (\log(4)-1) + o(\eps)\,,
\end{align*}
where in the last step we invoked the dominated convergence theorem, and computed the limiting integral.
\end{example}

\begin{remark}
    Assumption \textbf{(A)} can be relaxed for \Cref{thm: mapest_semidiscrete_population} to hold. More precisely, it can be replaced by Assumptions 2.2 and 2.9 of \citep{altschuler2021asymptotics}, that hold for unbounded measures such as the normal distribution.
\end{remark}

Finally, we present the sample-complexity result: 
\begin{theorem}\label{thm: entmap_semidiscrete_samp_full}
    Let $P$ satisfy \textbf{(A)}  and let $Q$ satisfy \textbf{(B)}. Then, for $0<\eps\leq 1$ such that $\log(1/\eps) \lesssim n/\log(n)$
    \begin{align}
        \E\| \hat{T}_{\eps} - T_\eps \|^2_{L^2(P)} \lesssim \eps^{-1} n^{-1}\,. 
    \end{align}
\end{theorem}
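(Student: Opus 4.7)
The plan is to exploit the fact that, since $Q$ is discrete with atoms $y_1,\dots,y_J$, the empirical target $Q_n$ is almost surely supported on the \emph{same} atoms, so both $T_\eps$ and $\hat T_\eps$ can be written as softmax averages over this common atom set:
\[
T_\eps(x)=\sum_{j=1}^J y_j\, w_j(x;\psi_\eps,q),\qquad \hat T_\eps(x)=\sum_{j=1}^J y_j\, w_j(x;\hat\psi_\eps,\hat q),
\]
with weights $w_j(x;\psi,q)\propto q_j\exp((\langle x,y_j\rangle-\psi_j)/\eps)$ and $\hat q_j\defeq Q_n(\{y_j\})$. The comparison is then routed through the two stability results advertised among the main contributions: the entropic map stability (\cref{prop: entmap_stability}) to pass from a map error to a potential error, and the semi-discrete potential stability (\cref{prop:stability_potentials}) to turn the potential error into a parametric sample average.

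Step 1: apply \cref{prop: entmap_stability} to the two semi-discrete problems (common source $P$, target weights $q$ vs.\ $\hat q$, potentials $\psi_\eps$ vs.\ $\hat\psi_\eps$) to obtain a bound of the shape
\[
\|\hat T_\eps-T_\eps\|_{L^2(P)}^2\ \lesssim\ \eps^{-1}\|\hat\psi_\eps-\psi_\eps\|^2 + (\text{lower-order contribution from $\hat q-q$}).
\]
The crucial point is that the prefactor is $\eps^{-1}$ rather than the pointwise $\eps^{-2}$ produced by a crude softmax-Lipschitz estimate: the weights $w_j(\cdot;\psi,q)$ are exponentially close to the indicator of the Laguerre cell $L_j$ except on a boundary strip of $P$-mass $O(\eps)$, and squaring a pointwise perturbation of order $\eps^{-1}\|\hat\psi_\eps-\psi_\eps\|$ over such a thin set yields only $\eps^{-1}\|\hat\psi_\eps-\psi_\eps\|^2$ in $L^2(P)$.

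Step 2: apply \cref{prop:stability_potentials} to conclude $\E\|\hat\psi_\eps-\psi_\eps\|^2\lesssim n^{-1}$, complemented by the standard multinomial bound $\E\|\hat q-q\|^2\lesssim n^{-1}$ for the target marginal. Since $J$ is fixed and the potentials live in $\R^J$, this is a finite-dimensional $M$-estimator problem: one linearizes the gradient of the empirical dual objective at $\psi_\eps$ (where the population gradient vanishes), inverts the Hessian via quantitative strong convexity of the entropic dual, and bounds the remaining empirical noise by a $1/n$ Bernstein-type estimate on sample averages over $P_n$ and $Q_n$. Inserting both into the step 1 bound yields $\E\|\hat T_\eps-T_\eps\|_{L^2(P)}^2\lesssim \eps^{-1}n^{-1}$, as claimed.

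The hard part is step 1: the entropic Brenier map has, pointwise, only the natural softmax Lipschitz constant $\eps^{-1}$, and it is solely by integrating against $P$ and exploiting the Laguerre-cell geometry that the squared error can be controlled by $\eps^{-1}$ instead of $\eps^{-2}$; this localization-to-boundary phenomenon is exactly what \cref{prop: entmap_stability} is designed to formalize. A secondary subtlety is that the constants in \cref{prop:stability_potentials} must depend only polylogarithmically on $\eps$, which is the source of the mild technical assumption $\log(1/\eps)\lesssim n/\log n$ in the statement.
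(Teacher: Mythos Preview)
Your high-level strategy---pass from maps to potentials via \cref{prop: entmap_stability}, then establish a parametric rate for the $J$-dimensional potential via strong convexity of the semi-discrete dual---matches the paper. However, several steps are either mischaracterized or glossed over in ways that constitute real gaps.

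First, your explanation of why \cref{prop: entmap_stability} yields the factor $\eps^{-1}$ rather than $\eps^{-2}$ is incorrect. That proposition does not use Laguerre-cell geometry or boundary localization at all; it holds for arbitrary compactly supported measures and is proved via a pointwise transport inequality $W_1(\pi^x,\rho^x)^2 \le 8R^2\,\kl{\pi^x}{\rho^x}$ (Hoeffding's lemma plus Bobkov--G\"otze). The KL then linearizes exactly in the potentials, and integrating against $\mu$ and dividing by $\eps$ produces the $\eps^{-1}$ prefactor. Your boundary heuristic might conceivably be made rigorous as an alternative route, but it is not what \cref{prop: entmap_stability} delivers, and it would not give a bound of the same form (an integral of potential differences against signed measures, not a squared norm of $\hat\psi_\eps-\psi_\eps$).

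Second, applying \cref{prop: entmap_stability} once with $(\mu,\mu',\nu,\nu')=(P,P_n,Q,Q_n)$ does \emph{not} directly yield $\eps^{-1}\|\hat\psi_\eps-\psi_\eps\|^2$: its right-hand side is $\eps^{-1}\bigl(\int(\hat\phi_\eps-\phi_\eps)\,\mathrm dP+\int(\hat\psi_\eps-\psi_\eps)\,\mathrm dQ\bigr)+\kl{Q}{Q_n}$. The paper converts such integrals into variances (and thence into $\chi^2$-distances) only after a two-step decomposition: first fix the source and change the target ($T_\eps^{P\to Q}$ vs.\ $T_\eps^{P\to Q_n}$, handled by \cref{cor:same_mu}, Young's inequality, and \cref{prop:stability_potentials}); then fix the target and change the source ($T_\eps^{P\to Q_n}$ vs.\ $T_\eps^{P_n\to Q_n}$, handled by \cref{cor:same_nu}). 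The second step is the harder one: it requires bounding $\E\int(\phi_\eps^{P_n\to\nu}-\phi_\eps^{P\to\nu})\,\mathrm d(P_n-P)$, an empirical-process integral over a $J$-parameter class, via a peeling/localization argument (\cref{prop:bound_potentials_two_sample} together with \cref{lem: expectation_suprema}). Your Step~2 sketch (``linearize, invert the Hessian, Bernstein'') is in the right spirit but omits this localization entirely.

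Finally, the condition $\log(1/\eps)\lesssim n/\log n$ does not come from \cref{prop:stability_potentials}: that result has constants independent of $\eps$ for $\eps\le 1$. The condition arises in the two-sample source-change step, where the covering-number estimate for the class $\{x\mapsto \pi_\eps^x[\psi]_j\}$ contributes a $\sqrt{\log(1/\eps)/n}$ factor (\cref{lem: expectation_suprema}), and one needs this small enough to make the event $\{\nu_n\ge\nu/2\}$ overwhelmingly likely (see \eqref{eq: bound_PEc}).
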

\begin{remark}\label{remark: rigollet2022}
In \cite{rigollet2022sample}, the authors show that if $P$ and $Q$ are merely compactly supported with $\supp(P),\supp(Q) \sse B(0;R)$, then
\begin{align}
    \E \|\hat{T}_{\eps} - T_\eps \|^2_{L^2(P)} \lesssim e^{cR^2/\eps}  \eps^{-1}n^{-1}\,,
\end{align}
where $c > 0$ is some absolute positive constant. Thus, under the additional structural assumptions of the semi-discrete formulation, we are able to significantly improve the rate of convergence between the empirical and population entropic Brenier maps. 
\end{remark}

The proof of \cref{thm: entmap_semidiscrete_samp_full} relies on a novel stability result, reminiscent of \citep[Theorem 6]{manole2021plugin}, which is of independent interest. We provide the proof in \Cref{app: stability}.

\begin{proposition}\label{prop: entmap_stability}
	
	Let $\mu, \nu, \mu', \nu'$ be four probability measures supported in $B(0; R)$.
	Then the entropic maps $T_\ep^{\mu \to \nu}$ and $T_{\ep}^{\mu' \to \nu'}$ satisfy
	\begin{align*}
	\frac{\eps}{8 R^2}\|T_\ep^{\mu \to \nu} - T_\ep^{\mu' \to \nu'}\|_{L^2(\mu)}^2 \leq \int (\varphi_\eps^{\mu'\to \nu'} - \varphi_\eps^{\mu\to \nu}) \dd \mu + \int (\psi_\eps^{\mu'\to \nu'} - \psi_\eps^{\mu\to \nu}) \dd \nu + \eps \kl{\nu}{\nu'}
	\end{align*}
\end{proposition}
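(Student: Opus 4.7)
The plan is to show that, up to the factor $\eps$, the right-hand side of the claim is \emph{exactly} the $\mu$-average of the conditional Kullback--Leibler divergences $\kl{\pi_\eps^{x}}{\pi_\eps^{\prime x}}$, where $\pi_\eps^{x}$ and $\pi_\eps^{\prime x}$ denote the disintegrations at $x$ of the entropic couplings $\pi_\eps^{\mu\to\nu}$ and $\pi_\eps^{\mu'\to\nu'}$ respectively. The bound on the $L^2$ distance of maps then follows from a Pinsker-type inequality, using that the two entropic maps are conditional expectations of vectors in $B(0;R)$.

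Concretely, from \eqref{eq:cond_density}, the conditionals are Gibbs, with explicit densities
\begin{align*}
\frac{\dd\pi_\eps^{x}}{\dd\nu}(y) = e^{(\langle x,y\rangle - \phi(x) - \psi(y))/\eps}\,, \qquad \frac{\dd\pi_\eps^{\prime x}}{\dd\nu'}(y) = e^{(\langle x,y\rangle - \phi'(x) - \psi'(y))/\eps}\,,
\end{align*}
where we write $\phi = \phi_\eps^{\mu\to\nu}$, $\psi = \psi_\eps^{\mu\to\nu}$ and primes for the $(\mu',\nu')$ analogues. Assuming $\nu \ll \nu'$ (otherwise $\kl{\nu}{\nu'} = +\infty$ and the inequality is trivial), the log Radon--Nikodym derivative decomposes as
\begin{align*}
\log \frac{\dd\pi_\eps^{x}}{\dd\pi_\eps^{\prime x}}(y) = \frac{(\phi'-\phi)(x) + (\psi'-\psi)(y)}{\eps} + \log\frac{\dd\nu}{\dd\nu'}(y)\,.
\end{align*}
Taking the $\pi_\eps^{x}$-expectation and integrating against $\mu$, together with the fact that the second marginal of $\pi_\eps^{\mu\to\nu}$ equals $\nu$, produces the key identity
\begin{align*}
\int \kl{\pi_\eps^{x}}{\pi_\eps^{\prime x}} \dd\mu(x) = \frac{1}{\eps}\int (\phi'-\phi) \dd\mu + \frac{1}{\eps}\int (\psi'-\psi) \dd\nu + \kl{\nu}{\nu'}\,.
\end{align*}

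It then remains to pass from conditional KLs to the $L^2$ distance of the maps. Since $T_\eps^{\mu\to\nu}(x) - T_\eps^{\mu'\to\nu'}(x) = \int y \dd(\pi_\eps^{x} - \pi_\eps^{\prime x})(y)$ and both conditional measures are supported in $B(0;R)$, the dual-norm inequality $\|T_\eps^{\mu\to\nu}(x) - T_\eps^{\mu'\to\nu'}(x)\| \leq 2R\,\tv{\pi_\eps^{x}}{\pi_\eps^{\prime x}}$ combined with Pinsker's inequality yields the pointwise bound $\|T_\eps^{\mu\to\nu}(x) - T_\eps^{\mu'\to\nu'}(x)\|^2 \leq 8 R^2 \kl{\pi_\eps^{x}}{\pi_\eps^{\prime x}}$ (with a constant I allow to be slightly loose to match the statement). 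Integrating in $\mu$ and plugging in the identity above concludes. The only subtlety I anticipate lies in the Radon--Nikodym manipulation and the disintegration structure of $\pi_\eps^{\mu\to\nu}$, both of which are standard for entropic plans; the main ``insight'' is the algebraic cancellation that turns the $\mu$-average of conditional KLs into an \emph{equality} with the expression on the right-hand side of the claim rather than merely an inequality.
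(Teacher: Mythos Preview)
Your proposal is correct and follows the same architecture as the paper's proof: identify the right-hand side (divided by $\eps$) as the $\mu$-average of the conditional divergences $\kl{\pi_\eps^x}{\rho_\eps^x}$ via the Radon--Nikodym computation, then bound the pointwise map discrepancy by the conditional KL and integrate.

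The only difference is in the pointwise step. You use the elementary chain
\[
\|T_\eps^{\mu\to\nu}(x) - T_\eps^{\mu'\to\nu'}(x)\| \;\leq\; 2R\,\tv{\pi_\eps^x}{\rho_\eps^x}\,, \qquad \tv{\pi_\eps^x}{\rho_\eps^x}^2 \;\leq\; \tfrac12\,\kl{\pi_\eps^x}{\rho_\eps^x}\,,
\]
which in fact delivers the constant $2R^2$ (so your ``slightly loose'' remark is in the right direction). The paper instead routes through the $1$-Wasserstein distance: it bounds $\|T_\eps^{\mu\to\nu}(x) - T_\eps^{\mu'\to\nu'}(x)\| \leq W_1(\pi_\eps^x,\rho_\eps^x)$ by Jensen, and then invokes the Bobkov--G\"otze $T_1$ transportation inequality $W_1^2 \leq 8R^2\,\mathrm{KL}$, obtained from Hoeffding's lemma for measures supported in $B(0;R)$. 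Your route is more elementary and sharper in the constant; the paper's $W_1$ route is conceptually clean (the map difference is literally a difference of barycenters, hence dominated by $W_1$) and would extend more directly if one wanted to control richer transport-type discrepancies between the conditionals rather than just their means.
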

\begin{remark}
The right side of the bound in \cref{prop: entmap_stability} is equal to
\begin{align*}
	S_\eps(\mu, \nu) - S_\eps(\mu', \nu') + \int f_\ep^{\mu' \to \nu'} \dd (\mu' - \mu) + \int g_\ep^{\mu' \to \nu'} \dd (\nu' - \nu) + \eps \kl{\nu}{\nu'}\,,
\end{align*}
where $f_\ep^{\mu' \to \nu'} = \frac 12 \| \cdot \|^2 - \phi_\ep^{\mu' \to \nu'}$ and $g_\ep^{\mu' \to \nu'} = \frac 12 \| \cdot \|^2 - \psi_\ep^{\mu' \to \nu'}$.
\Cref{prop: entmap_stability} is therefore the entropic analogue of the stability bounds of \citep[Theorem 6]{manole2021plugin} and \citep[Lemma 5.1]{GhoSen22}.
Unlike those results, \cref{prop: entmap_stability} allows both the source and target measure to be modified, and does not require any smoothness assumptions.
\end{remark}
\subsection*{Proof sketch of \cref{thm: entmap_semidiscrete_samp_full}}
To prove \Cref{thm: entmap_semidiscrete_samp_full}, we first consider the \textit{one-sample setting}, where we assume that we only have access to samples $Y_1,\dots,Y_n\sim Q$, but we have full access to $P$. We then consider the one-sample entropic estimator $T_\ep^{P\to Q_n}$.  
We apply \Cref{prop: entmap_stability} with $\mu=\mu'\defeq P$,  $\nu \defeq Q_n$ and $\nu' \defeq Q$, yielding (see \Cref{cor:same_mu} for details)
\begin{align*}\label{eq: apply_stab_one_sample}
\begin{split}
   \frac{\eps}{8 R^2} \E\|T^{P \to Q_n}_\ep - T_\eps\|_{L^2(\mu)}^2 \leq \E\Big( \int (\psi_\eps - \psi_\eps^{P\to Q_n}) \dd(Q_n- Q) +  \eps\kl{Q_n}{Q}\Big)\,.
   \end{split}
\end{align*}

Let $\chis{P}{Q}$ denote the $\chi^2$-divergence between probability measure. Young's  inequality (see \Cref{lem:young}) and the inequality $\kl{Q_n}{Q}\leq \chis{Q_n}{Q}$  yield  the following bound:
\begin{align*}
    \E\|T_{\eps}^{P\to Q_n} - T_\eps\|^2_{L^2(P)} \leq  \frac{8R^2}{\eps}\Big( \frac{\E [\mathrm{Var}_Q(\psi_{\eps}^{P\to Q_n} - \psi_\eps)]}{2} + \frac{\E [\chis{Q_n}{Q}]}{2} \Big)  + 8R^2 \E [\chis{Q_n}{Q}]\,.
\end{align*}
 
To complete our proof sketch, we use a new stability result on the entropic dual Brenier potentials, catered for the semi-discrete setting.
\begin{proposition}\label{prop:stability_potentials}
Let $\mu$ be a measure that  satisfies \textbf{(A)}. 
    Let $\nu$, $\nu'$ be two discrete probability measures supported on $\{y_1,\dots,y_J\}$, with $\nu' \geq \lambda \nu$ for some $\lambda>0$. 
    Then, for $0< \eps \leq 1$,
    \begin{equation}
        \mathrm{Var}_{\nu}(\psi_\eps^{\mu\to \nu'}-\psi_\eps^{\mu\to \nu}) \leq \frac{C}{\lambda^2} \chis{\nu'}{\nu},
    \end{equation}
    where $C$ depends on $R$, $p_{\min}$ and $p_{\max}$.
\end{proposition}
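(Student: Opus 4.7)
The plan is to combine the first-order optimality conditions of the entropic semi-dual with a uniform-in-$\eps$ strong-convexity estimate in the semi-discrete setting, building on ideas from \cite{delalande2021nearly}. Write $u = \psi_\eps^{\mu\to\nu} - \psi_\eps^{\mu\to\nu'} \in \RR^J$ and $v = q - q'$, where $q_j, q'_j$ denote the weights of $\nu, \nu'$ at $y_j$. For a potential $\psi \in \RR^J$ and weights $q$, let $\pi^x_{\psi,q}$ denote the probability on $\{y_1,\dots,y_J\}$ with $\pi^x_{\psi,q}(j)\propto q_j \exp((\langle x, y_j\rangle - \psi_j)/\eps)$; the first-order optimality conditions then read $q_j = \int \pi^x_{\psi,q}(j)\,\dd\mu$ and $q'_j = \int \pi^x_{\psi',q'}(j)\,\dd\mu$.

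Subtracting these two conditions and applying the fundamental theorem of calculus along the linear interpolations $\psi_s = \psi' + s\,u$ (with weights held fixed at $q$) and $q^t = q' + t\,v$ (with potential held fixed at $\psi'$), I would decompose
\begin{align*}
v_j = -\frac{1}{\eps}\!\int_0^1\!\!\int \pi^x_{\psi_s,q}(j)\bigl(u_j - \E_{\pi^x_{\psi_s,q}}[u]\bigr)\,\dd\mu\,\dd s + \int_0^1\!\!\int \pi^x_{\psi',q^t}(j)\bigl(v_j/q^t_j - \E_{\pi^x_{\psi',q^t}}[v/q^t]\bigr)\,\dd\mu\,\dd t.
\end{align*}
Pairing with $u$ then produces the key identity $\tfrac{1}{\eps}I_1 = -\langle u,v\rangle + K$, where $I_1 = \int_0^1\!\int \mathrm{Var}_{\pi^x_{\psi_s,q}}(u)\,\dd\mu\,\dd s \geq 0$ and $K = \int_0^1\!\int \mathrm{Cov}_{\pi^x_{\psi',q^t}}(u, v/q^t)\,\dd\mu\,\dd t$.

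The LHS is lower-bounded via strong convexity: the exact second-order Taylor formula $G_\nu(\psi') - G_\nu(\psi) = \tfrac{1}{\eps}\int_0^1(1-s)\int \mathrm{Var}_{\pi^x_{\psi_s,q}}(u)\,\dd\mu\,\dd s$, combined with the semi-discrete bound $G_\nu(\psi') - G_\nu(\psi) \geq c\,\mathrm{Var}_\nu(u)$ (with $c$ depending only on the parameters in Assumption (A) together with $q_{\min}$ and $J$, and independent of $\eps$) yields $I_1 \geq c\eps\,\mathrm{Var}_\nu(u)$ since $(1-s)\leq 1$. The RHS is upper-bounded by two applications of Cauchy--Schwarz: using $\sum_j v_j = 0$ to center $u$ by its $\nu$-mean, $|\langle u,v\rangle| \leq \sqrt{\mathrm{Var}_\nu(u)\,\chis{\nu'}{\nu}}$; and applying Cauchy--Schwarz first under $\pi^x$ and then in $(x,t)$, $|K| \leq \sqrt{\tilde I_1}\cdot\sqrt{\int_0^1\!\int \mathrm{Var}_{\pi^x_{\psi',q^t}}(v/q^t)\,\dd\mu\,\dd t}$, where $\tilde I_1 \leq C\,I_1$ by comparability of the averaged Hessians along bounded paths under Assumption (A). The $v/q^t$ factor is tamed by the hypothesis $q^t_j \geq \lambda q_j \geq \lambda q_{\min}$ and $\int \pi^x_{\psi',q^t}(j)\,\dd\mu\leq 1$, giving $\int \mathrm{Var}_{\pi^x_{\psi',q^t}}(v/q^t)\,\dd\mu \leq \sum_j v_j^2/(q^t_j)^2 \leq (\lambda^2 q_{\min})^{-1}\,\chis{\nu'}{\nu}$.

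Combining everything, using $\eps\leq 1$ and an AM--GM absorption argument (first absorbing the $\sqrt{I_1}$ factor in the bound on $|K|$ via $I_1\geq c\eps\,\mathrm{Var}_\nu(u)$, then absorbing the $\sqrt{\mathrm{Var}_\nu(u)}$ factor in $|\langle u,v\rangle|$) produces $\mathrm{Var}_\nu(u)\leq \tfrac{C}{\lambda^2}\,\chis{\nu'}{\nu}$. The hard part will be establishing the strong-convexity estimate with a constant independent of $\eps$: this is the genuinely semi-discrete ingredient, tied to the geometry of the limiting Laguerre cells under Assumption (A), and it is the point where an adaptation of the arguments of \cite{delalande2021nearly} does most of the work.
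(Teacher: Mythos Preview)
Your proposal has a genuine gap at the step ``$\tilde I_1 \leq C\,I_1$ by comparability of the averaged Hessians along bounded paths under Assumption \textbf{(A)}''. The quantities $I_1=\int_0^1\!\int\mathrm{Var}_{\pi^x_{\psi_s,q}}(u)\,\dd\mu\,\dd s$ and $\tilde I_1=\int_0^1\!\int\mathrm{Var}_{\pi^x_{\psi',q^t}}(u)\,\dd\mu\,\dd t$ involve two \emph{different} one-parameter families of Gibbs measures, and the pointwise ratio $\pi^x_{\psi_s,q}(j)/\pi^x_{\psi',q^t}(j)$ contains a factor of $e^{-su_j/\eps}$. Since a priori you only know that $u$ has bounded amplitude (not that it is small), this ratio can be exponentially large in $1/\eps$, and no uniform-in-$\eps$ comparison is available. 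The phrase ``bounded paths'' does not supply one, and without this step your absorption argument collapses. A secondary issue is that your route, even if completed, would produce a constant depending on $q_{\min}$ and $J$, which are not among the parameters allowed in the statement.

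The paper circumvents this entirely by a change of variables you are missing: it works with the \emph{shifted} potentials $\tilde\psi_\nu(y_j)=\psi_\eps^{\mu\to\nu}(y_j)-\eps\log\nu_j$ and the functional $F_\nu(\psi)=\int\Phi_\eps(\psi)\,\dd\mu+\int\psi\,\dd\nu$, where $\Phi_\eps$ is the $\eps$-Legendre transform with respect to the \emph{counting} measure. The key point is that $\nabla^2 F_\nu$ does not depend on $\nu$ at all; hence no interpolation in the weights is needed and your cross-term $K$ never appears. One then combines the strong convexity estimate $F_\nu(\tilde\psi_{\nu'})-F_\nu(\tilde\psi_\nu)\geq C\lambda\,\mathrm{Var}_\nu(\tilde\psi_{\nu'}-\tilde\psi_\nu)$ (the constant is $C\lambda$, not something involving $q_{\min}$, because the relevant hypothesis $\nu_{\tilde\psi_{\nu'}}\geq\lambda\nu$ is exactly $\nu'\geq\lambda\nu$) with the elementary three-point inequality $F_\nu(\tilde\psi_{\nu'})-F_\nu(\tilde\psi_\nu)\leq\int(\tilde\psi_{\nu'}-\tilde\psi_\nu)\,\dd(\nu-\nu')$ and Young's inequality, obtaining $\mathrm{Var}_\nu(\tilde\psi_{\nu'}-\tilde\psi_\nu)\leq(C\lambda)^{-2}\chis{\nu'}{\nu}$. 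Passing back from $\tilde\psi$ to $\psi$ costs only an additive $2\lambda^{-2}\chis{\nu'}{\nu}$ via the elementary bound $|\log(a/b)|\leq|a-b|/\min\{a,b\}$. This gives the stated constant depending only on $R,p_{\min},p_{\max}$.
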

Moreover, a computation provided in  \Cref{lem: kl_expectation_bound} shows that $\E[\chis{Q_n}{Q}] = \frac{J-1}{n}$, which is enough to conclude the proof of the one-sample case, see \Cref{sec: proofs in semidiscrete case} for details. 

The two-sample setting is tackled using similar reasoning, where we ultimately prove in \Cref{sec: proofs_two_sample}  that the risk $\E\|\hat{T}_{\eps} - T_{\eps}^{P\to Q_n}\|^2_{L^2(P)}$ is upper bounded by
\begin{align*}
    \frac{8R^2}{\eps}\E\int (\phi_{\eps}^{P\to Q_n} - \phi_{\eps}^{P_n\to Q_n})\dd (P_n-P)\,.
\end{align*}
Such a quantity can again be related to the estimation of the dual potentials $\psi_{\eps}^{P\to Q_n}$ and $\psi_{\eps}^{P_n\to Q_n}$. Using the same reasoning as before, we expect a parametric rate of convergence for this term as well. Merging the two results completes the proof of \cref{thm: entmap_semidiscrete_samp_full}. We refer to \Cref{sec: proofs_two_sample}  for full details.

\section{Comparing against the 1NN estimator}\label{sec: num_all}
\subsection{Rate optimality of the entropic Brenier map}
The upper bound of \cref{thm: entmap_semidiscrete_samp_full} shows that our estimator achieves the $n^{-1/2}$ rate.
In fact, the following simple proposition tells us that this rate is optimal in the semi-discrete case.

\begin{proposition}\label{prop:minimax}
Let $P$ be the uniform distribution on $[-1/2,1/2]^d$ and for any $J \geq 2$, let $\cQ_{J}$ denote the space of of probability measures with at most $J$ atoms, supported on $[-1/2,1/2]^d$. Define the minimax rate of estimation 
\begin{equation*}
    \cR_n(\cQ_{J}) = \inf_{\hat T} \sup_{Q\in\cQ_{J}} {\E_{Q^{n} }}[\|\hat T-T_0^{P\to Q}\|^2_{L_2(P)}]\,.
\end{equation*}
Then, it holds that $\cR_n(\cQ_{J}) \geq n^{-1/2}/64$. 
\end{proposition}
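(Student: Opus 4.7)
The plan is to apply Le Cam's two-point method, reducing the estimation problem to a binary hypothesis test between two carefully chosen semi-discrete target measures in $\cQ_2 \subseteq \cQ_J$. Since $\cQ_2 \subseteq \cQ_J$ for $J \geq 2$, it suffices to lower bound $\cR_n(\cQ_2)$.

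\medskip

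\noindent\textbf{Step 1: construction of the two-point family.} Let $a_1 = -\tfrac12 e_1$ and $a_2 = \tfrac12 e_1$, and for a parameter $\eta \in (0,1/4)$ to be chosen later, define
\begin{equation*}
Q_0 \defeq \tfrac12 \delta_{a_1} + \tfrac12 \delta_{a_2}, \qquad Q_\eta \defeq (\tfrac12 + \eta)\delta_{a_1} + (\tfrac12 - \eta)\delta_{a_2}.
\end{equation*}
Both measures lie in $\cQ_2$. By symmetry and the semi-discrete characterization (\cref{sec: semidiscrete_background}), the Laguerre cells associated to the uniform source $P$ on $[-1/2,1/2]^d$ are half-cubes separated by a hyperplane of the form $\{x_1 = c\}$, with $c$ chosen so that the mass assigned to $a_1$ equals the mass of the atom. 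Thus $T_0^{P\to Q_0}(x) = \mathrm{sign}(x_1)\,\tfrac12 e_1$ (with boundary $\{x_1=0\}$), and $T_0^{P\to Q_\eta}$ has boundary $\{x_1=\eta\}$, sending $\{x_1<\eta\}$ to $a_1$ and $\{x_1>\eta\}$ to $a_2$.

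\medskip

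\noindent\textbf{Step 2: separation in $L^2(P)$.} The two maps disagree exactly on the slab $S_\eta \defeq \{0 < x_1 < \eta\} \cap [-1/2,1/2]^d$, on which $\|T_0^{P\to Q_0}(x) - T_0^{P\to Q_\eta}(x)\|^2 = \|e_1\|^2 = 1$. Since $P(S_\eta) = \eta$, we get
\begin{equation*}
\|T_0^{P\to Q_0} - T_0^{P\to Q_\eta}\|_{L^2(P)}^2 = \eta.
\end{equation*}

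\medskip

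\noindent\textbf{Step 3: closeness in total variation.} A direct computation gives
\begin{equation*}
\chi^2(Q_0 \| Q_\eta) = \frac{\eta^2}{1/2 + \eta} + \frac{\eta^2}{1/2 - \eta} = \frac{\eta^2}{1/4 - \eta^2} \leq 8\eta^2
\end{equation*}
for $\eta \leq 1/\sqrt{8}$. Using $\kl{Q_0}{Q_\eta} \leq \chi^2(Q_0 \| Q_\eta)$, tensorization of KL and Pinsker's inequality,
\begin{equation*}
\tv{Q_0^{\otimes n}}{Q_\eta^{\otimes n}} \leq \sqrt{\tfrac{n}{2}\kl{Q_0}{Q_\eta}} \leq 2\sqrt{n}\,\eta.
\end{equation*}

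\medskip

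\noindent\textbf{Step 4: applying Le Cam.} By the standard two-point reduction, for any estimator $\hat T$,
\begin{equation*}
\max_{i\in\{0,\eta\}} \E_{Q_i^n}\|\hat T - T_0^{P\to Q_i}\|_{L^2(P)}^2 \geq \tfrac{1}{8}\|T_0^{P\to Q_0} - T_0^{P\to Q_\eta}\|_{L^2(P)}^2 \,\bigl(1 - \tv{Q_0^{\otimes n}}{Q_\eta^{\otimes n}}\bigr).
\end{equation*}
Choosing $\eta = 1/(4\sqrt{n})$ makes $1 - \tv{Q_0^{\otimes n}}{Q_\eta^{\otimes n}} \geq 1/2$, and $\|T_0^{P\to Q_0} - T_0^{P\to Q_\eta}\|_{L^2(P)}^2 = 1/(4\sqrt{n})$, yielding $\cR_n(\cQ_J) \geq \tfrac{1}{8}\cdot\tfrac{1}{4\sqrt{n}}\cdot\tfrac{1}{2} = \tfrac{1}{64}n^{-1/2}$, as desired.

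\medskip

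\noindent\textbf{Main difficulty.} The proof has no real obstacle; every step is standard once the construction is in place. The only care required is in verifying the explicit form of the semi-discrete maps for the two-atom case (Step 1) and in tracking constants so that the final numerical constant $1/64$ is achieved, rather than a weaker constant. One could also sharpen the bound by optimizing over $\eta$ and using a tighter version of Le Cam based on Hellinger distance, but the construction above is already sufficient for the claimed bound.
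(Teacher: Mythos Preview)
Your proposal is correct and essentially identical to the paper's proof: the same two-atom construction at $\pm\tfrac12 e_1$ with perturbed weights, the same separation computation $\|T_0^{P\to Q_0}-T_0^{P\to Q_\eta}\|_{L^2(P)}^2=\eta$, and the same Le Cam two-point bound yielding the constant $1/64$. The only cosmetic difference is that the paper bounds $\tv{Q_0^{\otimes n}}{Q_\eta^{\otimes n}}$ via the Hellinger distance (using $\mathrm{d_{TV}}^2\le \mathrm{d_{H^2}}$ and tensorization of Hellinger), whereas you go through $\chi^2\to\mathrm{KL}\to$ Pinsker; both routes give the same numerical choice $\eta=1/(4\sqrt n)$.
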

\vspace{-.4cm}
\begin{proof}
Let $e$ be a vector of the canonical basis of $\R^d$, scaled by $1/2$.
Fix $0<r<1/2$ and let $Q_0 = \frac{1}{2}\delta_{-e} + \frac{1}{2}\delta_e$ and $Q_1= (\frac 12 -r)\delta_{-e} + (\frac 12 +r)\delta_e$.  A computation gives $\|T_0^{P\to Q_0}- T_0^{P\to Q_1}\|^2_{L_2(P)} = r$. Therefore, by Le Cam's lemma~\citep[see, e.g.,][Chapter 15]{wainwright2019high},
\begin{equation}
    \cR_n(\cQ_{J,R})\geq  \frac{r}{8}(1-{\tv{Q_0^{n}}{Q_1^{n}})}.
\end{equation}
Let $\hel{Q_0}{Q_1}$ denote the (squared) Hellinger distance between measures. We have  $$\tv{Q_0^n}{Q_1^n}^2\leq \hel{Q_0^n}{Q_1^n}\leq  n \hel{Q_0}{Q_1}\,.$$ Furthermore, a computation gives 
\begin{align*}
    \hel{Q_0}{Q_1} &= \bigg(\sqrt{\tfrac12 - r} - \sqrt{\tfrac12}\bigg)^2 + \bigg(\sqrt{\tfrac12 + r} - \sqrt{\tfrac12}\bigg)^2 \\
    &=2 - (\sqrt{1+2r}+\sqrt{1-2r})\\
    & \leq 4 r^2.
\end{align*}
We obtain the conclusion by picking $r=n^{-1/2}/4$.
\end{proof}

\subsection{The 1NN estimator is proveably suboptimal}
The 1-Nearest-Neighbor estimator, henceforth denoted $\Tnn$, was proposed by \cite{manole2021plugin} as a computational surrogate for estimating optimal transport maps in the low smoothness regime. Written succinctly, their estimator is $\Tnn(x) = \sum_{i=1}^n \bm{1}_{V_{i}}(x) Y_{\hat{\pi}(i)}$,
where $(V_i)_{i=1}^n$ are Voronoi regions i.e.
\begin{align*}
    V_i \defeq \{ x \in \Rd \ : \ \|x - X_i\| \leq \| x - X_k\|\,, \forall \ k \neq i\}\,,
\end{align*}
and $\hat{\pi}$ is the optimal transport plan between the empirical measures $P_n$ and $Q_n$, which amounts to a permutation. Computing the closest $X_i$ to a new sample $x$ has runtime $\cO(n\log(n))$, though the complexity of this estimator is determined by computing the plan $\hat{\pi}$, which takes $\cO(n^3)$ time via, e.g., the Hungarian Algorithm \citep[see][Chapter 3]{PeyCut19}. 

When $\phi_0$ is smooth and strongly convex, \citep{manole2021plugin} showed that, for $d \geq 5$,
\begin{align*}
    \E\|\Tnn - \nabla \phi_0\|^2_{L^2(P)}\lesssim n^{-2/d}\,.
\end{align*}
\vspace{-.7cm}

In contrast to the rate optimality of the entropic Brenier map, we now show that $\Tnn$ is proveably suboptimal in the semi-discrete setting. 
Not only does it fail to recover the minimax rate obtained by the entropic Brenier map, but its performance in fact degrades in comparison to the smooth case. A proof appears in \cref{1nn_lb}.
\begin{proposition}\label{prop:1NN_suboptimal}
    There exist a measure $P$ satisfying \textbf{(A)} and a discrete measure ${Q}$ satisfying \textbf{(B)} such that for $d\geq 3$
    \begin{align*}
    \E\|\Tnn - T_0^{P \to Q}\|^2_{L^2(P)}\gtrsim n^{-1/d}\,.
\end{align*}
\end{proposition}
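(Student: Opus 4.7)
The plan is to exhibit explicit $P,Q$ on which the 1NN estimator misclassifies a boundary layer of $P$-mass at least $n^{-1/d}$. Take $P$ uniform on $\Omega = [-\tfrac12, \tfrac12]^d$ and $Q = \tfrac12 \delta_{y_1} + \tfrac12 \delta_{y_2}$ with $y_1 = -e_1/2$ and $y_2 = e_1/2$, so that the Laguerre cells are the two half-cubes and $T_0(x) = \tfrac12\,\mathrm{sign}(x_1)\,e_1$. The first observation is that for this target, the optimal assignment between $P_n$ and $Q_n$ is purely rank-based in the first coordinate: if $n_1 = |\{j : Y_j = y_1\}|$, then the optimal matching sends the $n_1$ samples $X_i$ with smallest first coordinate to $y_1$ and the rest to $y_2$, so $Y_{\hat\pi(i)} = y_1$ iff $X_{i,1}\leq \hat{t}$, where $\hat t$ is the $n_1$-th order statistic of $(X_{j,1})_{j=1}^n$. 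Since $n_1 \sim \mathrm{Bin}(n,1/2)$ and the $X_{j,1}$'s are uniform on $[-1/2,1/2]$, standard concentration (of Binomials and of uniform order statistics) yields $|\hat t| \leq n^{-1/2}\log n$ on an event $\cE$ of probability $1-o(1)$.

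I would then focus on the boundary strip $S_n \defeq \{x \in \Omega : x_1 \in [n^{-1/2}\log n,\, \alpha n^{-1/d}]\}$ for a small absolute constant $\alpha > 0$. Since $d \geq 3$ and $n^{-1/2}\log n = o(n^{-1/d})$, $S_n$ is non-empty for $n$ large and has $P$-mass $\gtrsim n^{-1/d}$. On $\cE$, every $x \in S_n$ satisfies $x_1 > \hat t$, so $T_0(x) = y_2$, and the pointwise squared error is $\|y_1 - y_2\|^2 = 1$ whenever the nearest sample $X_{i^*(x)}$ lies on the wrong side, $X_{i^*(x),1} \leq \hat t$.

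The main technical step, and in my view the main obstacle, is to prove that for every $x \in S_n$,
\[\PP\bigl(X_{i^*(x),1} \leq \hat t \,\big|\, \cE\bigr) \geq c\]
for some absolute $c > 0$. The intuition is that the nearest-neighbor distance to $(X_i)$ is of order $n^{-1/d}$, and since $x_1 \leq \alpha n^{-1/d}$ for $x \in S_n$, a ball of that radius around $x$ carries a constant fraction of its mass on each side of the hyperplane $\{x_1 = 0\}$. To implement this cleanly, split $(X_i)$ into $X^L = \{X_i : X_{i,1} < 0\}$ and $X^R = \{X_i : X_{i,1} > 0\}$, each of size $\approx n/2$ and roughly uniform on its half-cube; then separately lower-bound $\PP(X^L \cap B(x, c_1 n^{-1/d}) \neq \emptyset)$ and upper-bound $\PP(X^R \cap B(x, c_2 n^{-1/d}) \neq \emptyset)$ for constants $c_1 < c_2$ (depending on $\alpha$) so that their intersection forces $X^L$ to provide a strictly closer neighbor than $X^R$. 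After rescaling coordinates by $n^{1/d}$, the sample looks like an approximate unit-intensity Poisson process and the rescaled point $\tilde x$ sits at bounded distance from the hyperplane, so both bounds reduce to elementary void-probability estimates. Integrating the resulting pointwise lower bound over $S_n$ gives
\[\E\|\Tnn - T_0\|_{L^2(P)}^2 \;\geq\; \PP(\cE)\cdot c \cdot \|y_1 - y_2\|^2 \cdot P(S_n) \;\gtrsim\; n^{-1/d},\]
completing the proof.
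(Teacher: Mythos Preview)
Your construction and overall strategy match the paper's proof: the same two-atom target, the same rank-based identification of the empirical threshold $\hat t$, the same $n^{-1/2}$ concentration for $\hat t$, the same boundary strip of width $\asymp n^{-1/d}$, and the same claim that the nearest sample lands on the wrong side with probability bounded away from zero.

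There is one small logical slip in your sketch. You want the event $\{X_{i^*(x),1}\leq \hat t\}$, but your proposed split $X^L=\{X_i:X_{i,1}<0\}$ does not guarantee this on $\cE$: when $\hat t<0$ (which happens with probability close to $1/2$) a nearest neighbor in $X^L$ may still have first coordinate in $(\hat t,0)$ and hence be assigned to $y_2$. The fix is immediate---split at $-n^{-1/2}\log n$ instead of $0$, so that on $\cE$ every point of $X^L$ is certainly mapped to $y_1$; since $n^{-1/2}\log n=o(n^{-1/d})$ for $d\geq 3$, the void-probability estimates are unaffected. The paper sidesteps this by working directly with the shifted halfspace $H_t=\{x_1\leq \nu_0+t\}$ (for $t\asymp n^{-1/2}$), on whose complement every sample is certainly misassigned once $\tau<\nu_0+t$, and then computes the exact probability that a ball $B$ of radius $\asymp n^{-1/d}$ around $x$ is empty in $V=B\cap H_t$ but not in $B\cap H_t^c$, namely $(1-\mathrm{vol}(V))^n-(1-\mathrm{vol}(B))^n$. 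This avoids conditioning on $|X^L|,|X^R|$ and the Poisson heuristic, but is otherwise the same argument as yours.
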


\subsection{Experiments}\label{sec: experiments}
We briefly verify our theoretical findings on synthetic experiments. To create the following plots, we draw two sets of $n$ i.i.d. points from $P$, $(X_1,\ldots,X_n)$ and $(X_1',\ldots,X_n')$, and create target points $Y_i = T_0(X_i')$, where $T_0$ is known to us in advance in order to generate the data. Our estimators are computed on the data $(X_1,\ldots,X_n)$ and $(Y_1,\ldots,Y_n)$, and we evaluate the Mean-Squared error criterion 
$$\text{MSE}(\hat{T}) = \| \hat{T}- T_0\|_{L^2(P)}^2$$
 of a given map estimator $\hat{T}$ using Monte Carlo integration, using 50000 newly sampled points from $P$. We plot the means across 10 repeated trials, accompanied by their standard deviations. 
 
\subsubsection{Semi-discrete example \#1}
First consider $P = \text{Unif}([0,1]^d)$ and create atoms $\{y_1,\ldots,y_J\}$ by partitioning the points along the first coordinate for all $j \in [J]$:
\begin{align*}
    (y_j)[1] = \frac{(j - 1/2)}{J} \,, \quad (y_j)[2] = \cdots = (y_j)[d] = 0.5\,.
\end{align*}
We choose uniform $q_j = 1/J$ for $j \in [J]$. In this case, it is easy to see that the optimal transport map $T_0(x)$ is uniquely defined by the first coordinate of $x_1$. \Cref{fig: semidiscrete_J2} illustrates the rate-optimal performance of the entropic Brenier map, and the proveably suboptimal performance of the 1-Nearest-Neighbor estimator.

\begin{figure*}[t!]
	\center
\includegraphics[width=0.4\textwidth]{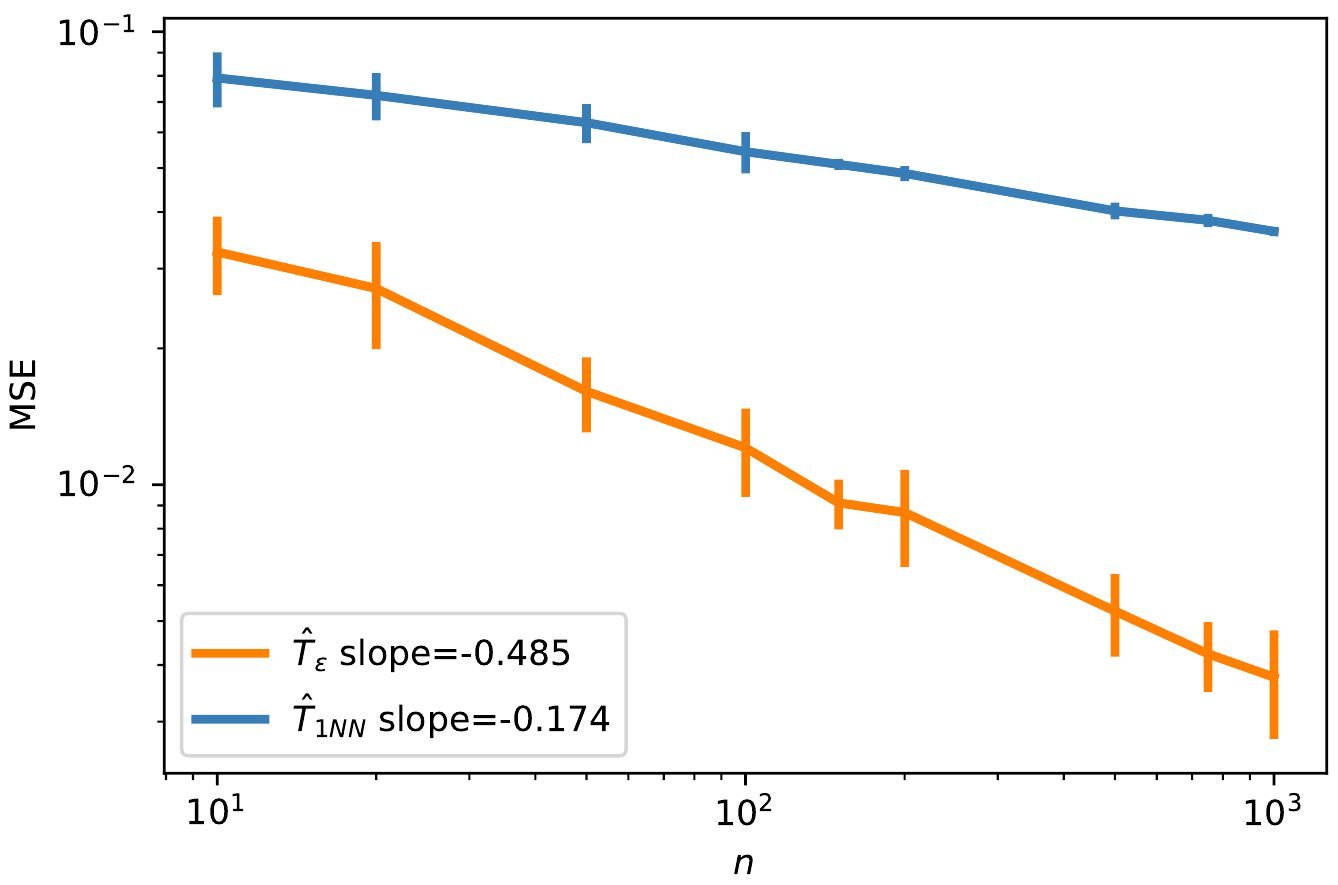} \qquad\qquad
\includegraphics[width=0.4\textwidth]{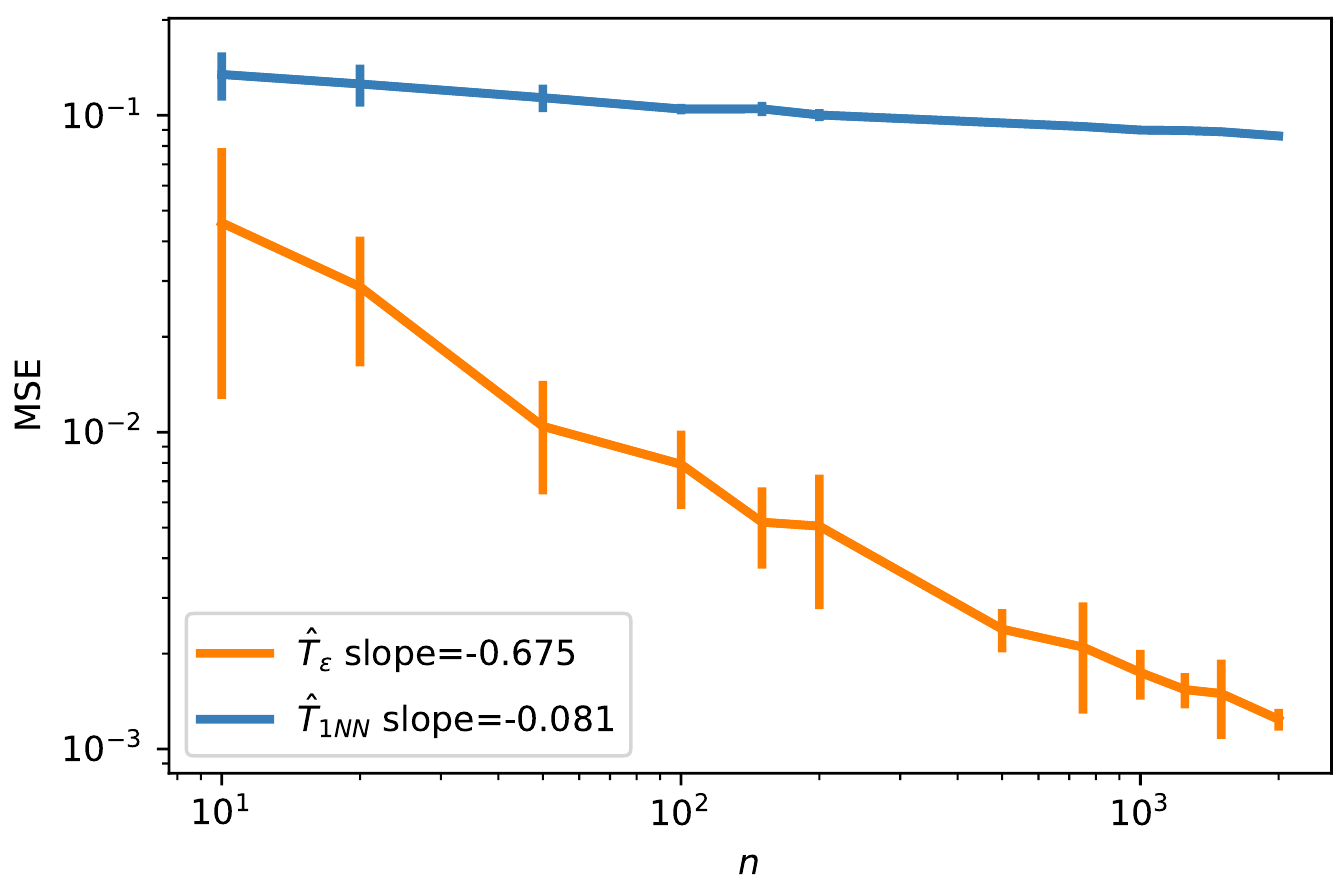}
\vskip -0.15in
\caption{Left: $\hat{T}_{\eps}$ versus $\hat{T}_{\text{1NN}}$ for $J = 2$ and $d=10$. Right: $\hat{T}_{\eps}$ versus $\hat{T}_{\text{1NN}}$ for $J = 10$ and $d=50$.}
\label{fig: semidiscrete_J2}
 \end{figure*}

\subsubsection{{Semi-discrete example \#2}}
We now consider a synthetic experiment with far less symmetry. Let $P = \text{Unif}([0,1]^d)$, and fix $J \in \NN$. We randomly generate $y_1,\ldots,Y_J \in [0,1]^d$, and also randomly generate $\psi_0 \in \R^J$, and consider the optimal transport map $T_0(x) = \argmin_{j \in [J]}\{x^\top y_j - (\psi_0)_j\}$. We define $Q = (T_0)_\sharp P$, leading to the same setup as before, but with a less structured optimal transport map. We consider $J = 5$ and $d=50$, and repeat the procedure of the preceding section to generate our data, and the resulting estimator. \cref{fig: random_d50} contains plots the MSE as a function of $n$, where again we see a log-linear slope of around $-0.5$, which agrees with our theory.

\begin{figure}[ht]
\begin{center}
\centerline{\includegraphics[width=0.55\columnwidth]{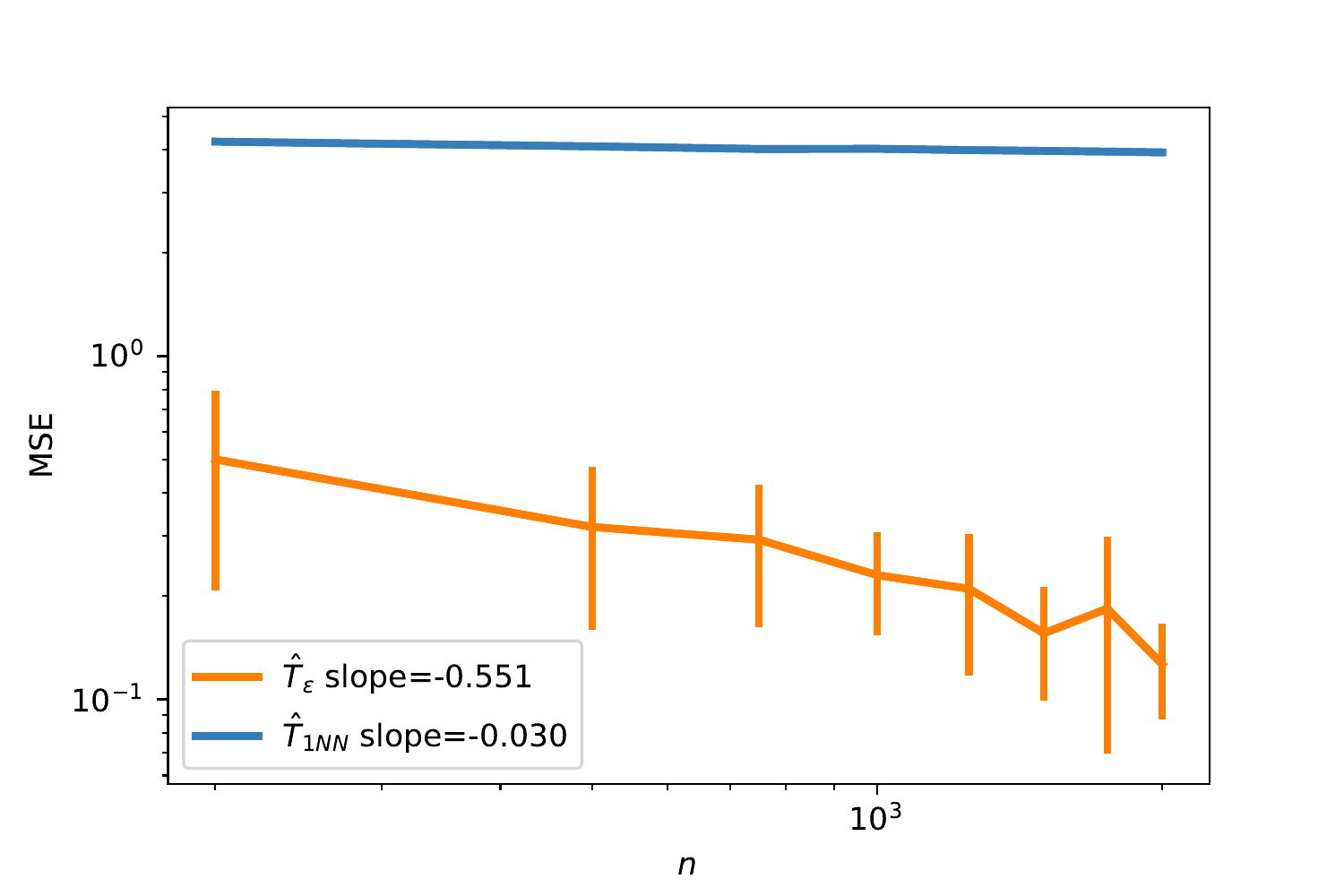}}
\vskip -0.2in
\caption{$\hat{T}_{\eps}$ versus $\hat{T}_{\text{1NN}}$ for with $\psi_0$ random in $d=50$}
\label{fig: random_d50}
\end{center}
\end{figure}

\subsubsection{Discontinuous example}
We turn our attention to a discontinuous transport map, where for $x \in \R^d$, all the coordinates are fixed except for the first one
\begin{align*}
    T_0(x) = 2\text{sign}(x[1]) \otimes x[2] \otimes \cdots  \otimes x[d]\,.
\end{align*} 
We choose $P = \text{Unif}([-1,1]^d)$ to exhibit a discontinuity in the data. Focusing on $d=10$, we see in \Cref{fig: splitting_d10} that the entropic map estimator avoids the curse of dimensionality and enjoys a faster convergence rate, with better constants.
\begin{figure}[ht]
\begin{center}
\centerline{\includegraphics[width=0.55\columnwidth]{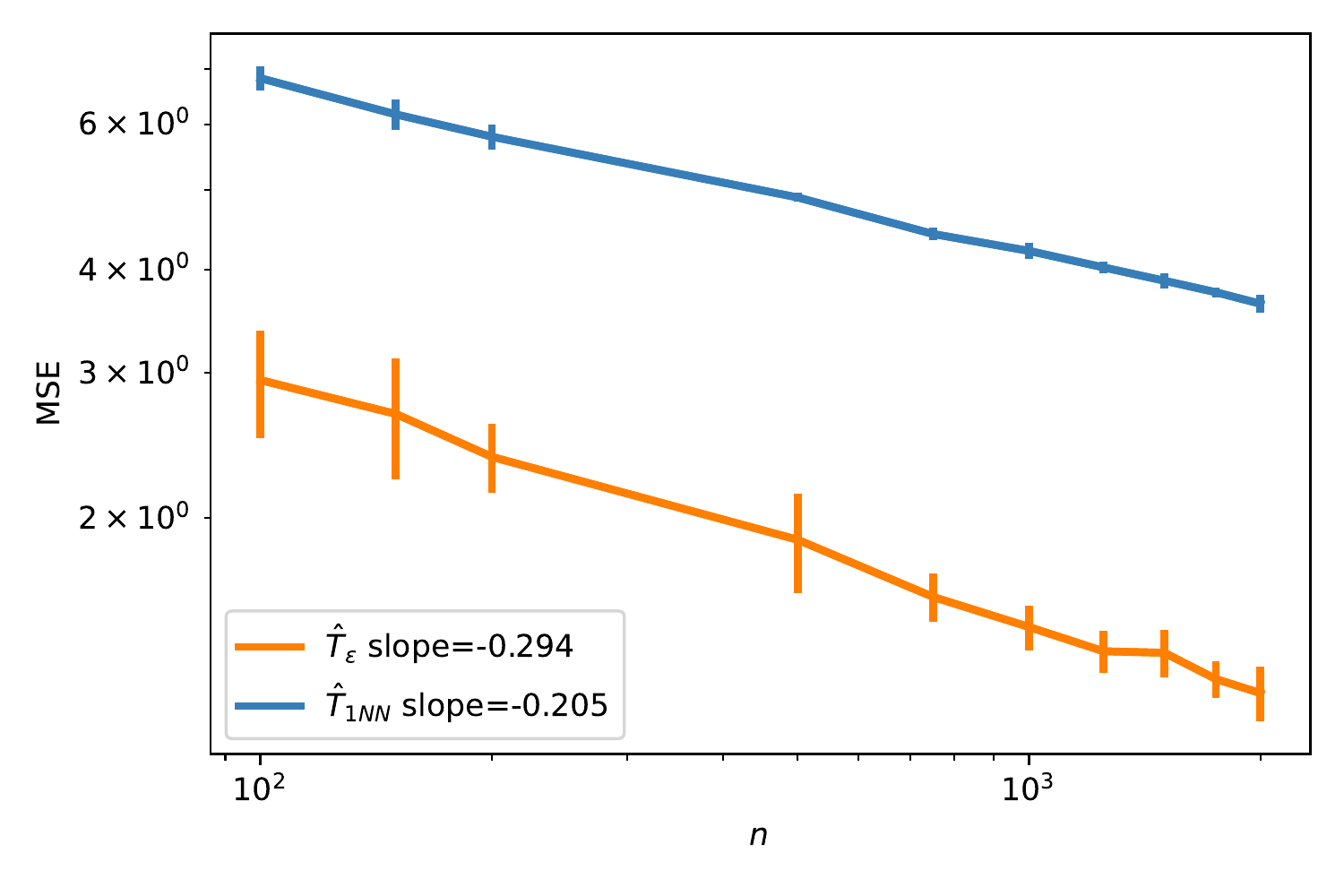}}
\vskip -0.2in
\caption{$\hat{T}_{\eps}$ versus $\hat{T}_{\text{1NN}}$ for $d=10$}
\label{fig: splitting_d10}
\end{center}
\end{figure}

\section{Conclusion}
Understanding optimal transport maps in the semi-discrete case is a natural stepping-stone to understanding the case for general discontinuous transport maps. In this work, we propose a tractable, minimax optimal estimator of the Brenier map in the semi-discrete setting, where the rate of estimation is dimension independent.
To prove our result, we require several new results and techniques, and, as a by-product of our analysis, give the first parametric rates of estimation the entropic Brenier map, without exponential dependence in the regularization parameter. Our synthetic experiments indicate that the entropic Brenier map might be useful in estimating other variants of discontinuous transport maps, which constitutes  an interesting direction for future research.

\section*{Acknowledgements}
AAP would like to thank Tudor Manole for fruitful discussions, and gratefully thanks funding sources NSF Award 1922658, and Meta AI Research. JNW is supported by the Sloan Research Fellowship and NSF grant DMS-2210583. We thank the anonymous reviewer for suggesting the addition of a rounding scheme.

\newpage
\appendix
\onecolumn

\section{Reminders on semi-discrete entropic optimal transport}\label{sec:reminder}
We recall in this  section some known results on entropic optimal transport that will be needed later. Let $\mu,\nu\in \cP(\Omega)$, where $\Omega\subset B(0;R)$ is a compact set.

\begin{lemma}[\citealp{genevay2019sample}]\label{lem:bounded_amplitude}
    The entropic potential $(\phi_\eps^{\mu\to\nu},\psi_\eps^{\mu\to\nu})$ have a bounded amplitude, in the sense that
    \begin{equation}
        \max_{x\in \Omega} \phi_\eps^{\mu\to\nu} - \min_{x\in \Omega} \phi_\eps^{\mu\to \nu} \leq cR
    \end{equation}
    for some absolute constant $c$, and similarly for $\psi_\eps^{\mu\to\nu}$.
\end{lemma}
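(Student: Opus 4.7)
The plan is to exploit the soft-Legendre fixed-point identity \eqref{eq: opt_cond_1}, which expresses $\phi_\eps^{\mu\to\nu}$ as a log-integral against $\nu$ of the family $y\mapsto \exp((\langle x,y\rangle-\psi_\eps^{\mu\to\nu}(y))/\eps)$. The only place $x$ enters this expression is through the linear term $\langle x,y\rangle/\eps$, so the $x$-dependence can be peeled off cleanly: for any $x,x'\in\Omega$ one has
\begin{equation*}
\phi_\eps^{\mu\to\nu}(x)-\phi_\eps^{\mu\to\nu}(x')
=\eps\log\frac{\int e^{(\langle x,y\rangle-\psi_\eps^{\mu\to\nu}(y))/\eps}\dd\nu(y)}{\int e^{(\langle x',y\rangle-\psi_\eps^{\mu\to\nu}(y))/\eps}\dd\nu(y)}.
\end{equation*}

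The first step is to bound the integrand pointwise. Since $\|y\|\leq R$ and $\|x-x'\|\leq 2R$ (both lie in $B(0;R)$), Cauchy--Schwarz gives $|\langle x-x',y\rangle|\leq 2R^2$, hence
\begin{equation*}
e^{-2R^2/\eps}\cdot e^{(\langle x',y\rangle-\psi_\eps^{\mu\to\nu}(y))/\eps}\;\leq\; e^{(\langle x,y\rangle-\psi_\eps^{\mu\to\nu}(y))/\eps}\;\leq\; e^{2R^2/\eps}\cdot e^{(\langle x',y\rangle-\psi_\eps^{\mu\to\nu}(y))/\eps}.
\end{equation*}
Integrating against $\nu$ and taking logs (multiplied by $\eps$) immediately yields $|\phi_\eps^{\mu\to\nu}(x)-\phi_\eps^{\mu\to\nu}(x')|\leq 2R^2$, which controls the oscillation by an absolute constant times $R^2$ (absorbed into $c$ in the paper's statement). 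The same argument, swapping the roles of $\mu$ and $\nu$ and using \eqref{eq: opt_cond_2}, handles $\psi_\eps^{\mu\to\nu}$.

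There is no real obstacle: the only subtlety is to note that this pointwise-$x$ argument requires no assumption on $\nu$ beyond its support being contained in $B(0;R)$, and in particular the versions of $\phi_\eps^{\mu\to\nu}$ and $\psi_\eps^{\mu\to\nu}$ used must be those defined through the soft-Legendre relations \eqref{eq: opt_cond_1}--\eqref{eq: opt_cond_2}, which is exactly the convention adopted right after those equations in the paper. Uniqueness of the potentials only holds up to an additive constant, but since the claim is about the oscillation $\max-\min$, this constant cancels and the bound applies to every admissible choice of potentials.
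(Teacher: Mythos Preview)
The paper does not supply its own proof of this lemma; it is simply quoted from \cite{genevay2019sample} as a known fact. Your argument is correct and is essentially the standard one: the soft-Legendre identity \eqref{eq: opt_cond_1} makes the $x$-dependence enter only through $\langle x,y\rangle$, and since $\|y\|\le R$ on $\supp(\nu)$ the potential $\phi_\eps^{\mu\to\nu}$ is $R$-Lipschitz on $\Omega$ (equivalently, $\nabla\phi_\eps(x)=\int y\,\dd\pi_\eps^x(y)\in B(0;R)$); combined with $\mathrm{diam}(\Omega)\le 2R$ this gives oscillation $\le 2R^2$.

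One minor point worth flagging: the bound you obtain is $2R^2$, not $cR$ with $c$ absolute, so strictly speaking it does not match the exponent on $R$ in the statement as written. This is almost certainly an imprecision in the paper's statement rather than a defect in your proof---the natural bound from the inner product is quadratic in $R$, the cited source also gives an $R^2$-type bound, and every downstream use of the lemma in the paper only needs that the amplitude is controlled by a constant depending on $R$.
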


Assume now that $\nu= \sum_{j=1}^J \nu_j\delta_{y_j}$ is a discrete measure. In this situation, only the values of the dual potential $\psi_\eps^{\mu\to\nu}$ on the points $y_1,\dots,y_J$ are relevant. We therefore consider $\psi_\eps^{\mu\to\nu}$ as a vector in $\R^J$. The potentials $\phi_\eps^{\mu\to\nu}$ and $\psi_\eps^{\mu\to\nu}$ are dual of one another, in the sense of the $\eps$-Legendre transform. Given a finite measure $\rho$, the $\eps$-Legendre transform of a function $h$ with respect to $\rho$ is given by
\begin{equation}
   \Phi_\eps^\rho( h)(x) = \eps \log \int e^{(\langle x,y\rangle -h(x))/\eps} \dd \rho(x).
\end{equation}
Relations \eqref{eq: opt_cond_1} and \eqref{eq: opt_cond_2} express that $\phi_\eps^{\mu\to\nu} = \Phi_\eps^\nu(\psi_\eps^{\mu\to\nu})$ and vice-versa.  In the semi-discrete setting, it is also convenient to introduce the $\eps$-Legendre transform with respect to the counting measure $\sigma$ on $\{y_1,\dots,y_J\}$. For a vector $\psi\in\R^J$, we have
\begin{equation}\label{eq:shifted_potential}
    \Phi_\eps(\psi)(x) \defeq \Phi_\eps^\sigma(\psi)(x) = \eps\log \sum e^{(\langle x,y_j\rangle -\psi(y_j))/\eps}.
\end{equation}
The $\Phi_\eps$ transform and the $\Phi_\eps^\nu$ transform are linked through the relation
\begin{equation}
  \Phi_\eps^\nu(\psi) =\Phi_\eps(\tilde\psi) \qquad \text{ where }\qquad \tilde \psi(y_j) = \psi(y_j)-\eps \log \nu_j\,,
\end{equation}
where we call $\tilde\psi$ a \emph{shifted} potential. With this notation, the optimality condition on the potentials can be rephrased. Let 
\begin{equation}
    F_\eps^{\mu\to\nu}:\psi\in \R^J\to \int \Phi_\eps(\psi) \dd \mu + \int \psi \dd \nu\,.
\end{equation}
Then, the function $F_\eps^{\mu\to\nu}$ is minimized at $\tilde \psi_\eps^{\mu\to\nu}$. 
For $\psi\in \R^J$ and $x\in \R^d$, we introduce the probability measure supported on $\{y_1,\dots,y_J\}$ given by
\begin{equation}\label{eq:def_pi_eps_x}
 \forall i\in [J],\quad   \pi_\eps^x[\psi](y_i) = \frac{e^{(\langle x,y_i\rangle -\psi(y_i))/\eps}}{\sum_{j=1}^Je^{(\langle x,y_j\rangle -\psi(y_j))/\eps}} = e^{(\langle x,y_i\rangle -\Phi_\eps(\psi)(x)-\psi(y_i))/\eps}.
\end{equation}
A computation gives $\nabla F_\eps^{\mu\to\nu}(\psi) = \int    \pi^x_\eps[\psi]\dd \mu(x) - \nu$, so that at optimality, we have
\begin{equation}\label{eq:desintegration}
    \int    \pi^x_\eps[\tilde\psi_\eps^{\mu\to\nu}]\dd \mu(x) = \nu.
\end{equation}
In this case, $\pi^x_\eps=\pi^x_\eps[\tilde \psi_\eps^{\mu\to\nu}]$ is the conditional  distribution of the second marginal of $\pi_\eps$ given that the first is equal to $x$, as in \Cref{sec:entropic_brenier_map}. More generally, for any potential $\psi$, the first order condition implies that $\psi$ is equal to $\tilde \psi_\eps^{\mu\to\nu_\psi}$, the optimal dual potential between $\mu$ an $\nu_\psi = \int    \pi^x_\eps[\psi]\dd \mu(x)$. 

\section{Bound on the approximation error}\label{sec:approx}

\begin{proof}[Proof of \Cref{thm: mapest_semidiscrete_population}]
Let $i,j\in[J]$. We define the $j$th slack at $x\in L_i$ by
\begin{equation}
    \frac 12 \Delta_{ij}(x)  = -\langle x,y_j\rangle +\phi_0(x) +\psi_0(y_j).
\end{equation}
As $\phi_0$ is the Legendre transform of $\psi_0$, we have $\Delta_{ij}(x)\geq 0$. If the cells $L_i$ and $L_j$ have a nonempty intersection, the set $H_{ij}(t) = \{x\in L_i:\ \Delta_{ij}(x)=t\}$ represents the trace on $L_i$ of the hyperplane spanned by the boundary between $L_i$ and $L_j$, shifted by $t$. 
It is stated in \cite{altschuler2021asymptotics} that for every nonnegative measurable function $f:\R\to {\R_+}$,
\begin{equation}
    \int_{L_i} f(\Delta_{ij}(x)) p(x) \dd x = \frac{1}{2\|y_i-y_j\|} \int_0^\infty f(t) h_{ij}(t) \dd t,
\end{equation}
where $h_{ij}(t) =\int_{H_{ij}(t)} p(x) \dd \cH_{d-1}(x)$ and $ \cH_{d-1}$ is the $(d-1)$-dimensional Hausdorff measure. In particular, $w_{ij}=h_{ij}(0)$ is the (weighted) surface of the boundary between the $i^{\text{th}}$ and $j^{\text{th}}$ Laguerre cells (should it exist). Given $x\in L_i$, let $s(x) = \min_{j\neq i} \frac 12 \Delta_{ij}(x)$. When the point $x$ is sufficiently inside its Laguerre cell, the conditional probability $\pi_\eps^x$ becomes extremely concentrated around the point $y_i$, as the next lemma shows. Note that $\pi_0^x = \delta_{y_i}$ when $x\in L_i$.

\begin{lemma}
    Let $x \in L_i$. For $\eps$ small enough, it holds that for every $j\in [J]$, $|\pi_\eps^x(y_j)-\pi_0^x(y_j)|\leq ce^{-s(x)/\eps}$, where $c$ depends on $J$, the distances $\|y_i-y_j\|$ and on the quantities $w_{ij}$.
\end{lemma}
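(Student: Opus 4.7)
The plan is to exploit the explicit formula for the conditional measure $\pi_\eps^x$ and to account for the discrepancy between $\psi_\eps$ and $\psi_0$ through a non-asymptotic stability bound on the potentials. Starting from \eqref{eq:cond_density}, I would write
\begin{equation*}
    \pi_\eps^x(y_j) = \frac{q_j\, e^{(\langle x,y_j\rangle - \psi_\eps(y_j))/\eps}}{\sum_k q_k\, e^{(\langle x,y_k\rangle - \psi_\eps(y_k))/\eps}},
\end{equation*}
then divide numerator and denominator by the $k=i$ term and use the identity $\phi_0(x)=\langle x,y_i\rangle -\psi_0(y_i)$ (valid on $L_i$) to obtain the decomposition
\begin{equation*}
    \pi_\eps^x(y_j) = \frac{q_j\, e^{-A_{ij}^\eps/\eps}}{q_i + \sum_{k\neq i} q_k\, e^{-A_{ik}^\eps/\eps}},\qquad A_{ij}^\eps \defeq \tfrac12\Delta_{ij}(x) + \eta_{ij}^\eps,
\end{equation*}
where $\eta_{ij}^\eps \defeq (\psi_\eps(y_j)-\psi_0(y_j))-(\psi_\eps(y_i)-\psi_0(y_i))$ captures the shift caused by entropic regularization, and $A_{ii}^\eps=0$ by construction.

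The next step would be to invoke the non-asymptotic convergence of the semi-discrete entropic potentials established in \cite{altschuler2021asymptotics} (and sharpened in \cite{delalande2021nearly}): for $\eps$ small enough one has $\|\psi_\eps-\psi_0\|_\infty\leq C_0\eps$, where $C_0$ depends explicitly on $J$, the distances $\|y_i-y_j\|$ and the boundary weights $w_{ij}$. This yields $|\eta_{ij}^\eps|\leq 2C_0\eps$, hence for $j\neq i$,
\begin{equation*}
    e^{-A_{ij}^\eps/\eps}\leq e^{2C_0}\, e^{-\Delta_{ij}(x)/(2\eps)}\leq e^{2C_0}\, e^{-s(x)/\eps}.
\end{equation*}
Lower-bounding the denominator by $q_i$ (using the $k=i$ summand) would then give $\pi_\eps^x(y_j)\leq (q_j/q_i)\, e^{2C_0}\, e^{-s(x)/\eps}$ for $j\neq i$. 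For $j=i$, I would conclude by probability normalization, $|\pi_\eps^x(y_i)-1|=\sum_{k\neq i}\pi_\eps^x(y_k)$, which costs only a factor of $J-1$ in the final constant.

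The hard part will be securing the linear-in-$\eps$ bound $\|\psi_\eps-\psi_0\|_\infty\lesssim\eps$ with the dependence on the boundary weights $w_{ij}$ promised in the statement: a weaker $O(\eps\log(1/\eps))$ rate would introduce a polynomial factor $(1/\eps)^{O(1)}$ in the exponential that cannot be absorbed into an $\eps$-independent constant. Fortunately, this linear rate is exactly what the asymptotic expansion of the dual potentials in \cite{altschuler2021asymptotics} provides, and the geometric constants appearing there are controlled precisely by the surface quantities $w_{ij}$, matching the constants advertised in the lemma. Once that stability result is imported, the remaining steps---choosing $\eps$ small enough for it to hold uniformly, and absorbing the factors $q_{\min}^{-1}$, $q_{\max}$ and $J$ into $c$---are routine.
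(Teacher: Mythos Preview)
Your proposal is correct and follows essentially the same route as the paper: both arguments write $\pi_\eps^x(y_j)$ as a Gibbs weight, invoke the $O(\eps)$ convergence of the semi-discrete dual potentials from \citep[Proposition~4.6]{altschuler2021asymptotics}, and then bound the resulting ratio by $e^{-s(x)/\eps}$ up to a multiplicative constant, handling $j=i$ by normalization. The only cosmetic difference is that the paper works with the \emph{shifted} potential $\tilde\psi_\eps(y_j)=\psi_\eps(y_j)-\eps\log q_j$ (for which the Altschuler--Niles-Weed--Stromme bound is stated directly), whereas you keep the weights $q_j$ explicit and absorb the extra $\eps\log(q_j/q_i)$ terms into your constant $C_0$; since only the differences $\eta_{ij}^\eps$ enter your argument, this is harmless.
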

Such a result was already stated in \citep[Corollary 2.2]{delalande2021nearly}, although while requiring that the source measure $P$ has a H\"older continuous density. Only assumption \textbf{(A)} is needed here.

\begin{proof}
According to
\citep[Proposition 4.6]{altschuler2021asymptotics}, for $\eps$ small enough, 
\begin{equation}\label{eq:altchuler_lemma}
    \eps^{-1}\|\tilde \psi_\eps-\psi_0\|_\infty\leq C,
\end{equation}
where $\tilde\psi_\eps$ is the shifted version of $\psi_\eps$ (see \eqref{eq:shifted_potential}) and $C$ depends on  the distances $\|y_i-y_j\|$ and on the $w_{ij}$s. 
Following \citep[Proof of Corollary 2.2]{delalande2021nearly} and \eqref{eq:def_pi_eps_x},  we have for $j\neq i$ 
\begin{align*}
    |\pi_\eps^x(y_j)-\pi_0^x(y_j)| &= \pi_\eps^x(y_j) =  \frac{e^{(\langle  x,y_j \rangle -\tilde \psi_{\eps}(y_j))/\eps}}{\sum_{j'=1}^Je^{(\langle x,y_{j'}\rangle -\tilde \psi_{\eps}(y_{j'}))/\eps}} \leq e^{2C}\frac{e^{(\langle  x,y_j \rangle - \psi_{0}(y_j))/\eps}}{\sum_{j'=1}^Je^{(\langle x,y_{j'}\rangle -\psi_{0}(y_{j'}))/\eps}} \leq e^{2C} e^{-s(x)/\eps}.
\end{align*}
A similar computation yields that $|\pi_\eps^x(y_i)-\pi_0^x(y_i)|=|\pi_\eps^x(y_i)-1|\leq Je^{2C}e^{-s(x)/\eps}$. \qedhere
    
\end{proof}
We can bound for any $x\in L_i$,
\begin{equation}
    \|T_\eps(x) - T_0(x)\| = \| \sum_{j=1}^J y_j (\pi_\eps^x(y_j)-\pi_0^x(y_j))\|\leq  c\sum_{j=1}^J\|y_j\| e^{-s(x)/\eps}.
\end{equation}
Therefore, letting $C'$ denote a constant, which may depend on $J$, whose value may change from line to line, we obtain
\begin{align}
    \|T_\eps-T_0\|^2_{L_2(P)} &= \sum_{i=1}^J \int_{L_i} \|T_\eps(x) - T_0(x)\|^2\dd P(x) \leq C' \sum_{i=1}^J \int_{L_i} \sum_{j = 1}^J e^{-2s(x)/\eps} \dd P(x) \\
    &\leq C' \sum_{i\neq j} \int_{L_i} e^{-\Delta_{ij}(x)/\eps} \dd P(x) \leq C' \sum_{i \neq j} \frac{1}{2\|y_i-y_j\|} \int_0^\infty e^{-t/\eps} h_{ij}(t)\dd t\,,\label{eq:last_approx}
\end{align}
where in the second equality, we used the definition of $s(x)$. Assumption \textbf{(A)} ensures that the functions  $h_{ij}$s are bounded, which implies that the right-hand side in \eqref{eq:last_approx} is of order $\eps$.
\qedhere
\end{proof}

\section{Stability of entropic transport plans}\label{app: stability}

\begin{proof}[Proof of \Cref{prop: entmap_stability}]
Note that we may assume without loss of generality that $\nu\ll \nu'$ and that $\kl{\nu}{\nu'}<\infty$, for otherwise the bound is vacuous. For notational convenience, we omit the dependence on $\eps$ in the subscripts.
Write $\pi^{\mu, \nu} = \gamma^{\mu, \nu}(x, y) \rd \mu(x) \rd \nu(y)$ for the entropic optimal plan between $\mu$ and $\nu$, where $\gamma^{\mu, \nu} = \exp\left(\frac 1 \eps(\langle x, y \rangle - \phi^{\mu \to \nu}(x) - \psi^{\mu \to \nu}(y))\right)$, and analogously define $\gamma^{\mu', \nu'} =  \exp\left(\frac 1 \eps(\langle x, y \rangle - \phi^{\mu' \to \nu'}(x) - \psi^{\mu' \to \nu'}(y))\right)$.

Consider the measure $\gamma^{\mu', \nu'}(x, y) \dd \mu(x) \dd \nu'(y)$.
The first-order optimality condition for $(\phi^{\mu' \to \nu'},  \psi^{\mu' \to \nu'})$ implies that
\begin{equation}
\int \gamma^{\mu', \nu'}(y) \dd \nu'(y) = 1 \quad \forall x \in \Omega\,,
\end{equation}
so that $\gamma^{\mu', \nu'}(x, y) \dd\nu'(y)$ is a probability measure.
Let us write $\rd \pi^x(y) = \gamma^{\mu, \nu}(x, y) \dd\nu(y)$ and $\rd \rho^x(y) = \gamma^{\mu', \nu'}(x, y) \dd\nu'(y)$.

We make the following observations: first, $T^{\mu\to \nu}(x) = \int y \dd \pi^x(y)$ and $T^{\mu'\to \nu'}(x) = \int y \dd \rho^x(y)$.
Second, the support of $\rho^x$ lies inside $B(0; R)$; since any Lipschitz function $f$ on $B(0; R)$ satisfies $\sup_x f(x) - \inf_x f(x) \leq 2R$, Hoeffding's lemma~\citep[see][Lemma 2.2]{BouLugMas13} implies that if $f$ is Lipschitz and $\int f \dd \rho^x = 0$, then
\begin{equation*}
\int e^{t f} \dd \rho^x \leq e^{2 R^2 t^2} \quad \forall t \in \RR\,.
\end{equation*}
This implies \citep[Theorem 3.1]{BobGot99} that
\begin{equation}
W_1(\pi^x, \rho^x)^2 \leq 8 R^2 \kl{\pi^x}{\rho^x}\,.
\end{equation}
Third, Jensen's inequality implies that for any coupling $\gamma$ between $\pi^x$ and $\rho^x$,
\begin{equation}
\int \|y - y'\| \dd \gamma(y, y') \geq \left\|\int (y - y') \dd \gamma(y, y') \right\| = \|T^{\mu\to\nu}(x) - T^{\mu'\to \nu'}(x)\|\,,
\end{equation}
so that in particular, $\|T^{\mu\to\nu}(x) - T^{\mu'\to\nu'}(x)\| \leq W_1(\pi^x, \rho^x)$.
Combining these facts, we obtain
\begin{equation}
\frac{1}{8R^2} \|T^{\mu\to \nu}(x) - T^{\mu'\to \nu'}(x)\|^2 \leq \kl{\pi^x}{\rho^x} = \int \log \left(\frac{\gamma^{\mu, \nu}}{\gamma^{\mu', \nu'}}(x, y)\frac{\rd \nu}{\rd \nu'}(y)\right)\gamma^{\mu, \nu}(x, y) \dd\nu(y) \,.
\end{equation}
Integrating both sides of this equation with respect to $\mu$ yields
\begin{equation}
\frac{1}{8R^2} \|T^{\mu\to \nu}(x) - T^{\mu'\to \nu'}(x)\|^2_{L^2(\mu)} \leq \int \log \left(\frac{\gamma^{\mu, \nu}}{\gamma^{\mu', \nu'}}(x, y)\frac{\rd \nu}{\rd \nu'}(y)\right) \dd \pi^{\mu, \nu}(x, y)\,.
\end{equation}
Expanding the definition of $\gamma^{\mu, \nu}$ and $\gamma^{\mu', \nu'}$ and using that $$\int \log \frac{\rd \nu}{\rd \nu'}(y) \dd \pi^{\mu, \nu}(x, y) = \int \log \frac{\rd \nu}{\rd \nu'}(y) \dd \nu(y) = \kl{\nu}{\nu'}$$ yields the claim. 
\end{proof}
	We now record two corollaries of this bound, which apply when either the source or the target measures of the entropic maps agree.
\begin{corollary}\label{cor:same_mu}
	For any $\mu, \nu, \nu'$ supported in $B(0; R)$,
	\begin{equation}
	\frac{1}{8 R^2} \|T^{\mu \to \nu}_\ep - T^{\mu \to \nu'}_\ep\|_{L^2(\mu)}^2 \leq \ep^{-1} \int (\psi_\eps^{\mu \to \nu'} - \psi_\eps^{\mu \to \nu}) \dd(\nu - \nu') + \kl{\nu}{\nu'}
	\end{equation}
\end{corollary}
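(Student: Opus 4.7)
The plan is to specialize Proposition C.1 (taking $\mu' = \mu$) and then absorb the $\varphi$--difference on the right-hand side using the variational characterization of the entropic dual. Starting from Proposition C.1 with $\mu' = \mu$, we immediately obtain
\begin{equation*}
\frac{\eps}{8 R^2}\|T_\eps^{\mu \to \nu} - T_\eps^{\mu \to \nu'}\|_{L^2(\mu)}^2 \leq \int (\varphi_\eps^{\mu\to \nu'} - \varphi_\eps^{\mu\to \nu}) \dd \mu + \int (\psi_\eps^{\mu\to \nu'} - \psi_\eps^{\mu\to \nu}) \dd \nu + \eps \kl{\nu}{\nu'}.
\end{equation*}
Writing the middle term as $\int(\psi_\eps^{\mu\to \nu'} - \psi_\eps^{\mu\to \nu})\dd(\nu-\nu') + \int(\psi_\eps^{\mu\to \nu'} - \psi_\eps^{\mu\to \nu})\dd\nu'$ and dividing by $\eps$, it will suffice to establish
\begin{equation*}
\int (\varphi_\eps^{\mu\to \nu'} - \varphi_\eps^{\mu\to \nu}) \dd \mu + \int (\psi_\eps^{\mu\to \nu'} - \psi_\eps^{\mu\to \nu}) \dd \nu' \leq 0.
\end{equation*}

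To prove this inequality, I would use the unconstrained dual formulation \eqref{eq: kant_eot_d} for the marginals $(\mu,\nu')$. By definition the pair $(\varphi_\eps^{\mu \to \nu'}, \psi_\eps^{\mu \to \nu'})$ minimizes
\begin{equation*}
F(\varphi,\psi) \defeq \int \varphi \dd\mu + \int \psi \dd\nu' + \eps \iint \bigl(e^{(\langle x,y\rangle - \varphi(x) - \psi(y))/\eps} - 1\bigr)\dd\mu(x)\dd\nu'(y),
\end{equation*}
and at the optimum the exponential term vanishes, so $F(\varphi_\eps^{\mu\to\nu'},\psi_\eps^{\mu\to\nu'}) = \int \varphi_\eps^{\mu\to\nu'}\dd\mu + \int \psi_\eps^{\mu\to\nu'}\dd\nu'$. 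Comparing with the value of $F$ at the suboptimal candidate $(\varphi_\eps^{\mu\to\nu},\psi_\eps^{\mu\to\nu})$ (which is admissible since it lies in $L_1(\mu)\times L_1(\nu')$ under assumption (A)--(B)) gives the desired inequality, provided the exponential term evaluated at this candidate vanishes.

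The crux --- and, I expect, the only subtle step --- is showing that
\begin{equation*}
\iint e^{(\langle x,y\rangle - \varphi_\eps^{\mu\to\nu}(x) - \psi_\eps^{\mu\to\nu}(y))/\eps} \dd\mu(x)\dd\nu'(y) = 1.
\end{equation*}
This will follow from the canonical extension of $\psi_\eps^{\mu\to\nu}$ via \eqref{eq: opt_cond_2}: that extension guarantees $\int e^{(\langle x,y\rangle - \varphi_\eps^{\mu\to\nu}(x) - \psi_\eps^{\mu\to\nu}(y))/\eps}\dd\mu(x) = 1$ for \emph{every} $y \in \R^d$, not merely $\nu$-almost every $y$. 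Applying Fubini and integrating this identity against $\nu'$ yields the claim, regardless of whether $\nu' \ll \nu$. Combining this key observation with $F(\varphi_\eps^{\mu\to\nu'},\psi_\eps^{\mu\to\nu'}) \leq F(\varphi_\eps^{\mu\to\nu},\psi_\eps^{\mu\to\nu})$ closes the argument. (Without loss of generality we may assume $\kl{\nu}{\nu'} < \infty$, i.e., $\nu \ll \nu'$, since otherwise the stated inequality is vacuous.)
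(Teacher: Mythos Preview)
Your proposal is correct and follows essentially the same route as the paper: specialize Proposition~\ref{prop: entmap_stability} with $\mu'=\mu$, then use optimality of $(\varphi_\eps^{\mu\to\nu'},\psi_\eps^{\mu\to\nu'})$ in the dual~\eqref{eq: kant_eot_d} together with the pointwise identity~\eqref{eq: opt_cond_2} to kill the exponential remainder at the competitor $(\varphi_\eps^{\mu\to\nu},\psi_\eps^{\mu\to\nu})$. One small remark: the admissibility of the competitor pair does not require Assumptions \textbf{(A)}--\textbf{(B)}; compact support in $B(0;R)$ alone ensures the potentials are bounded (Lemma~\ref{lem:bounded_amplitude}) and hence integrable.
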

\begin{proof}
We apply \cref{prop: entmap_stability} with $\mu = \mu'$, which yields (once again omitting the dependency in $\eps$)
\begin{equation}\label{same_mu}
\frac{1}{8 R^2} \|T^{\mu \to \nu}_\ep - T^{\mu \to \nu'}_\ep\|_{L^2(\mu)}^2 \leq \ep^{-1} \left(\int (\varphi^{\mu \to \nu'} - \varphi^{\mu \to \nu}) \dd \mu + \int (\psi^{\mu \to \nu'} - \psi^{\mu \to \nu}) \dd \nu\right) + \kl{\nu}{\nu'}\,.
\end{equation}
By definition, $(\varphi^{\mu\to \nu'}, \psi^{\mu\to \nu'})$ minimizes the expression $$\int \varphi \dd \mu + \int \psi \dd \nu' + \ep \iint e^{(\langle x, y \rangle - \varphi(x) - \psi(y))/\ep} \dd \mu(x) \dd \nu'(y) - \ep$$, so, recalling that $\iint e^{(\langle x, y \rangle - \varphi^{\mu\to \nu'}(x) - \psi^{\mu\to \nu'}(y))/\ep} \dd \mu(x) \dd \nu'(y) = 1$, we have in particular
\begin{align*}
\int \varphi^{\mu\to \nu'} \dd \mu + \int \psi^{\mu\to \nu'} \dd \nu' & \leq \int \varphi^{\mu\to \nu} \dd \mu + \int \psi^{\mu\to \nu} \dd \nu' + \ep \iint e^{(\langle x, y \rangle - \varphi^{\mu\to \nu}(x) - \psi^{\mu\to \nu}(y))/\ep} \dd \mu(x) \dd \nu'(y) - \ep \\
& = \int \varphi^{\mu\to \nu} \dd \mu + \int \psi^{\mu\to \nu} \dd \nu'\,,
\end{align*}
where we have used that the first-order optimality condition for $(\varphi^{\mu\to \nu}, \psi^{\mu\to \nu})$ implies that $\iint e^{(\langle x, y \rangle - \varphi^{\mu\to \nu}(x) - \psi^{\mu\to \nu}(y))/\ep} \dd \mu(x) \dd \nu'(y) = 1$ as well (see \eqref{eq: opt_cond_1}).
This implies
\begin{equation}
\int (\varphi^{\mu\to \nu'} - \varphi^{\mu\to \nu}) \dd \mu \leq - \int (\psi^{\mu\to \nu'} - \psi^{\mu\to \nu}) \dd \nu'\,.
\end{equation}
	
Applying this inequality to~\eqref{same_mu} yields
	\begin{equation*}
	\frac{1}{8 R^2} \|T^{\mu \to \nu}_\ep - T^{\mu \to \nu'}_\ep\|_{L^2(\mu)}^2 \leq \ep^{-1} \int (\psi^{\mu\to \nu'} - \psi^{\mu\to \nu}) \dd(\nu - \nu') + \kl{\nu}{\nu'}. \qedhere
	\end{equation*}
\end{proof}
\begin{corollary}\label{cor:same_nu}
	For any $\mu, \mu', \nu$ supported in $B(0; R)$,
	\begin{equation}
	\frac{1}{8 R^2} \|T^{\mu \to \nu}_\ep - T^{\mu' \to \nu}_\ep\|_{L^2(\mu)}^2 \leq \ep^{-1}\int (\varphi_\eps^{\mu'\to \nu} - \varphi_\eps^{\mu\to \nu}) \dd (\mu - \mu')\,.
	\end{equation}
\end{corollary}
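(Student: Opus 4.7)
The plan is to mirror the proof of \Cref{cor:same_mu}, starting from \Cref{prop: entmap_stability} applied with $\nu = \nu'$, which makes the $\kl{\nu}{\nu'}$ term vanish and yields
\begin{equation*}
\frac{\eps}{8 R^2}\|T_\ep^{\mu \to \nu} - T_{\ep}^{\mu' \to \nu}\|_{L^2(\mu)}^2 \leq \int (\varphi_\eps^{\mu'\to \nu} - \varphi_\eps^{\mu\to \nu}) \dd \mu + \int (\psi_\eps^{\mu'\to \nu} - \psi_\eps^{\mu\to \nu}) \dd \nu\,.
\end{equation*}
The target bound has no $\psi$ term, so the key step is to control $\int (\psi_\eps^{\mu'\to \nu} - \psi_\eps^{\mu\to \nu}) \dd \nu$ by something involving $\varphi$ and $\mu'$.

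To do this, I would use the dual first-order optimality of $(\varphi^{\mu'\to\nu}_\eps, \psi^{\mu'\to\nu}_\eps)$ for the EOT problem between $\mu'$ and $\nu$, exactly as in \Cref{cor:same_mu} but with the roles of source/target swapped. The optimal pair minimizes
\begin{equation*}
(\varphi,\psi)\mapsto \int \varphi \dd\mu' + \int \psi \dd\nu + \eps \iint e^{(\langle x,y\rangle - \varphi(x) - \psi(y))/\eps}\dd\mu'(x)\dd\nu(y) - \eps\,,
\end{equation*}
so plugging in the competitor $(\varphi^{\mu\to\nu}_\eps, \psi^{\mu\to\nu}_\eps)$ and using that the exponential integral equals $1$ at both pairs (since $\iint e^{(\langle x,y\rangle - \varphi^{\mu\to\nu}_\eps(x) - \psi^{\mu\to\nu}_\eps(y))/\eps}\dd\mu'(x)\dd\nu(y) = \int \dd\mu' = 1$ by the marginal condition \eqref{eq: opt_cond_1}, and similarly for the other pair), we obtain
\begin{equation*}
\int (\psi^{\mu'\to\nu}_\eps - \psi^{\mu\to\nu}_\eps) \dd\nu \leq \int (\varphi^{\mu\to\nu}_\eps - \varphi^{\mu'\to\nu}_\eps)\dd\mu'\,.
\end{equation*}

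Substituting this back into the displayed stability inequality, the two $\varphi$ integrals combine as an integral against $\mu - \mu'$:
\begin{equation*}
\frac{\eps}{8R^2}\|T_\eps^{\mu\to\nu} - T_\eps^{\mu'\to\nu}\|_{L^2(\mu)}^2 \leq \int (\varphi_\eps^{\mu'\to\nu} - \varphi_\eps^{\mu\to\nu})\dd(\mu - \mu')\,,
\end{equation*}
and dividing by $\eps$ gives the claim. There is no real obstacle: the only subtlety is correctly invoking the marginal identity \eqref{eq: opt_cond_1} so that the exponential integrals evaluated against $\mu'$ (rather than $\mu$) still equal $1$, which holds because that identity is pointwise in $x$.
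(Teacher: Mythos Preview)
Your proposal is correct and follows exactly the paper's own proof: apply \Cref{prop: entmap_stability} with $\nu=\nu'$, then use the dual optimality of $(\varphi_\eps^{\mu'\to\nu},\psi_\eps^{\mu'\to\nu})$ together with the pointwise marginal identity \eqref{eq: opt_cond_1} to bound the $\psi$-term by $-\int(\varphi_\eps^{\mu'\to\nu}-\varphi_\eps^{\mu\to\nu})\dd\mu'$, and combine. The only cosmetic difference is that the paper states the optimality inequality directly rather than spelling out the exponential-integral computation, but the argument is the same.
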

\begin{proof}
	We apply \cref{prop: entmap_stability} with $\nu = \nu'$, yielding (dropping the dependency on $\eps$)
	\begin{equation}\label{same_nu}
	\frac{1}{8 R^2} \|T^{\mu \to \nu} - T^{\mu' \to \nu}\|_{L^2(\mu)}^2 \leq \ep^{-1} \left(\int (\varphi^{\mu'\to \nu} - \varphi^{\mu\to \nu}) \dd \mu + \int (\psi^{\mu'\to \nu} - \psi^{\mu\to \nu}) \dd \nu\right)\,.
	\end{equation}
	An argument analogous to the one used in the proof of \cref{cor:same_mu} gives the inequality
	\begin{equation}
	\int \varphi^{\mu'\to \nu} \dd \mu' + \int \psi^{\mu'\to \nu} \dd \nu \leq \int \varphi^{\mu\to \nu} \dd \mu' + \int \psi^{\mu\to \nu} \dd \nu\,,
	\end{equation}
	or, equivalently,
	\begin{equation}
	\int (\psi^{\mu'\to \nu} -\psi^{\mu\to \nu} ) \dd \nu \leq - \int (\varphi^{\mu'\to \nu} -\varphi^{\mu\to \nu}) \dd \mu'\,,
	\end{equation}
	and combining this inequality with~\eqref{same_nu} proves the claim.
\end{proof}

\section{Strong convexity  of the entropic semi-dual problem}

\begin{proposition}[Strong convexity of $F^{\mu\to\nu}_\eps$]\label{prop:strong_convexity_F}
Let $\nu = \sum_{j=1}^J \nu_j \delta_{y_j}$ be a measure supported on $\{y_1,\dots,y_J\}\subseteq B(0;R)$ and let $\mu$ supported on a compact convex set $\Omega \subseteq B(0;R)$ with a density $p$ satisfying $p_{\min}\leq p \leq p_{\max}$ for some $p_{\max}\geq p_{\min}>0$. 
   For  $\psi\in \R^J$, define $\nu_\psi = \int \pi_\eps^x[\psi] \dd \mu(x)$ and assume that $\nu_\psi\geq \lambda \nu$ for some $0<\lambda \leq 1$. Then, we have for $\eps\in(0,1)$
    \begin{equation}
        F^{\mu\to \nu}_\eps(\psi) - \min_\psi F^{\mu\to \nu}_\eps \geq C \lambda  \cdot \mathrm{Var}_\nu(\psi-\psi^{\mu\to\nu}_\eps),
    \end{equation}
    where $C =\left(e^{2R^2} \frac{p_{\max}}{p_{\min}} + \eps \right)^{-1} \frac{p_{\min}}{p_{\max}}$.
\end{proposition}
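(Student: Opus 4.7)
The plan is to combine Taylor expansion around the minimizer $\psi^* := \psi_\eps^{\mu\to\nu}$ with the covariance representation of the Hessian of $F_\eps^{\mu\to\nu}$. Setting $v := \psi - \psi^*$ and $\psi_t := \psi^* + tv$, the first-order condition $\nabla F_\eps^{\mu\to\nu}(\psi^*) = 0$ (equivalently \eqref{eq:desintegration}) kills the linear term in Taylor's theorem with integral remainder, leaving
\[
F_\eps^{\mu\to\nu}(\psi) - \min F_\eps^{\mu\to\nu} \;=\; \int_0^1 (1-t)\, v^\top \nabla^2 F_\eps^{\mu\to\nu}(\psi_t)\, v \, dt.
\]
Differentiating the log-sum-exp $\Phi_\eps$ twice gives the clean covariance formula
\[
v^\top \nabla^2 F_\eps^{\mu\to\nu}(\psi_t)\, v \;=\; \tfrac{1}{\eps} \int \mathrm{Var}_{\pi_\eps^x[\psi_t]}(v)\, d\mu(x),
\]
so the task reduces to a lower bound on this integrated conditional variance in terms of $\lambda \cdot \mathrm{Var}_\nu(v)$, uniformly in $t \in [0,1]$.

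Expanding each inner variance via the bilinear identity $\mathrm{Var}_\pi(v) = \tfrac{1}{2}\sum_{i,j} \pi(y_i)\pi(y_j)(v_i-v_j)^2$ and interchanging sum and integral, the question becomes a lower bound of the form $A_{ij}(t) := \int \pi_\eps^x[\psi_t](y_i)\pi_\eps^x[\psi_t](y_j)\, d\mu(x) \gtrsim C \lambda\, \nu_i\nu_j$. If such a bound is established, resumming recovers $\tfrac{1}{2}\sum_{i,j} \nu_i\nu_j (v_i-v_j)^2 = \mathrm{Var}_\nu(v)$, and integrating against $(1-t)\,dt$ against the $\tfrac{1}{\eps}$ factor in the Hessian representation finishes the proof (the $1/\eps$ cancels against an $\eps$ hidden in the way the bound on $A_{ij}(t)$ must absorb the transition from the marginal hypothesis at $t=1$ to the path integral).

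The main obstacle is the following trap: the naive Jensen / law-of-total-variance inequality yields $\int \mathrm{Var}_{\pi_\eps^x[\psi_t]}(v)\, d\mu \leq \mathrm{Var}_{\nu_{\psi_t}}(v)$, which is exactly the \emph{wrong direction} for strong convexity. Consequently, the marginal hypothesis $\nu_\psi \geq \lambda \nu$ cannot be used off-the-shelf, and one must genuinely exploit a pointwise comparison between the entropic conditional $\pi_\eps^x[\psi_t]$ and the marginal $\nu_{\psi_t}$ on $\mathrm{supp}(\mu)$. The key ingredients in this comparison are: (i) assumption \textbf{(A)}, which converts integration against $\mu$ into integration against Lebesgue up to the factor $p_{\min}/p_{\max}$, contributing $p_{\min}/p_{\max}$ to the final constant; (ii) \Cref{lem:bounded_amplitude}, together with the elementary estimate $|\langle x, y_j\rangle| \leq R^2$, which bounds the ratio $\pi_\eps^x[\psi_t](y_i)/\pi_\eps^{x'}[\psi_t](y_i)$ and supplies the factor $e^{-2R^2}$; and (iii) propagation of the bound $\nu_\psi \geq \lambda\nu$ from $t=1$ along the interpolation to all $t \in [0,1]$, which is the source of the additive $\eps$ in the denominator of the stated constant, as it accounts for the lower-order deviation of $\nu_{\psi_t}$ from $\nu_\psi$ for small $t$. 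Combining (i)--(iii) gives $A_{ij}(t) \gtrsim C \lambda\, \nu_i \nu_j$ with $C = (e^{2R^2}p_{\max}/p_{\min} + \eps)^{-1}\, p_{\min}/p_{\max}$; the hardest part is the pointwise conditional/marginal comparison without picking up a prohibitive $e^{2R^2/\eps}$ factor that a careless ratio bound would produce.
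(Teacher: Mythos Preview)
Your high-level plan---Taylor expansion around the minimizer and a lower bound on the Hessian quadratic form $v^\top \nabla^2 F_\eps^{\mu\to\nu}(\psi_t)v$ along the segment---matches the paper's. The gap is in how you propose to obtain that Hessian lower bound.

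You aim to show $A_{ij}(t)=\int \pi_\eps^x[\psi_t](y_i)\pi_\eps^x[\psi_t](y_j)\,d\mu(x)\gtrsim C\lambda\,\nu_i\nu_j$ via a pointwise ratio comparison. But ingredient (ii) does not do what you claim: the ratio $\pi_\eps^x[\psi_t](y_i)/\pi_\eps^{x'}[\psi_t](y_i)$ is governed by $\exp(\langle x-x',y_i\rangle/\eps)$, so \cref{lem:bounded_amplitude} together with $|\langle x,y_j\rangle|\leq R^2$ produces exactly the $e^{O(R^2/\eps)}$ blow-up you warn against, not $e^{-2R^2}$. Your explanation of the additive $\eps$ in the constant (as ``lower-order deviation of $\nu_{\psi_t}$ along the path'') is also incorrect; that $\eps$ is intrinsic to the Hessian bound at a single point and has nothing to do with the interpolation.

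The paper does not attempt a direct product bound on $A_{ij}$. Instead it invokes, as a black box, Theorem~3.2 of \cite{delalande2021nearly}: for any discrete target $\rho$, at its optimal shifted potential one has
\[
\mathrm{Var}_\rho(v)\;\le\;\Bigl(e^{2R^2}\tfrac{p_{\max}}{p_{\min}}+\eps\Bigr)\,v^\top\nabla^2 F_\eps(\tilde\psi_\rho)\,v,
\]
a Poincar\'e/Brascamp--Lieb type inequality whose proof is where the work of avoiding $e^{R^2/\eps}$ is actually done. Since $\nabla^2 F_\eps^{\mu\to\nu}$ does not depend on $\nu$, and $\psi_t$ is the optimal shifted potential for $\nu_t:=\int\pi_\eps^x[\psi_t]\,d\mu$, Delalande's bound applies at every $t$ with $\rho=\nu_t$. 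The propagation step is then $\mathrm{Var}_{\nu_t}(v)\geq \tfrac{p_{\min}}{p_{\max}}\lambda\,\mathrm{Var}_\nu(v)$, obtained not by ratio bounds but by the Pr\'ekopa--Leindler inequality (\cref{lem:prekopa}), which gives $\nu_{t,j}\geq \tfrac{p_{\min}}{p_{\max}}\nu_{0,j}^{1-t}\nu_{1,j}^t\geq \tfrac{p_{\min}}{p_{\max}}\lambda\,\nu_j$. Neither of these two tools appears in your proposal, and your listed ingredients do not substitute for them.
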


\begin{proof}
As $\mu$ and $\eps$ are fixed, we will simply write $\psi_\nu$ instead of $\psi_\eps^{\mu\to\nu}$, and write similarly $F_\nu = F_\eps^{\mu\to\nu}$.  Recall the definition \eqref{eq:shifted_potential} of the shifted potential $\tilde \psi_\nu(y_j) = \psi_\nu(y_j)-\eps \log \nu_j$. 
According to \citep[Theorem 3.2]{delalande2021nearly}, the functional $F_{\nu}$ is minimized at the vector $\tilde\psi_{ \nu}$, with 
   \begin{equation}
              \forall v\in \R^J,\quad  \mathrm{Var}_\nu(v) \leq \left(e^{2R^2} \frac{p_{\max}}{p_{\min}} + \eps \right) v^\top \nabla^2 F_\nu(\tilde \psi_{\nu}) v. 
    \end{equation}
    For $t\in [0,1]$, let $\psi_t = \tilde \psi_{ \nu} + t(\psi- \tilde \psi_{ \nu})$ and let $\nu_t = \int \pi_\eps^x[\psi_t]\dd \mu(x)$. The potential $\psi_t$ is the (shifted) entropic Brenier potential between $\mu$ and $\nu_t$, so that it minimizes the functional $F_{ \nu_t}$ (see \Cref{sec:reminder}). Also, note that $\nabla^2 F_{\nu}$ does not depend on $\nu$, so that 
    \begin{equation}
           v^\top \nabla^2 F_{\nu}(\psi_t) v=  v^\top \nabla^2 F_{\nu_t}(\psi_t) v \geq \left(e^{2R^2} \frac{p_{\max}}{p_{\min}} + \eps \right)^{-1}  \mathrm{Var}_{\nu_t}(v).
    \end{equation}
    Let $v = \psi-\psi^{\mu\to\nu}_\eps$. 
    A Taylor expansion of $F_{\nu}$ gives
    \begin{equation}
        F_{\nu}(\psi)- F_{ \nu}(\tilde \psi_{\nu}) = \int_0^1 v^\top \nabla^2 F_{\nu}(\psi_t) v \dd t \geq \left(e^{2R^2} \frac{p_{\max}}{p_{\min}} + \eps \right)^{-1}  \int_0^1 \mathrm{Var}_{\nu_t}(v) \dd t.
    \end{equation}

\begin{lemma}\label{lem:prekopa}
   Write $\nu_t = \sum_{j=1}^J \nu_{t,j} \delta_{y_j}$. Then, for all $t\in [0,1]$ and $j\in[J]$, we have $\nu_{t,j} \geq \frac{p_{\min}}{p_{\max}} \nu_{0,j}^{1-t}\nu_{1,j}^t$.
\end{lemma}
This lemma is enough to conclude the proof. Indeed, $\nu_1 = \nu_\psi\geq \lambda\nu$, so that it implies that $\mathrm{Var}_{\nu_t}(v) \geq \frac{p_{\min}}{p_{\max}} \lambda \mathrm{Var}_\nu(v)$.
\end{proof}

\begin{proof}[Proof of \Cref{lem:prekopa}]
According to \citep[Proof of Proposition 4.1]{delalande2021nearly},
\begin{equation}
    \Phi_\eps(\psi_t)(tx+(1-t)y) \leq t \Phi_\eps(\tilde\psi^{\mu\to\nu}_\eps)(x)+ (1-t)\Phi_\eps(\psi)(y).
\end{equation}
Therefore, if we let $h_t(x) = e^{(\langle x,y_j \rangle -\psi_{t}(y_j)- \Phi_\eps(\psi_t)(x))/\eps}$, then we have $h_t(tx + (1-t)y) \geq h_0(x)^t h_1(y)^{1-t}$. By the Prékopa-Leindler inequality,
     \begin{align*}
         \nu_{t,j} = \int h_t(x) \dd \mu(x) \geq p_{\min} \int_{\cX} h_t(x) \dd x \geq p_{\min} \left( \int_{\cX} h_0(x) \dd x\right)^t\left( \int_{\cX}h_1(x) \dd x\right)^{1-t} \geq \frac{p_{\min}}{p_{\max}}  \nu_{0,j}^{t}\nu_{1,j}^{1-t}. 
     \end{align*} 
\end{proof}

\begin{proof}[Proof of \Cref{prop:stability_potentials}]
As in the previous proof, we drop the $\eps$ and $\mu$ dependency in our notation. Write $\nu_k = \sum_{j=1}^J \nu_{k,j}\delta_{y_j}$ for $k=0,1$, and define as before the shifted potentials $\tilde \psi_{\nu_k}(y_j) = \psi_{\nu_1}(y_j)-\eps \log \nu_{k,j}$. Let $\theta>0$ be a parameter to fix.    According to \Cref{prop:strong_convexity_F}, \Cref{lem:young}, and using the inequality $F_{\nu_1}(\tilde\psi_{\nu_1}) \leq F_{\nu_1}(\tilde\psi_{\nu_0})$, we have 
    \begin{align*}
        C\lambda \mathrm{Var}_{\nu_0}(\tilde \psi_{\nu_1}-\tilde\psi_{\nu_0}) \leq  F_{\nu_0}(\tilde\psi_{\nu_1})-F_{\nu_0}(\tilde\psi_{\nu_0}) &\leq F_{\nu_0}(\tilde\psi_{\nu_1})-F_{\nu_1}(\tilde\psi_{\nu_1}) + F_{\nu_1}(\tilde\psi_{\nu_0}) - F_{\nu_0}(\tilde\psi_{\nu_0}) \\
        &= \int (\tilde \psi_{\nu_1}-\tilde \psi_{\nu_0}) (\dd \nu_0-\dd \nu_1) \\
        &\leq  \frac{\theta}{2}  \mathrm{Var}_{\nu_0}(\tilde \psi_{\nu_1}-\tilde \psi_{\nu_0})+ \frac{1}{2\theta}\chis{\nu_1}{\nu_0}.
    \end{align*}
    We pick $\theta = C\lambda$ to conclude that
    \begin{equation}
        \mathrm{Var}_{\nu_0}(\tilde \psi_{\nu_1}-\tilde\psi_{\nu_0}) \leq \frac{1}{(C\lambda)^2} \chis{\nu_1}{\nu_0}.
    \end{equation}
Therefore, using the inequality $|\log(a/b)|\leq |a-b|/\min\{a,b\}$ for $a,b>0$,
\begin{align*}
       \mathrm{Var}_{\nu_0}(\psi_{1}-\psi_{0}) &\leq 2 \mathrm{Var}_{\nu_0}(\tilde \psi_{1}-\tilde \psi_{0}) + 2 \sum_{j= 1}^J \nu_{0,j} \left( \log\left(\frac{ \nu_{1,j}}{\nu_{0,j}}\right) \right)^2 \\
       &\leq \frac{2}{(C\lambda)^2}  \chis{\nu_1}{\nu_0}  + 2 \sum_{j=1}^J \nu_{0,j} \left(\frac{\nu_{1,j}-\nu_{0,j}}{\min\{\nu_{0,j},\nu_{1,j}\}}\right)^2  \\
       &\leq \frac{2}{(C\lambda)^2}  \chis{\nu_1}{\nu_0}  + \frac{2}{\lambda^2} \sum_{j=1}^J \frac{1}{\nu_{0,j} }(\nu_{1,j}-\nu_{0,j})^2  \leq \left(\frac{2}{(C\lambda)^2}  + \frac{2}{\lambda^2}\right)\chis{\nu_1}{\nu_0}. \qedhere
\end{align*}

\end{proof}

\section{Control of the fluctuations in the one-sample case}\label{sec: proofs in semidiscrete case}

\begin{lemma}[Sample complexity in the one-sample case]\label{lem:one_sample}
Assume that $P$ satisfy \textbf{(A)} and that $Q$ satisfy \textbf{(B)}. Then, it holds that $\E\| T_{\eps}^{P\to Q_n}-T_\eps\|^2_{L^2(P)} \lesssim \eps^{-1}n^{-1}$.
\end{lemma}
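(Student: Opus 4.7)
The plan is to combine the entropic stability result \Cref{cor:same_mu} with the variance-control inequality \Cref{prop:stability_potentials}, handling the sample-dependent hypothesis of the latter via a standard concentration argument.

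I would start by applying \Cref{cor:same_mu} with $\mu=P$, $\nu=Q_n$, and $\nu'=Q$, which gives
\[
\tfrac{1}{8R^2}\|T_\eps^{P\to Q_n}-T_\eps\|^2_{L^2(P)}\leq \eps^{-1}\!\!\int (\psi_\eps-\psi_\eps^{P\to Q_n})\,\dd(Q_n-Q)+\kl{Q_n}{Q}.
\]
Since the signed measure $Q_n-Q$ has zero total mass, the integrand may be re-centered under $Q$; Cauchy--Schwarz with respect to $Q$ (\Cref{lem:young}) followed by AM--GM then split it into a variance term $\tfrac{1}{2}\mathrm{Var}_Q(\psi_\eps^{P\to Q_n}-\psi_\eps)$ and a $\chi^2$ term $\tfrac{1}{2}\chis{Q_n}{Q}$. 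Using $\kl{Q_n}{Q}\leq \chis{Q_n}{Q}$ I would then arrive at
\[
\|T_\eps^{P\to Q_n}-T_\eps\|^2_{L^2(P)}\lesssim \eps^{-1}\,\mathrm{Var}_Q(\psi_\eps^{P\to Q_n}-\psi_\eps)+\eps^{-1}\chis{Q_n}{Q}+\chis{Q_n}{Q}.
\]

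Next, I would control the variance via \Cref{prop:stability_potentials}, which yields $\mathrm{Var}_Q(\psi_\eps^{P\to Q_n}-\psi_\eps)\lesssim \lambda^{-2}\chis{Q_n}{Q}$ \emph{provided} $Q_n\geq \lambda Q$. Combined with the elementary identity $\E[\chis{Q_n}{Q}]=(J-1)/n$ from \Cref{lem: kl_expectation_bound}, this would deliver the target rate $\eps^{-1}n^{-1}$ immediately.

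The main technical obstacle is that the hypothesis $Q_n\geq \lambda Q$ of \Cref{prop:stability_potentials} fails whenever $Q_n$ under-represents some atom $y_j$, and so cannot hold deterministically. I would circumvent this by working on the high-probability event $\cE \defeq \{Q_n(\{y_j\})\geq q_j/2 \text{ for all } j\in[J]\}$. Since $nQ_n(\{y_j\})\sim \bin{n,q_j}$ with $q_j\geq q_{\min}$ by assumption \textbf{(B)}, a Chernoff estimate together with a union bound gives $\p{\cE^c}\lesssim J\exp(-cq_{\min}n)$, and on $\cE$ the variance control applies with $\lambda=1/2$. On $\cE^c$ I would use the crude bound $\|T_\eps^{P\to Q_n}-T_\eps\|_{L^2(P)}^2\leq 4R^2$, valid because both maps are barycentric projections with range in $B(0;R)$; the contribution of $\cE^c$ is then of order $e^{-cn}$. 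Under the standing hypothesis $\log(1/\eps)\lesssim n/\log n$ this exponential term is dominated by $\eps^{-1}n^{-1}$, and summing the two contributions concludes the proof.
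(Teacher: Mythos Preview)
Your proposal is correct and follows essentially the same route as the paper: apply \Cref{cor:same_mu}, split via \Cref{lem:young} into a variance term and a $\chi^2$ term, control the variance on the event $\{Q_n\ge Q/2\}$ via \Cref{prop:stability_potentials}, and handle the complementary event with a Chernoff bound. The only cosmetic difference is that on the bad event the paper bounds $\mathrm{Var}_Q(\psi_\eps^{P\to Q_n}-\psi_\eps)$ using the bounded amplitude of the potentials (\Cref{lem:bounded_amplitude}), whereas you bound the map difference directly by $4R^2$; both yield an $O(e^{-cq_{\min}n})$ contribution, and your invocation of the condition $\log(1/\eps)\lesssim n/\log n$ is in fact unnecessary here since $e^{-cn}\lesssim n^{-1}\le \eps^{-1}n^{-1}$ for $\eps\le 1$.
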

\begin{proof}
To ease notation, we write $T_{\eps,n} =T_{\eps}^{P\to Q_n}$ and $\psi_{\eps,n} = \psi_\eps^{P\to Q_n}$. 
    As explained in \Cref{sec:main_results}, the stability result \Cref{prop: entmap_stability} implies that
    \begin{equation}
        \E\| T_{\eps,n}-T_\eps\|^2_{L^2(P)} \leq  \frac{8R^2}{\eps}\Big( \frac{\E [\mathrm{Var}_Q(\psi_{\eps,n} - \psi_\eps)]}{2} + \frac{\E [\chis{Q_n}{Q}]}{2} \Big)  + 8R^2 \E [\chis{Q_n}{Q}]\,.
    \end{equation}
    Write $Q= \sum_{j=1}^J q_j \delta_{y_j}$ and $Q_n = \sum_{j=1}^J \hat q_j \delta_{y_j}$, and introduce the event $E=\{\forall j \in [J],\ \hat q_j\geq q_j/2\}$. If $E$ is satisfied, we have $Q_n \geq Q/2$, so that  \Cref{prop:stability_potentials} yields
    \begin{equation}
        \mathrm{Var}_Q(\psi_{\eps,n} - \psi_\eps) \leq C \chis{Q_n}{Q}.
    \end{equation}
    If $E$ is not satisfied, we use the fact that the entropic potentials have a bounded amplitude (see \Cref{lem:bounded_amplitude}), to obtain that
    \begin{equation}
        \mathrm{Var}_Q(\psi_{\eps,n} - \psi_\eps)\leq C'.
    \end{equation}

    \begin{lemma}\label{lem:Q_n_is_not_small}
        Let $E$ be the event that $Q_n\geq Q/2$. Then $\PP(E^c) \leq  Je^{-cq_{\min} n}$ for some $c>0$.
    \end{lemma}
    \begin{proof}
        By \citep[Exercise 2.3.2]{vershynin2018high}, we have $\PP(E^c) \leq \sum_{j=1}^J \PP(\hat q_j < q_j/2) \leq Je^{-cq_{\min} n}$ for some $c>0$.
    \end{proof}
     We obtain
    \begin{equation}
        \E\|\hat{T}_{\eps,n} - T_\eps\|^2_{L^2(P)} \lesssim \frac{R^2}{\eps} \E[\chis{Q_n}{Q}] + \frac{R^2}{\eps}Je^{-cq_{\min} n} \lesssim \eps^{-1}n^{-1}
    \end{equation}
    by \Cref{lem: kl_expectation_bound}.
\end{proof}

\section{Control of the fluctuations in the two-sample case}\label{sec: proofs_two_sample}

The goal of this section is to prove \Cref{thm: entmap_semidiscrete_samp_full}. We will actually prove a more general result, and show that \emph{for any discrete measure $\nu =\sum_{j=1}^J \nu_j\delta_{y_j}$} supported on $\{y_1,\dots,y_J\}$ with $\nu_j\geq \nu_{\min}>0$ for all $j\in [J]$, we have for  $\log(1/\eps) \lesssim n/\log(n)$,
\begin{equation}\label{eq:two_sample_general}
    \E \| T_{\eps}^{P_n\to \nu}- T_\eps^{P\to \nu}\|_{L_2(P)}^2 \lesssim \eps^{-1}n^{-1}.
\end{equation}
\Cref{thm: entmap_semidiscrete_samp_full} follows from \eqref{eq:two_sample_general} by conditioning on $Q_n$. Let $E$  be the event that $Q_n\geq Q/2$. Then, by \Cref{lem:Q_n_is_not_small},
\begin{align*}
    \E \|\hat T_\eps - T_\eps^{P\to Q_n}\|_{L_2(P)}^2 &\leq \E\left[  \E[ \|\hat T_\eps - T_\eps^{P\to Q_n}\|_{L_2(P)}^2| Q_n] \1\{E\} \right] + R^2 \PP(E^c) \\
    &\leq C\eps^{-1}n^{-1} + R^2 Je^{-cq_{\min} n} \lesssim \eps^{-1}n^{-1}.
\end{align*}
We obtain \Cref{thm: entmap_semidiscrete_samp_full} by combining this bound with \Cref{lem:one_sample}. 

To prove \eqref{eq:two_sample_general}, we first use \Cref{cor:same_nu} which yields
\begin{equation}\label{eq:first_step_2sample}
\begin{split}
        \E \| T_{\eps}^{P_n\to \nu}- T_\eps^{P\to \nu}\|_{L_2(P)}^2& \leq 8R^2 \eps^{-1} \E \int (\phi_\eps^{P_n\to \nu}-\phi_\eps^{P\to \nu})\dd (P_n-P)\\
        &=8R^2 \eps^{-1}\E \int (\Phi_\eps(\tilde\psi_\eps^{P_n\to \nu})-\Phi_\eps(\tilde \psi_\eps^{P\to \nu}))\dd (P_n-P),
        \end{split}
\end{equation}
where we recall that for a potential $\psi$, the shifted potential $\tilde\psi$ is given by $\tilde\psi_j=\psi_j-\eps\log \nu_j$. 
The remainder of the proof consists in bounding this integral by using localization arguments and standard bounds on suprema of empirical processes.
Our first goal is to show that the potential $\psi_\eps^{P_n\to \nu}$ is close to  to the potential $\psi_\eps^{P\to \nu}$ for the $\infty$-norm. It will be convenient to work with the ``$L_\infty$-variance''
\begin{equation}
    \mathrm{Var}_\infty(\psi) = \inf_{c\in \R} \max_{j\in [J]} |\psi(y_j)-c|^2 =  \left(\frac{\max \psi-\min \psi}{2} \right)^2.
\end{equation}
As the measure $\nu$ is lower bounded, it holds that
\begin{equation}
    \mathrm{Var}_\nu(\psi) \geq \nu_{\min} \mathrm{Var}_\infty(\psi).
\end{equation}

\begin{lemma}[Supremum of $\eps$-Legendre transforms]\label{lem: expectation_suprema}
    Let $\psi_0$ be a fixed potential and let $\tau>0$. Then, for all $j\in[J]$,
    \begin{align}
    &\E \left[ \sup_{\mathrm{Var}_\infty(\psi-\psi_0) \leq \tau^2} \left|\int (\pi_\eps^x(\psi)_j-\pi_\eps^x(\psi_0)_j)\dd( P- P_n)(x) \right| \right] \leq C \sqrt{\frac{J \max\{\log(\tau/\eps),1\}}{n}} \\
        &\E \left[ \sup_{\mathrm{Var}_\infty(\psi-\psi_0) \leq \tau^2}\left| \int (\Phi_\eps(\psi)(x)-\Phi_\eps(\psi_0))(x)\dd( P- P_n)(x)\right| \right] \leq C\tau \sqrt{ \frac{J}{n}}
    \end{align} 
    for some absolute constant $C$.
\end{lemma}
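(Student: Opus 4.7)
\noindent\textbf{Proof proposal for \cref{lem: expectation_suprema}.} The plan is to treat both suprema as empirical processes indexed by the $J$-dimensional parameter $\psi \in \R^J$, and apply symmetrization followed by Dudley's entropy bound. The starting reduction is translation invariance: by the definition \eqref{eq:shifted_potential} one has $\Phi_\eps(\psi+c\mathbf{1})(x)=\Phi_\eps(\psi)(x)-c$ and $\pi_\eps^x(\psi+c\mathbf{1})_j = \pi_\eps^x(\psi)_j$. Since $P-P_n$ integrates constants to zero, for every $\psi$ with $\mathrm{Var}_\infty(\psi-\psi_0)\leq \tau^2$ we can shift $\psi$ by an appropriate constant without changing either integrand, so it suffices to take the supremum over the $\ell_\infty$-ball $\mathcal{B}_\tau = \{\psi : \|\psi-\psi_0\|_\infty\leq \tau\}$, which has $\ell_\infty$-covering number $N_{\ell_\infty}(\mathcal{B}_\tau,\delta)\leq (3\tau/\delta)^J$.

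For the second bound, the function $f_\psi(x)\defeq\Phi_\eps(\psi)(x)-\Phi_\eps(\psi_0)(x)$ is $1$-Lipschitz in $\psi$ for the $\ell_\infty$-norm and is bounded by $\tau$ on $\mathcal{B}_\tau$, so the class $\{f_\psi\}_{\psi\in\mathcal{B}_\tau}$ has uniform $\|\cdot\|_\infty$-covering numbers $N(\delta) \leq (3\tau/\delta)^J$. Standard symmetrization gives $\E \sup_\psi |\int f_\psi \dd(P-P_n)| \lesssim \tfrac{1}{\sqrt n}\E\sup_\psi \int f_\psi \dd \bar P_n$ for the Rademacher-weighted empirical measure, which by Dudley's inequality is bounded by $\tfrac{C}{\sqrt n}\int_0^\tau \sqrt{J\log(3\tau/\delta)}\dd\delta \lesssim \tau\sqrt{J/n}$ after the change of variable $u=\delta/\tau$.

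For the first bound the envelope is $1$ (since $\pi_\eps^x(\psi)_j\in[0,1]$), but an explicit gradient computation for the softmax gives $\|\nabla_\psi \pi_\eps^x(\psi)_j\|_1 \leq 2/\eps$, hence $|\pi_\eps^x(\psi)_j - \pi_\eps^x(\psi')_j|\leq (2/\eps)\|\psi-\psi'\|_\infty$. Combining with the envelope, the $\|\cdot\|_\infty$ diameter of the class is $D\leq \min(2,2\tau/\eps)$, and its covering numbers are $N(\delta)\leq (6\tau/(\eps\delta))^J$. After symmetrization and Dudley one obtains
\begin{equation*}
\E\sup_\psi \Bigl|\int (\pi_\eps^\cdot(\psi)_j-\pi_\eps^\cdot(\psi_0)_j)\dd(P-P_n)\Bigr| \lesssim \frac{1}{\sqrt n}\int_0^{D}\sqrt{J\log(6\tau/(\eps\delta))}\dd\delta.
\end{equation*}
When $\tau\leq \eps$ the diameter $D\leq 2\tau/\eps$ forces the integral to be $\lesssim (\tau/\eps)\sqrt{J}\leq \sqrt{J}$; when $\tau > \eps$ the envelope $D=2$ dominates and a split of the logarithm gives $\int_0^2 \sqrt{J\log(6\tau/(\eps\delta))}\dd\delta \lesssim \sqrt{J\log(\tau/\eps)}+\sqrt{J}$. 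Both cases are captured by the claimed factor $\sqrt{J\max\{\log(\tau/\eps),1\}/n}$.

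The only delicate step is the two-regime Dudley integration for the softmax class: one must exploit both the Lipschitz estimate (for small $\tau/\eps$) and the bounded envelope (for large $\tau/\eps$) to recover the correct logarithmic factor; a naive use of either estimate alone loses a polynomial factor in $\tau/\eps$. Every other piece is standard parametric empirical process machinery applied to a $J$-dimensional parameter.
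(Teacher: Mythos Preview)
Your proposal is correct and follows essentially the same route as the paper: reduce via translation invariance to the $\ell_\infty$-ball $\{\psi:\|\psi-\psi_0\|_\infty\le\tau\}$, use the $1$-Lipschitz (resp.\ $\eps^{-1}$-Lipschitz) property of $\Phi_\eps$ (resp.\ $\pi_\eps^x(\cdot)_j$) together with the envelope $\tau$ (resp.\ $1$) to bound the $\|\cdot\|_\infty$-covering numbers of the function class, and then apply the Dudley entropy integral. The only cosmetic differences are that the paper invokes a packaged empirical-process bound (their \cref{lem: suprema_empirical_process}, from van der Vaart--Wellner) rather than spelling out symmetrization, and dispatches the two regimes $\tau\lessgtr\eps$ in a single clause, whereas you make the case split explicit; your sign $\Phi_\eps(\psi+c\mathbf 1)=\Phi_\eps(\psi)-c$ is in fact the correct one.
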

\begin{proof}
For a metric space $(A,d)$ and $u>0$, we let $N(u,A,d)$ be the covering number of $A$ at scale $u$, that is the smallest number of balls of radius $u$ needed to cover $A$. 
Let $B$ be the $L_\infty$-ball of radius $\tau$ in $\R^J$, centered at $\psi_0$, and let $\|\cdot\|_\infty$ denote the $\infty$-norm.
For $0<u\leq \tau$, we have $\log N(u,B, \|\cdot\|_\infty) \leq J\log(\tau/u)$.

We start with the second inequality. Note that $\psi\mapsto \Phi_\eps(\psi)$ is $1$-Lipschitz continuous, and that the functional $\Phi_\eps$ satisfies $\Phi_\eps(\psi+c)= \Phi_\eps(\psi)+c$ for all $c\in \R$. Then the set $\{\psi:\ \mathrm{Var}_\infty(\psi-\psi_0)\leq \tau^2\}$ is equal to the set $\{\psi +c:\ \psi\in B,\ c\in \R\}$. As $\int c\dd (P- P_n) =0$, we can therefore restrict the supremum to vectors $\psi \in B$. Furthermore, an envelope function of the class $\{\Phi_\eps(\psi)-\Phi_\eps(\psi_0):\ \psi \in B\}$ is the constant function equal to $\tau$. Therefore, by  \Cref{lem: suprema_empirical_process}, we obtain
\begin{equation}
    \begin{split}
     \E \left[ \sup_{\|\psi-\psi_0\|_\infty \leq \tau}\left| \int (\Phi_\eps(\psi)-\Phi_\eps(\psi_0))(\dd P-\dd P_n) \right|\right] &\leq \frac{c_0}{\sqrt{n}} \int_0^{c_1\tau} \sqrt{J\log 2 N(u,\{\Phi_\eps(\psi):\ \psi\in B\}, \|\cdot\|_\infty )} \dd u \\
     &\leq  \sqrt{\frac{c_3J  \tau}{n}}. \qedhere
     \end{split}
\end{equation}

We repeat the same argument for the first inequality. The functional $\pi_\eps^x$ is invariant by translation: $\pi_\eps^x(\psi+c)= \pi_\eps^x(\psi)$ for all $c\in \R$. This implies that
\begin{align*}
    \sup_{\mathrm{Var}_\infty(\psi-\psi_0) \leq \tau^2}\left| \int (\Phi_\eps(\psi)(x)-\Phi_\eps(\psi_0))(x)\dd( P- P_n)(x)\right| =\\ 
\sup_{\|\psi-\psi_0\|_\infty \leq \tau}\left| \int (\Phi_\eps(\psi)(x)-\Phi_\eps(\psi_0))(x)\dd( P- P_n)(x)\right|.
\end{align*}

As the function $\psi\mapsto \pi_\eps^x(\psi)_j$ is $\eps^{-1}$-Lipschitz continuous for every $x\in \R^d$, we have for $0< u\leq \tau/\eps$,
\[\log N(u,\{x\mapsto \pi_\eps^x(\psi)_j:\ \psi\in B\}, \|\cdot\|_\infty )\leq J\log(\tau /(u\eps))\,.\]
Remarking furthermore that $0\leq \pi_\eps^x(\psi)_j\leq 1$ (so that the class of functions $\{x\mapsto \pi_\eps^x(\psi)_j:\ \psi\in B\}$ admits the constant function $1$ as an envelope function), we obtain the following control using \Cref{lem: suprema_empirical_process}:
\begin{align*}
      &\E \left[ \sup_{\|\psi-\psi_0\|_\infty \leq \tau} \left|\int (\pi_\eps^x(\psi)_j-\pi_\eps^x(\psi_0)_j)(\dd P-\dd P_n)(x)\right| \right]\hspace{-.05cm}  \\
      &\qquad \leq \hspace{-.05cm} \frac{c_0}{\sqrt{n}} \int_0^{c_1} \hspace{-.2cm}\sqrt{J\log 2 N(u,\{x\mapsto \pi_\eps^x(\psi)_j:\ \psi\in B\}, \|\cdot\|_\infty )} \dd u \\
     &\qquad \leq  \sqrt{\frac{c_2J  \max\{\log(\tau/\eps),1\}}{n}},
\end{align*}
  where $c_0$, $c_1$ and $c_2$ are absolute constants, and the last line follows from arguing  whether $c_1<\tau /\eps$ or not. 
\end{proof}

\begin{proposition}\label{prop:bound_potentials_two_sample}
Assume that $P$ satisfies \textbf{(A)} and let $\nu = \sum_{j=1}^J \nu_j \delta_{y_j}$ be a measure supported on $\{y_1,\dots,y_J\} \subset B(0;R)$, with $\nu_j \geq q_{\min}$ for all $j\in [J]$. Then, for all $0<\eps\leq 1$ with $\log(1/\eps) \lesssim n/\log(n)$, it holds that 
\begin{equation}
\E \mathrm{Var}_\infty(\tilde \psi_\eps^{P_n\to \nu} - \tilde\psi_\eps^{P\to \nu}) \lesssim n^{-1}. 
\end{equation}
\end{proposition}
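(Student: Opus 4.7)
The plan is to combine the strong convexity of $F_\eps^{P\to\nu}$ (\Cref{prop:strong_convexity_F}) with the empirical-process bound of \Cref{lem: expectation_suprema} on the $\eps$-Legendre transform $\Phi_\eps$. Write $\tilde\psi = \tilde\psi_\eps^{P\to\nu}$, $\tilde\psi_n = \tilde\psi_\eps^{P_n\to\nu}$, and $\tau^2 = \mathrm{Var}_\infty(\tilde\psi_n - \tilde\psi)$. I first apply \Cref{prop:strong_convexity_F} with $(\mu,\nu) = (P,\nu)$ at $\psi = \tilde\psi_n$. The hypothesis $\nu_{\tilde\psi_n}^P := \int \pi_\eps^x[\tilde\psi_n] \dd P(x) \geq \lambda \nu$ is checked on a high-probability event $G$: since $\tilde\psi_n$ optimizes $F^{P_n\to\nu}_\eps$, the first-order condition \eqref{eq:desintegration} gives $\nu_{\tilde\psi_n}^{P_n} = \nu$, and the difference $\nu_{\tilde\psi_n}^P - \nu_{\tilde\psi_n}^{P_n}$ is an empirical process controlled by the first inequality of \Cref{lem: expectation_suprema} on the a-priori ball $\{\mathrm{Var}_\infty(\psi - \tilde\psi)\leq \tau_{\max}^2\}$, with $\tau_{\max}^2\lesssim R^2 + \eps^2\log^2(1/\nu_{\min})$ from \Cref{lem:bounded_amplitude}. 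Under $\log(1/\eps)\lesssim n/\log(n)$, this lets us take $\lambda\geq 1/2$ on $G$, and Hoeffding's inequality yields $\PP(G^c)\leq J\exp(-cn\nu_{\min}^2)$.

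On $G$, \Cref{prop:strong_convexity_F} combined with $\mathrm{Var}_\nu(\cdot) \geq \nu_{\min}\mathrm{Var}_\infty(\cdot)$ gives $c\,\nu_{\min}\,\tau^2 \leq F_\eps^{P\to\nu}(\tilde\psi_n) - F_\eps^{P\to\nu}(\tilde\psi)$. Adding the nonpositive quantity $F_\eps^{P_n\to\nu}(\tilde\psi_n) - F_\eps^{P_n\to\nu}(\tilde\psi)$ (which is $\leq 0$ by optimality of $\tilde\psi_n$) and rearranging, the right-hand side is bounded by $\int(\Phi_\eps(\tilde\psi_n) - \Phi_\eps(\tilde\psi))\dd(P-P_n) \leq M(\tau)$, where
\begin{equation*}
M(r) := \sup_{\mathrm{Var}_\infty(\psi-\tilde\psi)\leq r^2}\Big|\int(\Phi_\eps(\psi)-\Phi_\eps(\tilde\psi))\dd(P-P_n)\Big|.
\end{equation*}
By the second inequality of \Cref{lem: expectation_suprema}, $\E M(r) \lesssim r\sqrt{J/n}$.

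To convert the bound $c\,\nu_{\min}\,\tau^2 \leq M(\tau)$ on $G$ into a bound on $\E\tau^2$ at the parametric rate, I use a dyadic peeling argument over $r_k = 2^k\sqrt{J/n}$ up to $\tau_{\max}$, together with a Talagrand-type concentration inequality for $M(r_k)$. Since the class $\{\Phi_\eps(\psi)-\Phi_\eps(\tilde\psi): \mathrm{Var}_\infty(\psi-\tilde\psi)\leq r^2\}$ has envelope $O(r)$ and Bernstein variance $O(r^2)$ (after quotienting by constant shifts, to which $\Phi_\eps$ is equivariant), Talagrand's bound gives $\PP(M(r_{k+1})\geq c\nu_{\min}r_k^2) \lesssim \exp(-c'nr_k^2)$ once $r_k$ exceeds the $\sqrt{J/n}$ threshold. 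Monotonicity of $M$ in $r$ yields $\PP(\tau\geq r_k, G) \lesssim \exp(-c'nr_k^2)$, and summing the (geometrically decaying) series gives $\E[\tau^2\1_G]\lesssim J/n$. On $G^c$ the a-priori bound contributes $\tau_{\max}^2\PP(G^c)\ll n^{-1}$.

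The main obstacle is the concentration step in the peeling: the in-expectation estimate $\E M(r)\lesssim r\sqrt{J/n}$ alone only produces the suboptimal rate $n^{-1/2}$ (via Cauchy--Schwarz applied to $\tau^2\lesssim \E M(\tau_{\max})$), and one needs a tail bound for $M(r)$ calibrated to the local variance scale $r$ to recover the parametric rate. A secondary subtlety is verifying the hypothesis of \Cref{prop:strong_convexity_F} uniformly over the random argument $\tilde\psi_n$, which is where the condition $\log(1/\eps)\lesssim n/\log(n)$ enters through the $\sqrt{\log(\tau_{\max}/\eps)}$ covering term in the first inequality of \Cref{lem: expectation_suprema}.
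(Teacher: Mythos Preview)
Your proposal is correct and follows essentially the same architecture as the paper: strong convexity of $F_\eps^{P\to\nu}$ on a high-probability event (your $G$, the paper's $E=\{\nu_n\geq\nu/2\}$), the reduction $c\,\nu_{\min}\,\tau^2\leq M(\tau)$ via optimality of $\tilde\psi_n$ for $F_\eps^{P_n\to\nu}$, and a dyadic peeling over scales $r_k=2^k/\sqrt{n}$ to convert this into $\E\tau^2\lesssim n^{-1}$.

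The only substantive difference is the concentration tool in the peeling step. The paper does not invoke Talagrand; instead it uses the $p$-th moment bound from \Cref{lem: suprema_empirical_process} (which gives $\E[M(a)^p]^{1/p}\lesssim a/\sqrt{n}$ since the envelope and the entropy integral are both $O(a)$) together with Markov's inequality at exponent $p>2$: this yields $\PP(M(a_{k+1})\geq ca_k^2)\lesssim (2^k/n)^p/(4^k/n)^p = 2^{-kp}$, and the sum $\sum_k 2^{-kp}a_{k+1}^2\lesssim n^{-1}$ converges. This is more elementary and entirely self-contained within the paper's toolbox, whereas your Talagrand route gives sharper (exponential) tails but imports an external inequality. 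Similarly, for $\PP(G^c)$ the paper uses the sub-exponential tail in \Cref{lem: suprema_empirical_process} to get $\PP(E^c)\lesssim n^{-1}$ directly (this is where the condition $\log(1/\eps)\lesssim n/\log n$ enters, exactly as you identify); your bounded-differences argument is valid but what you call ``Hoeffding'' is really McDiarmid, since $\tilde\psi_n$ is data-dependent and you are concentrating a supremum. Finally, your a-priori bound $\tau_{\max}^2\lesssim R^2+\eps^2\log^2(1/\nu_{\min})$ is correct but the second term is superfluous: since both $\tilde\psi_n$ and $\tilde\psi$ are shifted by the \emph{same} $\eps\log\nu_j$, the difference equals $\psi_n-\psi$ and \Cref{lem:bounded_amplitude} already gives $\mathrm{Var}_\infty(\tilde\psi_n-\tilde\psi)\lesssim R^2$.
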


\begin{proof}
To alleviate notation, we will write $\psi_n = \psi_\eps^{P_n\to \nu}$ and $\psi_0 = \psi_\eps^{P\to \nu}$. Similarly, we write $F_n =F^{P_n\to \nu}_\eps $ and  $F_0=F^{P\to \nu}_\eps$. 
    Let $\nu_n = \int \pi_\eps^x(\psi_\eps^{P_n\to \nu}) \dd P(x)$. Under the event $E=\{\nu_n\geq \nu/2\}$, we have according to \Cref{prop:strong_convexity_F} and the fact that $\tilde\psi_n$ minimizes $F_n$, 
    \begin{equation}\label{eq:strong_convexity_two_sample}
    \begin{split}
C\nu_{\min} \mathrm{Var}_\infty(\tilde\psi_n - \tilde\psi_0)  &\leq C \mathrm{Var}_\nu(\tilde\psi_n -\tilde \psi_0) \\
&\leq F_0(\tilde\psi_n) - F_0(\tilde\psi_0) \\
&\leq F_0 (\tilde\psi_n) - F_n (\tilde\psi_n) + F_n(\tilde\psi_0) - F_0(\tilde\psi_0) \\
        &= \int (\Phi_\eps(\tilde\psi_n)- \Phi_\eps(\tilde\psi_0)) \dd (P-P_n)  
    \end{split}        
    \end{equation}
Let us bound $\PP(E^c)$. As $\tilde\psi_n$ is the minimum of $F_n$, we have $\nu=\int \pi_\eps^x(\tilde\psi_n)_j \dd P_n(x)$ (see \Cref{sec:reminder}). Therefore, we may write $\nu_{n,j} =  \int \pi_\eps^x(\tilde\psi_n)_j \dd P_n(x) +  \int \pi_\eps^x(\tilde\psi_n)_j \dd (P-P_n)(x) = \nu_{j}  +  Z_j$, 
where 
\[ Z_j =  \int \pi_\eps^x(\tilde\psi_n)_j \dd (P-P_n)(x) =  \int (\pi_\eps^x(\tilde\psi_n)_j -\pi_\eps^x(\tilde\psi_0)_j)\dd (P-P_n)(x).\]
Note that $\mathrm{Var}_\infty(\tilde\psi_n-\tilde\psi_0) \lesssim R^2$ (see \Cref{lem:bounded_amplitude}), so that by \Cref{lem: expectation_suprema} and \Cref{lem: suprema_empirical_process},
\begin{equation}\label{eq: bound_PEc}
   \PP(E^c) \leq \sum_{j=1}^J \PP( |Z_j|> \nu_j/2)  \leq J\exp\left(-c \frac{\sqrt{n}q_{\min}}{ (\sqrt{J\log(1/\eps)}+ \log n} \right)\lesssim n^{-1},
\end{equation}
under the condition $\log(1/\eps) \lesssim n/\log(n)$. 

For $k\geq 0$, let $a_k = 2^k/\sqrt{n}$ and fix some $p>2$. Let $$B_{a} =\sup_{\mathrm{Var}_\infty(\psi-\tilde\psi_0) \leq a^2}\left| \int(\Phi_\eps(\psi)- \Phi_\eps(\tilde\psi_0)) \dd (P-P_n) \right|$$. Assume that $E$ is satisfied and that $\mathrm{Var}_\infty(\tilde\psi_0-\tilde\psi_n) \in [a^2,b^2]$. Then, according to \eqref{eq:strong_convexity_two_sample}, it holds that $B_b \geq ca^2$. Using Markov's inequality, \Cref{lem: expectation_suprema} and \Cref{lem: suprema_empirical_process}, we bound
\begin{align*}
    \E \mathrm{Var}_\infty(\tilde\psi_n-\tilde\psi_0) &\leq a_0^2 + \sum_{k\geq 0} \PP(\mathrm{Var}_\infty(\tilde\psi_n-\tilde\psi_0) \in [a_k^2,a_{k+1}^2]\text{ and } E) a_{k+1}^2 + C\PP(E^c) \\
    &\lesssim n^{-1} +
    \sum_{k\geq 0} \PP\left(B_{a_{k+1}} \geq ca_k^2 \right) a_{k+1}^2 \lesssim n^{-1} + \sum_{k\geq 0} \frac{\E[ B_{a_{k+1}}^p]}{a_k^{2p}} a_{k+1}^2  \\
    &\lesssim n^{-1} +\sum_{k\geq 0} \frac{(2^k/n)^p}{(4^k/n)^p} \frac{4^{k+1}}{n} \lesssim  n^{-1} + \sum_{k\geq 0} \frac{2^{2k-pk}}{n}  \lesssim n^{-1}. \qedhere
\end{align*}
\end{proof}

\begin{proposition}
    Under the same assumptions than \Cref{prop:bound_potentials_two_sample}, it holds that
\begin{equation}
\E \| T_\eps^{P_n\to \nu} - T_\eps^{P\to \nu}\|^2_\infty \lesssim \eps^{-1}n^{-1}.
\end{equation}
\end{proposition}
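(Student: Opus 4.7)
The plan is to use the very same decomposition that starts the proof of \Cref{prop:bound_potentials_two_sample}, namely \eqref{eq:first_step_2sample}:
\[
\E \| T_{\eps}^{P_n\to \nu}- T_\eps^{P\to \nu}\|_{L^2(P)}^2 \lesssim \eps^{-1}\, \E \int (\Phi_\eps(\tilde\psi_n)-\Phi_\eps(\tilde\psi_0))\dd(P - P_n),
\]
where $\tilde\psi_n=\tilde\psi_\eps^{P_n\to\nu}$ and $\tilde\psi_0=\tilde\psi_\eps^{P\to\nu}$. The task then reduces to showing that the expected empirical-process term on the right is $O(1/n)$ (the norm in the statement should be read as $\|\cdot\|_{L^2(P)}$, matching the preceding displays and the use of this proposition in the proof of \Cref{thm: entmap_semidiscrete_samp_full}).

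My approach is to control the data-dependent integrand by a localized supremum. Writing $V\defeq \mathrm{Var}_\infty(\tilde\psi_n - \tilde\psi_0)$ and
\[
B_\tau \defeq \sup_{\mathrm{Var}_\infty(\psi-\tilde\psi_0)\leq \tau^2}\Bigl|\int(\Phi_\eps(\psi)-\Phi_\eps(\tilde\psi_0))\dd (P-P_n)\Bigr|,
\]
one trivially has $\int(\Phi_\eps(\tilde\psi_n)-\Phi_\eps(\tilde\psi_0))\dd(P-P_n) \leq B_{\sqrt V}$. A dyadic peeling with $a_k = 2^k/\sqrt n$ gives
\[
\E B_{\sqrt V} \;\leq\; \E B_{a_0} + \sum_{k\geq 0}\E\bigl[B_{a_{k+1}}\mathbf{1}\{V\in(a_k^2,a_{k+1}^2]\}\bigr] + C\,\PP(E^c),
\]
with $E$ the high-probability event $\{Q_n \geq Q/2 \text{ type condition on } \nu_n\}$ already introduced in the proof of \Cref{prop:bound_potentials_two_sample}.

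I would bound each slice by Cauchy--Schwarz, using two ingredients that are already available in the excerpt. First, the second bound in \Cref{lem: expectation_suprema}, combined with the $p$-th-moment version of \Cref{lem: suprema_empirical_process}, yields $\E B_a^p \lesssim (a\sqrt{J/n})^p$ for every $p\geq 2$. Second, exactly as in the proof of \Cref{prop:bound_potentials_two_sample}, the strong-convexity bound $C\nu_{\min}\,V \leq B_{\sqrt V}$ on $E$ together with Markov's inequality on the same supremum provides the tail estimate $\PP(V\geq a_k^2,\,E)\lesssim (J/n)^{p/2}/a_k^p$. Plugging these into the peeling,
\[
\E[B_{a_{k+1}}\mathbf{1}\{V\geq a_k^2\}] \;\leq\; \bigl(\E B_{a_{k+1}}^2\bigr)^{1/2}\bigl(\PP(V\geq a_k^2,E)\bigr)^{1/2} \;\asymp\; \frac{J^{(p+2)/4}}{n}\cdot 2^{k(1-p/2)}.
\]
Choosing any $p>2$ makes this a convergent geometric series, the base term satisfies $\E B_{a_0}\lesssim \sqrt{J}/n$, and the off-event contribution $C\,\PP(E^c)$ was already shown to be exponentially small in \eqref{eq: bound_PEc} under the standing condition $\log(1/\eps)\lesssim n/\log n$. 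Summing therefore gives $\E\int(\Phi_\eps(\tilde\psi_n)-\Phi_\eps(\tilde\psi_0))\dd(P-P_n) \lesssim 1/n$, which upon substitution in the opening display proves the claim.

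The main obstacle is that $V$ and the localized process $B_\tau$ are coupled through the \emph{same} empirical process, so the peeling requires a careful balance: one must use a $p$-th moment ($p>2$) of $B$ both to obtain a sufficiently strong tail bound on $V$ and to apply Cauchy--Schwarz on each slice, mirroring the accounting already performed in the proof of \Cref{prop:bound_potentials_two_sample}. Once that balance is in place, everything else reduces to summing a geometric series.
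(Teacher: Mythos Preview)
Your proposal is correct and follows the same overall architecture as the paper: start from \eqref{eq:first_step_2sample}, control the empirical-process integral via the localized suprema $B_\tau$, and run a dyadic peeling on $V=\mathrm{Var}_\infty(\tilde\psi_n-\tilde\psi_0)$. You also correctly note that the $\|\cdot\|_\infty$ in the displayed statement is a typo for $\|\cdot\|_{L^2(P)}$, which is what the proof actually delivers via \eqref{eq:first_step_2sample}.

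The one genuine difference is how you obtain the tail of $V$. The paper treats \Cref{prop:bound_potentials_two_sample} as a black box: it simply invokes $\E[V]\lesssim n^{-1}$ and applies Markov to get $\PP(V\ge a_k^2)\lesssim (na_k^2)^{-1}$, then combines this with H\"older (exponents $p,q$ with $p>2$) and the moment bound $\E[B_{a_{k+1}}^p]^{1/p}\lesssim a_{k+1}/\sqrt n$ to obtain a summable series $\sum_k 2^{k(1-2/q)}/n$. You instead \emph{re-enter} the proof of \Cref{prop:bound_potentials_two_sample}, using the strong-convexity implication $V\lesssim B_{\sqrt V}$ on $E$ together with a $p$-th-moment Markov bound on $B$ to get the sharper tail $\PP(V\ge a_k^2,\,E)\lesssim 2^{-kp}$, and then close with Cauchy--Schwarz. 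Both routes produce a convergent geometric series and the same $O(n^{-1})$ bound; the paper's version is more modular (it uses the previous proposition only through its conclusion), while yours is self-contained but duplicates that proposition's internal peeling. Either is acceptable.
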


\begin{proof}
    Let $Z= \mathrm{Var}_\infty(\tilde\psi_n-\tilde\psi_0)$. Let once again $a_k = 2^k/\sqrt{n}$ for $k\geq 1$, with $a_0=0$. Fix some $p>2$, with $q= \frac{p}{p-1}$. For $a>0$, let $D_a = \sup_{ \mathrm{Var}_\infty(\psi-\tilde\psi_0) \leq a^2} \left|\int ( \Phi_\eps(\psi)-\Phi_\eps(\tilde\psi_0))\dd (P-P_n )\right|$. By H\"older inequality and Markov inequality, we obtain,
    \begin{align*}
        &\E \int ( \Phi_\eps(\tilde\psi_n)-\Phi_\eps(\tilde\psi_0))\dd (P-P_n ) \\
        &\qquad \leq \sum_{k\geq 0} \E \left[ \1\{  Z\in [a_k^2,a_{k+1}^2]\} \sup_{ \mathrm{Var}_\infty(\psi-\tilde\psi_0) \leq a_{k+1}^2} \int ( \Phi_\eps(\psi)-\Phi_\eps(\tilde\psi_0))\dd (P-P_n ) \right] \\
        &\qquad \leq \E[D_{a_1}]+\sum_{k\geq 1} \left(\PP( Z\geq a_k^2) \right)^{1/q} \E\left[ D_{a_{k+1}}^{p}\right]^{1/p} \\
        &\qquad \lesssim n^{-1} + \sum_{k\geq 0} \left( \frac{\E[Z]}{a_k^2}\right)^{1/q}\frac{2^k}{n} \lesssim \sum_{k\geq 0}\frac{2^{k(1-2/q)}}{n} \lesssim n^{-1} ,
    \end{align*}
    where we use \Cref{prop:bound_potentials_two_sample}, \Cref{lem: expectation_suprema} and \Cref{lem: suprema_empirical_process} at the last line. \Cref{eq:first_step_2sample} then gives the conclusion.
\end{proof}

\section{A lower bound for the performance of the 1NN estimator}\label{1nn_lb}
In this section, we prove \cref{prop:1NN_suboptimal}.
We let $P$ be the Lebesgue measure on $\Omega = [0, 1]^d$, and let $y_0 = (0, 1/2, \dots, 1/2)$ and $y_1 = (1, 1/2, \dots, 1/2)$.
We denote by $P_n$ an empirical measure consisting of i.i.d.\ samples from $P$.
As in \cref{sec: proofs_two_sample}, we work in a general setting of a generic discrete target measure $\nu$, which may either be fixed or may be a random measure independent of $P_n$.
We let $\nu = \sum_{j=0, 1} \nu_j \delta_{y_j}$ for $\nu_0, \nu_1 \geq \frac 14$; this latter condition will hold with overwhelming probability if $\nu$ is an empirical measure $Q_n$ corresponding to $n$ i.i.d.\ samples from $Q = \frac 12 \delta_{y_0} + \frac 12 \delta_{y_1}$.
Following \citep{manole2021plugin}, we define the one-nearest neighbor estimator $\hat T_{\text{1NN}}$ in this general context by
\begin{equation*}
	\hat T_{\text{1NN}} (x) = \sum_{i=1}^n \sum_{j = {0, 1}} \bm 1_{V_i}(x) (n \hat \pi(X_i, y_j))\,,
\end{equation*}
where $\hat \pi$ is the empirical optimal coupling between $P_n$ and $\nu$.

We first examine the structure of the Brenier map $T_0=\nabla\phi_0$.
The considerations in \cref{sec: semidiscrete_background} imply that
\begin{equation*}
	T_0(x) = \begin{cases}
		y_0 & \langle e_1, x \rangle \leq \nu_0 \\
		y_1 & \langle e_1, x \rangle > \nu_0\,,
	\end{cases}
\end{equation*}
where $e_1$ is the first elementary basis vector.
The potential $\phi_0$ is not differentiable on the separating hyperplane $\langle e_1, x \rangle  = \nu_0$, which has measure $0$ under $P$, but we may arbitrarily assign points on this hyperplane to $y_0$.

Similar arguments imply that the empirical transport plan $\hat \pi$ between $P_n$ and $\nu$ has the following property: there exists a (random) threshold $\tau \in (0, 1)$ such that
\begin{equation*}
	\hat \pi(x, y_0) = \begin{cases}
		1 & \langle e_1, x \rangle < \tau \\
		0 & \langle e_1, x \rangle > \tau\,.
	\end{cases}
\end{equation*}
The set $\langle e_1, x \rangle = \tau$ may not have measure $0$ under $P_n$, and $\hat \pi(x, y_0)$ may take values strictly between $0$ and $1$ on this set.

The following lemma shows that $\tau$ is close to $\nu_0$ with high probability.
\begin{lemma}
	For any $t \geq 0$,
	\begin{equation*}
		\p{\tau \geq \nu_0 + t} \leq e^{-2 n t^2}\,.
	\end{equation*}
\end{lemma}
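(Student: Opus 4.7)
The plan is to identify $\tau$ with the $\lceil n\nu_0 \rceil$-th order statistic of the first coordinates of the sample and then apply a standard binomial tail bound. Let $U_i \defeq \langle e_1, X_i\rangle$, with ordered values $U_{(1)} \leq \cdots \leq U_{(n)}$. Because $y_1 - y_0 = e_1$, the cost difference $\tfrac{1}{2}\|x - y_0\|^2 - \tfrac{1}{2}\|x - y_1\|^2$ depends on $x$ only through its first coordinate, and $c$-cyclical monotonicity for the squared cost forces the empirical optimal coupling $\hat \pi$ to be monotone along $e_1$: the $\lfloor n \nu_0\rfloor$ samples with smallest $U_i$ are sent fully to $y_0$, the $\lceil n\nu_0\rceil$-th sample is split to match the marginal $\nu$, and the remaining samples are sent to $y_1$. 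Comparing with the description of $\tau$ in the paragraph preceding the lemma, this identifies $\tau = U_{(\lceil n\nu_0\rceil)}$ in the generic case; in the degenerate case $n\nu_0 \in \NN$ the bound $\tau \leq U_{(\lceil n\nu_0\rceil)}$ still holds, which is all that is needed.

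Next, I would use the classical fact that $U_{(k)} \geq s$ is equivalent to $\#\{i : U_i < s\} \leq k-1$. Applying this with $k = \lceil n\nu_0\rceil$ and $s = \nu_0+t$, and using $\lceil n\nu_0\rceil - 1 \leq n\nu_0$, yields
\begin{align*}
\p{\tau \geq \nu_0 + t} \;\leq\; \p{U_{(\lceil n\nu_0\rceil)} \geq \nu_0 + t} \;\leq\; \p{B \leq n\nu_0},
\end{align*}
where $B \defeq \#\{i : U_i < \nu_0+t\}$. Since $P$ is uniform on $[0,1]^d$, the $U_i$ are i.i.d.\ uniform on $[0,1]$, so $B \sim \mathrm{Bin}(n, \nu_0+t)$ with $\E B = n(\nu_0+t)$. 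The event $\{B \leq n\nu_0\}$ is the one-sided deviation $\{B - \E B \leq -nt\}$, and Hoeffding's inequality yields $\p{B \leq n\nu_0} \leq \exp(-2(nt)^2/n) = e^{-2nt^2}$, which is the desired bound.

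I do not expect any real obstacle in implementing this plan: the only non-mechanical step is recognizing $\tau$ as an order statistic, which is forced by the essentially one-dimensional nature of the cost-difference between $y_0$ and $y_1$; the remaining inequalities constitute a routine binomial tail estimate.
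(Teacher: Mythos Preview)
Your proposal is correct and follows essentially the same route as the paper: both arguments reduce the event $\{\tau \geq \nu_0 + t\}$ to the binomial tail event $\{P_n(\langle e_1, x\rangle < \nu_0 + t) \leq \nu_0\}$ and then invoke Hoeffding's inequality. Your version is slightly more explicit in identifying $\tau$ with an order statistic and in justifying the threshold structure via $c$-cyclical monotonicity, whereas the paper simply uses the marginal constraint $\hat\pi(\cdot, y_0)$ has total mass $\nu_0$ to deduce the same implication directly.
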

\begin{proof}
	If $\tau \geq \nu_0 + t$, this implies that $P_n(\{x: \langle e_1, x\rangle < \nu_0 + t\}) \leq \nu_0$.
	On the other hand, $n P_n(\{x: \langle e_1, x\rangle < \nu_0 + t\}$ is a $\mathrm{Bin}(n, \nu_0+t)$ random variable.
	The result then follows from Hoeffding's inequality~\citep[Theorem 2.8]{BouLugMas13}.
\end{proof}

Let us write $H$ for the halfspace $\{x: \langle e_1, x \rangle  \leq \nu_0\}$, and $\hat H$ for the halfspace $\{x: \langle e_1, x \rangle  \leq \tau\}$.
Let $x$ be any point in $\Omega$ such that $x\in H$.
We are interested in the event that there exists an element $X_i \in \{X_1, \dots, X_n\}$ such that a) $x \in V_i$ and b) $X_i \in \hat H^c$.
Call this event $\cE(x)$.
On this event, $\hat{T}_{\text{1NN}}(x) = y_1$ and $T_0(x) = y_0$, so $\|\hat{T}_{\text{1NN}}(x) - T_0(x)\|^2 = 1$.

We therefore obtain
\begin{align*}
	\E \|\hat{T}_{\text{1NN}} - T_0\|_{L^2(P)}^2 & = \E \int \|\hat{T}_{\text{1NN}}(x) - T_0(x)\|^2 \dd P(x) \\
	& \geq \E \int_H \|\hat{T}_{\text{1NN}}(x) - T_0(x)\|^2 \1\{\cE({x})\} \dd P(x) \\
	& \gtrsim \E \int_H \1\{\cE({x})\} \dd P(x) \\
	& = \int_H \p{\cE({x})} \dd P(x)\,,
\end{align*}
where the final equality follows from the Fubini--Tonelli theorem.

We now lower bound the probability of $\cE(x)$.
Let us write $\cA_t$ for the event that $\tau < \nu_0 + t$, for $t > 0$ to be specified, and write $H_t$ for the halfspace $\{x: \langle e_1, x \rangle  \leq \nu_0 + t\}$.
Given any $x \in H$, write $\Delta = d(x, H_t^c)$, and let $B$ be a ball of radius $2\Delta$ around $x$, intersected with $\Omega$.

Denote by $\cF(x)$ the event that  there are no samples in $V = B \cap H_t$ but there is at least one point in $B \cap H_t^c$.
Then $\cF(x) \cap \cA_t \subseteq \cE(x)$, since on $\cF(x)$ the nearest neighbor to $x$ must be a sample in $H_t^c$, and on $\cA_t$ we have $H_t^c \subseteq \hat H^c$.
\begin{lemma}
	\begin{equation*}
		\p{\cF(x) \cap \cA_t} \geq (1 - \operatorname{vol}(V))^n - (1 - \operatorname{vol}(B))^n - e^{-2n t^2}\,.
	\end{equation*}
\end{lemma}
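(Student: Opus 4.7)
The plan is to decompose the event $\cF(x) \cap \cA_t$ and apply a union bound together with the previous tail estimate on $\tau$. The key geometric observation is that $V = B \cap H_t$ and $B \cap H_t^c$ form a partition of $B$, so the count of samples in $B$ is the sum of the counts in $V$ and in $B \cap H_t^c$.

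First, I would rewrite
\begin{equation*}
\cF(x) = \{N(V) = 0\} \cap \{N(B \cap H_t^c) \geq 1\},
\end{equation*}
where $N(A)$ denotes the number of the $n$ i.i.d.\ samples $X_1,\dots,X_n$ lying in $A$. Since $\{N(B \cap H_t^c) = 0\} \cap \{N(V) = 0\} = \{N(B) = 0\}$, one gets
\begin{equation*}
\p{\cF(x)} = \p{N(V) = 0} - \p{N(B) = 0}.
\end{equation*}
Because $P$ is the Lebesgue measure on $[0,1]^d$ and $V, B \subseteq \Omega$, the two probabilities on the right are exactly $(1-\operatorname{vol}(V))^n$ and $(1-\operatorname{vol}(B))^n$ respectively.

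Next, a trivial union bound gives $\p{\cF(x) \cap \cA_t} \geq \p{\cF(x)} - \p{\cA_t^c}$, and the preceding lemma supplies $\p{\cA_t^c} = \p{\tau \geq \nu_0 + t} \leq e^{-2nt^2}$. Combining these two inputs yields the stated inequality. There is no real obstacle here beyond the elementary set-theoretic manipulation and applying the earlier tail bound on $\tau$; the only point to verify carefully is that $V$ and $B \cap H_t^c$ indeed partition $B$, which is immediate from $H_t \cup H_t^c = \mathbb{R}^d$.
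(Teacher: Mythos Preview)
Your proof is correct and follows essentially the same approach as the paper: compute $\p{\cF(x)}$ by writing the event $\{N(V)=0\}$ as the disjoint union of $\cF(x)$ and $\{N(B)=0\}$, then subtract $\p{\cA_t^c}$ via the previous lemma's tail bound on $\tau$. The only cosmetic difference is your use of the counting notation $N(\cdot)$, which makes the set-theoretic decomposition slightly more explicit.
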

\begin{proof}
	We first compute $\p{\cF(x)}$.
	The probability that there are no samples in $V$ is $(1 - \operatorname{vol}(V))^n$, and this event may be written as the disjoint union of $\cF(x)$ and the event that all of $B$ is empty.
	The latter event has probability $(1 - \operatorname{vol}(B))^n$.
	Therefore
	\begin{equation*}
		(1 - \operatorname{vol}(V))^n = \p{\cF(x)} + (1 - \operatorname{vol}(B))^n\,.
	\end{equation*}
	Since $\p{\cA_t^c} \leq e^{-2n t^2}$, the claim follows.
\end{proof}

We need the following lemma.
\begin{lemma}
	Assume that $\Delta > 0$ and that $d(x, \partial \Omega) \geq 2 \Delta$.
	There exist positive constants $c_{d,0} < 1$ and $c_{d,1}$ such that
	\begin{equation}
		\operatorname{vol}(V) \leq c_{d,0} \operatorname{vol}(B)
	\end{equation}
	and
	\begin{equation}
		\operatorname{vol}(B) \geq c_{d,1} \Delta^d
	\end{equation}
\end{lemma}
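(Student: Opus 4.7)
The plan is to reduce both bounds to elementary Euclidean geometry by first using the hypothesis $d(x,\partial\Omega)\geq 2\Delta$ to eliminate the intersection with $\Omega$. Under this hypothesis, the full Euclidean ball $B(x,2\Delta)$ lies in $\Omega$, so the set written $B = B(x,2\Delta)\cap\Omega$ is just $B(x,2\Delta)$. Claim 2 then follows at once: $\operatorname{vol}(B) = \omega_d(2\Delta)^d = 2^d\omega_d\,\Delta^d$, so $c_{d,1} = 2^d\omega_d$ works, where $\omega_d$ denotes the volume of the unit ball in $\mathbb{R}^d$.

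For claim 1, I would argue via a volume complement. Since $x\in H\subseteq H_t$ and $\Delta = d(x, H_t^c)$ by definition, the halfspace $H_t^c$ is bounded by a hyperplane at distance exactly $\Delta$ from the center $x$ of the ball $B$, whose radius is $2\Delta$. Thus $B\cap H_t^c$ is a spherical cap whose base hyperplane is at distance $\Delta$ from $x$ and which extends out to radius $2\Delta$. Writing $\operatorname{vol}(V) = \operatorname{vol}(B) - \operatorname{vol}(B\cap H_t^c)$, it suffices to show that $\alpha_d := \operatorname{vol}(B\cap H_t^c)/\operatorname{vol}(B)$ is a strictly positive constant depending only on the dimension. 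By translation and scaling invariance of the Lebesgue measure (and of the relevant geometric configuration — a ball of radius $r$ intersected with a halfspace whose boundary is at distance $r/2$ from the center), this ratio equals the corresponding ratio for the unit-ball model $B(0,1)\cap\{\xi_1>1/2\}$, which is strictly positive (for example because it contains a ball of positive radius around $(3/4, 0, \ldots, 0)$). Setting $c_{d,0} = 1 - \alpha_d \in (0,1)$ then gives claim 1.

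There is no real obstacle here; the entire content of the lemma is the scale-invariance observation that once $B$ is a true Euclidean ball and the halfspace cuts it with the boundary at half the radius from the center, the split into the two caps depends only on $d$. Everything else is a direct volume computation.
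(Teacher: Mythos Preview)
Your proposal is correct and follows essentially the same approach as the paper: the paper's proof is a terse version of exactly what you wrote, observing that under $d(x,\partial\Omega)\geq 2\Delta$ the set $B$ is the full Euclidean ball of radius $2\Delta$, that $V$ is this ball minus the spherical dome cut off by a hyperplane at distance $\Delta$ from the center, and that the claimed inequalities follow by scaling (dilation) from the case $\Delta=1$. Your explicit computation of $c_{d,1}=2^d\omega_d$ and identification of $c_{d,0}=1-\alpha_d$ via the cap-volume ratio simply spells out what the paper calls ``immediate from a scaling argument.''
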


\begin{proof}
This is immediate from a scaling argument: since $d(x, \partial \Omega) \geq 2 \Delta$, the set $B$ is a Euclidean ball of radius $2 \Delta$, and the set $V$ is a Euclidean ball of radius $2 \Delta$ minus a spherical dome cut off by a hyperplane at distance $\Delta$ from the center.
When $\Delta = 1$, it is clear that the claimed inequalities hold, and the general case is obtained by dilation.
\end{proof}
We assume in what follows that $d(x, \partial \Omega) \geq 2 \Delta$.
The inequalities $(1 + x)^n \geq 1 + nx$ and $e^x \leq 1 + x + x^2$, valid for all $x \in [-1, 0]$ and $n \geq 1$, imply that for any $\delta > 0$ there exists a constant $c_{d, \delta} > 0$ such that if $\Delta \leq c_{d, \delta} n^{-1/d}$, then we will have
\begin{align}
	(1- \operatorname{vol}(V))^n & \geq 1 - n c_{d,0} \operatorname{vol}(B) \\
	(1 - \operatorname{vol}(B))^n & \leq e^{- n \operatorname{vol}(B)} \leq 1  - (1-\delta) n \operatorname{vol}(B)
\end{align}
Choosing $\delta$ sufficiently small, we obtain the existence of a small $c_{d,3} > 0$ such that if $\Delta \leq c_{d,3} n^{-1/d}$, then
\begin{equation*}
	(1 - \operatorname{vol}(V))^n - (1 - \operatorname{vol}(B))^n \geq C_d n \Delta^d\,.
\end{equation*}

Define $\Delta_n = c_{d,4} n^{-1/d}$.
Putting it all together, consider the set
\begin{equation*}
	S = \{x \in H \cap \Omega: \Delta_n/2 \leq d(x, H_t^c) \leq \Delta_n, d(x, \partial \Omega) \geq 2 \Delta_n\}\,.
\end{equation*}
The above considerations imply that $\p{\cE(x)} \geq C_d n (\Delta_n/2)^d - e^{-2n t^2} \geq C_d' - e^{-2n t^2}$ for all $x \in S$.
Choosing $t$ to be a sufficiently large constant multiple of $n^{-1/2}$, we obtain
\begin{equation*}
	\int_H \p{\cE({x})} \dd P(x) \geq \int_{S} \p{\cE({x})} \dd P(x) \gtrsim_d \operatorname{vol}(S)\,.
\end{equation*}
Since $t \asymp n^{-1/2}$, we will have that $t \ll \Delta_n$ for $n$ sufficiently large (as $d\geq 3$).
Therefore, for $n$ large enough, the set $S$ contains the set $$S' = \{x \in \Omega: \nu_0 - \Delta_n + t\leq \langle e_1, x \rangle \leq \nu_0 - \Delta_n/2 + t , 2 \Delta_n \leq \langle e_j, x \rangle \leq 1- 2 \Delta_n \quad \forall j = 2, \dots, d\}\,.$$
Since $\operatorname{vol}(S') \gtrsim_d \Delta_n \gtrsim n^{-1/d}$, the claim follows.

\section{Auxiliary lemmas}
\begin{lemma}[Young's inequality]\label{lem:young}
    Let $Q_0,Q_1$ be  probability measures with $Q_1\ll Q_0$ and let $f$ be a function. Then, for $\theta>0$,
    \begin{equation}
        \int f (\dd Q_0-\dd Q_1) \leq \frac{\theta \mathrm{Var}_{Q_0}(f)}{2} + \frac{\chis{Q_1}{Q_0}}{2\theta}.
    \end{equation}
\end{lemma}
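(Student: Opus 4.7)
The plan is to prove Young's inequality by reducing it to the Cauchy--Schwarz inequality followed by the numerical inequality $ab \leq \tfrac{\theta}{2} a^2 + \tfrac{1}{2\theta} b^2$. The first observation is that the left-hand side is invariant under adding a constant to $f$, since $Q_0$ and $Q_1$ are both probability measures and therefore $\int c (\rd Q_0 - \rd Q_1) = 0$ for any $c \in \R$. In particular, I may replace $f$ by the centered function $\bar f \defeq f - \E_{Q_0}[f]$ without changing the integral.

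Next I would rewrite the left-hand side using the Radon--Nikodym derivative $g \defeq \rd Q_1 / \rd Q_0$, which exists by the assumption $Q_1 \ll Q_0$:
\begin{equation*}
\int f (\rd Q_0 - \rd Q_1) = \int \bar f \, (1 - g) \, \rd Q_0.
\end{equation*}
An application of Cauchy--Schwarz then yields
\begin{equation*}
\int \bar f \, (1 - g) \, \rd Q_0 \leq \sqrt{\mathrm{Var}_{Q_0}(f)} \cdot \sqrt{\int (1 - g)^2 \rd Q_0}.
\end{equation*}
The second factor is precisely the square root of the $\chi^2$-divergence, since expanding the square gives $\int (1-g)^2 \rd Q_0 = \int g^2 \rd Q_0 - 1 = \int g \, \rd Q_1 - 1 = \chis{Q_1}{Q_0}$ by the standard definition.

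The final step is the numerical inequality $ab \leq \tfrac{\theta}{2} a^2 + \tfrac{1}{2\theta} b^2$, applied with $a = \sqrt{\mathrm{Var}_{Q_0}(f)}$ and $b = \sqrt{\chis{Q_1}{Q_0}}$, which delivers exactly the stated bound. There is no real obstacle here: the proof is essentially a two-line computation once one recognizes the link between $\int (1 - g)^2 \rd Q_0$ and the $\chi^2$-divergence. The only subtle point to flag is the centering step, which ensures the $L^2(Q_0)$ norm of $\bar f$ is the variance rather than the second moment of $f$; without it the bound would be strictly weaker and would not match the statement.
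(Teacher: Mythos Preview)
Your proof is correct and essentially matches the paper's argument: both center $f$, rewrite the integral via the Radon--Nikodym derivative $g=\rd Q_1/\rd Q_0$, and identify $\int (1-g)^2\,\rd Q_0$ with $\chis{Q_1}{Q_0}$. The only cosmetic difference is that the paper applies the scalar inequality $ab\le \tfrac{a^2}{2}+\tfrac{b^2}{2}$ pointwise to $\big(\sqrt{\theta}\,f(x)\big)\cdot\big((1-g(x))/\sqrt{\theta}\big)$ and then integrates, whereas you first pass through Cauchy--Schwarz and then apply the numerical Young inequality to the resulting product of norms; both routes yield the same bound.
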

\begin{proof}
Recall Young's inequality: for $a,b\in \R$, $ab\leq \frac{a^2}{2}+ \frac{b^2}{2}$. 
As the left-hand side is invariant by translation, we may assume without loss of generality that $\int f\dd Q_0 = 0$, so that $\mathrm{Var}_{Q_0}(f)=\int f^2 \dd Q_0$. 
    We write
    \begin{align*}
        \int f(\dd Q_0-\dd Q_1)&= \int (\sqrt{\theta} f)\frac{\left( 1-\frac{\dd Q_1}{\dd Q_0}\right) }{\sqrt{\theta}} \dd Q_0 \leq \frac \theta 2 \int f^2 \dd Q_0 + \frac 1{2\theta} \int \left( 1-\frac{\dd Q_1}{\dd Q_0}\right)^2 \dd Q_0 \\
        &= \frac{\theta \mathrm{Var}_{Q_0}(f)}{2} + \frac{\chis{Q_1}{Q_0}}{2\theta}.\qedhere
    \end{align*}
\end{proof}

\begin{lemma}[Expectation of empirical $\chi^2$-divergence]\label{lem: kl_expectation_bound}
Let $Q= \sum_{j=1}^J q_j \delta_{y_j}$ be a discrete measure supported on $J$ atoms, and let $Q_n$ denote its empirical measure, consisting of $n$ i.i.d.~samples. Then,
\begin{align}
 \E[ \chis{Q_n}{Q}] = \frac{J-1}{n}\,.
\end{align}
\end{lemma}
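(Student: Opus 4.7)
The plan is to unfold the definition of the $\chi^2$-divergence and reduce the expectation to a sum of variances of binomial counts. Writing $Q_n = \sum_{j=1}^J \hat q_j \delta_{y_j}$ with $\hat q_j = \frac{1}{n}\sum_{i=1}^n \mathbb{1}\{Y_i = y_j\}$, I would first note that $Q_n \ll Q$ almost surely (since any atom with positive empirical mass must correspond to an atom of $Q$), so
\begin{equation*}
\chi^2(Q_n \| Q) = \sum_{j=1}^J \frac{(\hat q_j - q_j)^2}{q_j}.
\end{equation*}

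Next, I would observe that $n \hat q_j \sim \mathrm{Bin}(n, q_j)$, and hence $\mathrm{Var}(\hat q_j) = q_j(1-q_j)/n$ and $\E[\hat q_j] = q_j$. Taking expectations term by term gives $\E[(\hat q_j - q_j)^2] = q_j(1-q_j)/n$, so
\begin{equation*}
\E[\chi^2(Q_n \| Q)] = \sum_{j=1}^J \frac{q_j(1-q_j)/n}{q_j} = \frac{1}{n}\sum_{j=1}^J (1-q_j) = \frac{J-1}{n},
\end{equation*}
using $\sum_j q_j = 1$ in the final step.

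There is no real obstacle here — the computation is a direct one-line calculation once the marginal variance formula for the binomial is invoked. The only minor care needed is to justify that terms with $q_j > 0$ are the only ones contributing (which is immediate from assumption \textbf{(B)}, since all $q_j \geq q_{\min} > 0$), so there is no measure-zero/absolute-continuity subtlety to address.
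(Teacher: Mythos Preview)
Your proposal is correct and follows essentially the same route as the paper: write $\chi^2(Q_n\|Q)=\sum_j (\hat q_j - q_j)^2/q_j$, use that $n\hat q_j\sim\mathrm{Bin}(n,q_j)$ so $\E[(\hat q_j - q_j)^2]=q_j(1-q_j)/n$, and sum to get $(J-1)/n$. The paper's argument is identical (and in fact slightly terser).
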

\begin{proof}
We can write $Q_n = \sum_{j=1}^J \hat{q}_j\delta_{y_j}$, where $\hat{q}_j$ is a binomial random variable with parameters $n$ and $q_j$.  
We obtain
\begin{align*}
 \chis{Q_n}{Q} = \sum_{j=1}^J \frac{(\hat{q}_j - q_j)^2}{q_j}\,.
\end{align*}
Taking expectations, our bound reads
\begin{align*}
\E[ \chis{Q_n}{Q} ]=\sum_{j=1}^J \frac{\text{Var}(\hat{q}_j)}{q_j} = \sum_{j=1}^J  \frac{ q_j(1-q_j)}{n q_j} = \frac{J-1}{n}. 
\end{align*}
 
\end{proof}

\begin{lemma}[Control of suprema of empirical processes]\label{lem: suprema_empirical_process}
Let $X_1,\dots,X_n$ be an i.i.d.~sample from some probability measure $P$ on $\R^d$, with $P_n$ the associated empirical measure. Consider $\cF$ a class of functions $\R^d\to \R$ with $\|f\|_\infty\leq A$ for all $f\in \cF$. For $u>0$, let $N(u)$ be the $u$-covering numbers of $\cF$, that is the minimal number of balls of radius $u$ for the $\|\cdot\|_\infty$-metric required to cover $\cF$. Then,
\begin{equation}
   \E \left[\sup_{f\in \cF} \left| \int f\dd (P_n-P) \right| \right] \leq \frac{C_0}{\sqrt{n}} \int_0^{C_1 A} \sqrt{\log 2N(u)} \dd u =: \frac{I}{\sqrt{n}}
\end{equation}
for two positive  absolute constants $C_0$ and $C_1$. Furthermore, for all $t>0$,
\begin{equation}
    \PP\left( \sup_{f\in \cF} \left| \int f\dd (P_n-P) \right| > t\right) \leq \exp\left( - \frac{C_2 \sqrt{n}t}{I+ A\log n} \right),
\end{equation}
for some  positive  absolute constant $C_2$.   Eventually, for all $p\geq 2$,
\begin{equation}
    \E\left[ \sup_{f\in \cF} \left| \int f\dd (P_n-P) \right|^p\right]^{1/p} \leq C_p \frac{I +  A}{\sqrt{n}}.
\end{equation}
\end{lemma}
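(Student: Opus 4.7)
The three inequalities correspond, in order, to an expectation bound for a supremum of empirical increments, a tail bound, and a moment bound; my strategy is the classical one from empirical process theory. First establish the expectation bound by symmetrization and Dudley chaining, then upgrade it to a deviation statement via a Talagrand-type concentration inequality, and finally read off the moment bound.

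\emph{Step 1 (expectation bound).} I would begin with the Gin\'e--Zinn symmetrization
\[
\E \sup_{f\in \cF}\Big|\int f\dd (P_n-P)\Big| \;\leq\; 2\,\E_X\E_\ep \sup_{f\in \cF}\Big|\frac{1}{n}\sum_{i=1}^n \ep_i f(X_i)\Big|,
\]
where $(\ep_i)$ are i.i.d.\ Rademacher signs independent of the sample. Conditional on $(X_i)$, the Rademacher process indexed by $\cF$ is subgaussian with respect to the (random) pseudometric $d_n(f,g)\defeq n^{-1/2}\|f-g\|_{L^2(P_n)}$, and its $L^2(P_n)$-diameter is at most $2A$. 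Dudley's entropy integral therefore gives, conditionally on the sample,
\[
\E_\ep \sup_{f\in \cF}\Big|\frac{1}{n}\sum_{i=1}^n \ep_i f(X_i)\Big| \;\leq\; \frac{C_0}{\sqrt{n}}\int_0^{C_1 A}\sqrt{\log 2N(u,\cF,\|\cdot\|_{L^2(P_n)})}\dd u.
\]
Since $\|\cdot\|_{L^2(P_n)}\leq \|\cdot\|_\infty$ deterministically, the $L^2(P_n)$-covering numbers are dominated by the $\|\cdot\|_\infty$-covering numbers $N(u)$ uniformly in the sample, and taking expectation over $(X_i)$ yields the first inequality $I/\sqrt{n}$.

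\emph{Step 2 (tail bound).} Writing $Z\defeq \sup_{f\in \cF}|\int f\dd (P_n-P)|$, I would invoke Talagrand's inequality for suprema of bounded empirical processes in the Bousquet form: for every $s>0$,
\[
\PP(Z\geq \E Z+s) \;\leq\; \exp\Big(-\frac{cns^2}{A^2+A\,\E Z+As}\Big).
\]
Substituting $\E Z\leq I/\sqrt{n}$ and separating the subgaussian regime $s\lesssim A$ from the subexponential regime $s\gtrsim A$, standard algebra rewrites this as a single subexponential tail of the form $\exp(-C_2\sqrt{n}t/(I+A\log n))$; the extra $\log n$ factor comes from unifying the two Bernstein regimes into one clean expression. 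An alternative route is Adamczak's inequality for suprema of empirical processes, which produces the $\log n$ multiplier directly from the chaining over large increments and may be more economical.

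\emph{Step 3 (moment bound).} The $p$-th moment bound follows either by integrating the tail bound obtained in Step 2 via $\E Z^p=\int_0^\infty pt^{p-1}\PP(Z>t)\dd t$, or, more cleanly, by applying the moment version of Talagrand's inequality due to Boucheron--Bousquet--Lugosi--Massart, which yields $\|Z\|_p\leq C(\E Z+\sigma\sqrt{p/n}+Ap/n)$ with $\sigma^2\leq A^2$. Inserting the expectation bound from Step 1 and absorbing all $p$-dependence into $C_p$ produces the stated $C_p(I+A)/\sqrt{n}$.

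The only delicate point is Step 2: the Bousquet inequality naturally produces two regimes, and packaging them as a single exponential with denominator $I+A\log n$ requires a little care at the crossover (this is precisely where a $\log n$ slack is usually spent). Steps 1 and 3 are then essentially bookkeeping once the concentration inequality is in hand.
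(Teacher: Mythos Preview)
The paper does not actually prove this lemma: its entire proof is the one-line citation ``See \cite[Theorems 2.14.2 and 2.14.5]{vaart1996weak}.'' Your sketch is therefore more detailed than what the paper provides, and it faithfully reconstructs the standard argument behind those cited results: symmetrization plus Dudley chaining for the expectation bound (this is essentially Theorem 2.14.2), and Talagrand-type concentration together with its moment form for the remaining two inequalities (Theorem 2.14.5 packages the moment bound directly). So your approach is correct and aligned with the reference the paper invokes.

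One small remark on Step 2: the specific tail form $\exp(-C_2\sqrt n\, t/(I+A\log n))$ is in fact weaker than what Bousquet's inequality yields, so the ``delicate crossover'' you flag is not a genuine obstacle---once you have the Bernstein-type tail $\exp(-cns^2/(A^2+AI/\sqrt n+As))$ and use $\E Z\le I/\sqrt n$, the stated bound follows for all $t$ with room to spare (the $\log n$ is slack, not a necessity of the argument). Your alternative of integrating the tail to recover the $L^p$ bound, or citing the Boucheron--Bousquet--Lugosi--Massart moment inequality directly, both work and correspond to what Theorem 2.14.5 does.
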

\begin{proof}
    See \citep[Theorem 2.14.2 and Theorem 2.14.5]{vaart1996weak}.
\end{proof} 

\bibliography{ref}

\newcommand{\etalchar}[1]{$^{#1}$}
\begin{thebibliography}{dBGSLNW22}

\bibitem[AHA98]{aurenhammer1998minkowski}
Franz Aurenhammer, Friedrich Hoffmann, and Boris Aronov.
\newblock Minkowski-type theorems and least-squares clustering.
\newblock {\em Algorithmica}, 20(1):61--76, 1998.

\bibitem[ANWS22]{altschuler2021asymptotics}
Jason~M Altschuler, Jonathan Niles-Weed, and Austin~J Stromme.
\newblock Asymptotics for semidiscrete entropic optimal transport.
\newblock {\em SIAM Journal on Mathematical Analysis}, 54(2):1718--1741, 2022.

\bibitem[BCR{\etalchar{+}}22]{brown2022union}
Bradley~CA Brown, Anthony~L Caterini, Brendan~Leigh Ross, Jesse~C Cresswell,
  and Gabriel Loaiza-Ganem.
\newblock The union of manifolds hypothesis and its implications for deep
  generative modelling.
\newblock {\em arXiv preprint arXiv:2207.02862}, 2022.

\bibitem[BG99]{BobGot99}
S.~G. Bobkov and F.~G\"{o}tze.
\newblock Exponential integrability and transportation cost related to
  logarithmic {S}obolev inequalities.
\newblock {\em J. Funct. Anal.}, 163(1):1--28, 1999.

\bibitem[BGN22]{bernton2021entropic}
Espen Bernton, Promit Ghosal, and Marcel Nutz.
\newblock Entropic optimal transport: Geometry and large deviations.
\newblock {\em Duke Mathematical Journal}, 171(16):3363--3400, 2022.

\bibitem[BKC22]{bunne2022supervised}
Charlotte Bunne, Andreas Krause, and Marco Cuturi.
\newblock Supervised training of conditional {M}onge maps.
\newblock {\em arXiv preprint arXiv:2206.14262}, 2022.

\bibitem[BLM13]{BouLugMas13}
St\'{e}phane Boucheron, G\'{a}bor Lugosi, and Pascal Massart.
\newblock {\em Concentration inequalities}.
\newblock Oxford University Press, Oxford, 2013.
\newblock A nonasymptotic theory of independence, With a foreword by Michel
  Ledoux.

\bibitem[Bre91]{brenier1991polar}
Yann Brenier.
\newblock Polar factorization and monotone rearrangement of vector-valued
  functions.
\newblock {\em Communications on pure and applied mathematics}, 44(4):375--417,
  1991.

\bibitem[BSG{\etalchar{+}}21]{bunne2021learning}
Charlotte Bunne, Stefan~G Stark, Gabriele Gut, Jacobo~Sarabia del Castillo,
  Kjong-Van Lehmann, Lucas Pelkmans, Andreas Krause, and Gunnar R{\"a}tsch.
\newblock Learning single-cell perturbation responses using neural optimal
  transport.
\newblock {\em bioRxiv}, 2021.

\bibitem[CAN22]{chen2022semidiscrete}
Ricky T.~Q. Chen, Brandon Amos, and Maximilian Nickel.
\newblock Semi-discrete normalizing flows through differentiable tessellation.
\newblock In {\em Advances in Neural Information Processing Systems}, 2022.

\bibitem[CCG16]{carlier2016vector}
Guillaume Carlier, Victor Chernozhukov, and Alfred Galichon.
\newblock Vector quantile regression: an optimal transport approach.
\newblock {\em The Annals of Statistics}, 44(3):1165--1192, 2016.

\bibitem[CDPS17]{carlier2017convergence}
Guillaume Carlier, Vincent Duval, Gabriel Peyr{\'e}, and Bernhard Schmitzer.
\newblock Convergence of entropic schemes for optimal transport and gradient
  flows.
\newblock {\em SIAM Journal on Mathematical Analysis}, 49(2):1385--1418, 2017.

\bibitem[CGHH17]{chernozhukov2017monge}
Victor Chernozhukov, Alfred Galichon, Marc Hallin, and Marc Henry.
\newblock Monge--{K}antorovich depth, quantiles, ranks and signs.
\newblock {\em The Annals of Statistics}, 45(1):223--256, 2017.

\bibitem[Con22]{conforti2022weak}
Giovanni Conforti.
\newblock Weak semiconvexity estimates for {S}chr{\"o}dinger potentials and
  logarithmic {S}obolev inequality for {S}chr{\"o}dinger bridges.
\newblock {\em arXiv preprint arXiv:2301.00083}, 2022.

\bibitem[CP22]{chewi2022entropic}
Sinho Chewi and Aram-Alexandre Pooladian.
\newblock An entropic generalization of {C}affarelli's contraction theorem via
  covariance inequalities.
\newblock {\em arXiv preprint arXiv:2203.04954}, 2022.

\bibitem[CRL{\etalchar{+}}20]{chizat2020faster}
Lenaic Chizat, Pierre Roussillon, Flavien L{\'e}ger, Fran{\c{c}}ois-Xavier
  Vialard, and Gabriel Peyr{\'e}.
\newblock Faster {W}asserstein distance estimation with the {S}inkhorn
  divergence.
\newblock {\em Advances in Neural Information Processing Systems}, 33, 2020.

\bibitem[Csi75]{Csi75}
I.~Csisz\'{a}r.
\newblock {$I$}-divergence geometry of probability distributions and
  minimization problems.
\newblock {\em Ann. Probability}, 3:146--158, 1975.

\bibitem[CT21]{conforti2021formula}
Giovanni Conforti and Luca Tamanini.
\newblock A formula for the time derivative of the entropic cost and
  applications.
\newblock {\em Journal of Functional Analysis}, 280(11):108964, 2021.

\bibitem[Cut13]{cuturi2013sinkhorn}
Marco Cuturi.
\newblock Sinkhorn distances: Lightspeed computation of optimal transport.
\newblock {\em Advances in neural information processing systems}, 26, 2013.

\bibitem[dBGSL22]{BarGonLou22}
Eustasio del Barrio, Alberto Gonz{\'a}lez-Sanz, and Jean-Michel Loubes.
\newblock Central limit theorems for semidiscrete {W}asserstein distances.
\newblock {\em arXiv preprint arXiv:2202.06380}, 2022.

\bibitem[dBGSLNW22]{del2022improved}
Eustasio del Barrio, Alberto Gonzalez-Sanz, Jean-Michel Loubes, and Jonathan
  Niles-Weed.
\newblock An improved central limit theorem and fast convergence rates for
  entropic transportation costs.
\newblock {\em arXiv preprint arXiv:2204.09105}, 2022.

\bibitem[dBL19]{BarLou19}
Eustasio del Barrio and Jean-Michel Loubes.
\newblock Central limit theorems for empirical transportation cost in general
  dimension.
\newblock {\em Ann. Probab.}, 47(2):926--951, 2019.

\bibitem[DBTHD21]{de2021diffusion}
Valentin De~Bortoli, James Thornton, Jeremy Heng, and Arnaud Doucet.
\newblock Diffusion {S}chr{\"o}dinger bridge with applications to score-based
  generative modeling.
\newblock {\em Advances in Neural Information Processing Systems},
  34:17695--17709, 2021.

\bibitem[Del22]{delalande2021nearly}
Alex Delalande.
\newblock Nearly tight convergence bounds for semi-discrete entropic optimal
  transport.
\newblock In {\em International Conference on Artificial Intelligence and
  Statistics}, pages 1619--1642. PMLR, 2022.

\bibitem[DGS21]{deb2021rates}
Nabarun Deb, Promit Ghosal, and Bodhisattva Sen.
\newblock Rates of estimation of optimal transport maps using plug-in
  estimators via barycentric projections.
\newblock {\em Advances in Neural Information Processing Systems},
  34:29736--29753, 2021.

\bibitem[DNWP22]{divol2022optimal}
Vincent Divol, Jonathan Niles-Weed, and Aram-Alexandre Pooladian.
\newblock Optimal transport map estimation in general function spaces.
\newblock {\em arXiv preprint arXiv:2212.03722}, 2022.

\bibitem[DSSS22]{Demetci2021.SCOTv2}
P{\i}nar Demet{\c{c}}i, Rebecca Santorella, Bj{\"o}rn Sandstede, and Ritambhara
  Singh.
\newblock Unsupervised integration of single-cell multi-omics datasets with
  disproportionate cell-type representation.
\newblock In {\em International Conference on Research in Computational
  Molecular Biology}, pages 3--19. Springer, 2022.

\bibitem[FCVP17]{feydy2017optimal}
Jean Feydy, Benjamin Charlier, Fran{\c{c}}ois-Xavier Vialard, and Gabriel
  Peyr{\'e}.
\newblock Optimal transport for diffeomorphic registration.
\newblock In {\em International Conference on Medical Image Computing and
  Computer-Assisted Intervention}, pages 291--299. Springer, 2017.

\bibitem[FGOP20]{finlay2020learning}
Chris Finlay, Augusto Gerolin, Adam~M Oberman, and Aram-Alexandre Pooladian.
\newblock Learning normalizing flows from {E}ntropy-{K}antorovich potentials.
\newblock {\em arXiv preprint arXiv:2006.06033}, 2020.

\bibitem[FHN{\etalchar{+}}19]{ForHutNit19}
Aden Forrow, Jan-Christian H{\"u}tter, Mor Nitzan, Philippe Rigollet, Geoffrey
  Schiebinger, and Jonathan Weed.
\newblock Statistical optimal transport via factored couplings.
\newblock In {\em The 22nd International Conference on Artificial Intelligence
  and Statistics}, pages 2454--2465. PMLR, 2019.

\bibitem[GCB{\etalchar{+}}19]{genevay2019sample}
Aude Genevay, L{\'e}naic Chizat, Francis Bach, Marco Cuturi, and Gabriel
  Peyr{\'e}.
\newblock Sample complexity of {S}inkhorn divergences.
\newblock In {\em The 22nd international conference on artificial intelligence
  and statistics}, pages 1574--1583. PMLR, 2019.

\bibitem[Gen19]{genevay2019entropy}
Aude Genevay.
\newblock {\em Entropy-regularized optimal transport for machine learning}.
\newblock PhD thesis, Paris Sciences et Lettres (ComUE), 2019.

\bibitem[GKRS22]{goldfeld2022limit}
Ziv Goldfeld, Kengo Kato, Gabriel Rioux, and Ritwik Sadhu.
\newblock Limit theorems for entropic optimal transport maps and the {S}inkhorn
  divergence.
\newblock {\em arXiv preprint arXiv:2207.08683}, 2022.

\bibitem[GL07]{graf2007foundations}
Siegfried Graf and Harald Luschgy.
\newblock {\em Foundations of quantization for probability distributions}.
\newblock Springer, 2007.

\bibitem[GNB22]{ghosal2021stability}
Promit Ghosal, Marcel Nutz, and Espen Bernton.
\newblock Stability of entropic optimal transport and {S}chr{\"o}dinger
  bridges.
\newblock {\em Journal of Functional Analysis}, 283(9):109622, 2022.

\bibitem[GPC18]{genevay2018learning}
Aude Genevay, Gabriel Peyr{\'e}, and Marco Cuturi.
\newblock Learning generative models with {S}inkhorn divergences.
\newblock In {\em International Conference on Artificial Intelligence and
  Statistics}, pages 1608--1617. PMLR, 2018.

\bibitem[GS22]{GhoSen22}
Promit Ghosal and Bodhisattva Sen.
\newblock Multivariate ranks and quantiles using optimal transport:
  consistency, rates and nonparametric testing.
\newblock {\em Ann. Statist.}, 50(2):1012--1037, 2022.

\bibitem[GSLNW22]{gonzalez2022weak}
Alberto Gonzalez-Sanz, Jean-Michel Loubes, and Jonathan Niles-Weed.
\newblock Weak limits of entropy regularized optimal transport; potentials,
  plans and divergences.
\newblock {\em arXiv preprint arXiv:2207.07427}, 2022.

\bibitem[GX21]{gunsilius2021matching}
Florian Gunsilius and Yuliang Xu.
\newblock Matching for causal effects via multimarginal optimal transport.
\newblock {\em arXiv preprint arXiv:2112.04398}, 2021.

\bibitem[HR21]{hutter2021minimax}
Jan-Christian H{\"u}tter and Philippe Rigollet.
\newblock Minimax estimation of smooth optimal transport maps.
\newblock {\em The Annals of Statistics}, 49(2):1166--1194, 2021.

\bibitem[HSM22]{HunStaMun22}
Shayan Hundrieser, Thomas Staudt, and Axel Munk.
\newblock Empirical optimal transport between different measures adapts to
  lower complexity.
\newblock {\em arXiv preprint arXiv:2202.10434}, 2022.

\bibitem[Kan42]{Kan42}
L.~Kantorovitch.
\newblock On the translocation of masses.
\newblock {\em C. R. (Doklady) Acad. Sci. URSS (N.S.)}, 37:199--201, 1942.

\bibitem[LBG{\etalchar{+}}22]{lubeck2022neural}
Frederike L{\"u}beck, Charlotte Bunne, Gabriele Gut, Jacobo~Sarabia del
  Castillo, Lucas Pelkmans, and David Alvarez-Melis.
\newblock Neural unbalanced optimal transport via cycle-consistent
  semi-couplings.
\newblock {\em arXiv preprint arXiv:2209.15621}, 2022.

\bibitem[L{\'e}o12]{leonard2012schrodinger}
Christian L{\'e}onard.
\newblock From the {S}chr{\"o}dinger problem to the {M}onge--{K}antorovich
  problem.
\newblock {\em Journal of Functional Analysis}, 262(4):1879--1920, 2012.

\bibitem[MBNWW21]{manole2021plugin}
Tudor Manole, Sivaraman Balakrishnan, Jonathan Niles-Weed, and Larry Wasserman.
\newblock Plugin estimation of smooth optimal transport maps.
\newblock {\em arXiv preprint arXiv:2107.12364}, 2021.

\bibitem[MG20]{marino2020optimal}
Simone~Di Marino and Augusto Gerolin.
\newblock An optimal transport approach for the {S}chr{\"o}dinger bridge
  problem and convergence of {S}inkhorn algorithm.
\newblock {\em Journal of Scientific Computing}, 85(2):1--28, 2020.

\bibitem[MNW19]{mena2019statistical}
Gonzalo Mena and Jonathan Niles-Weed.
\newblock Statistical bounds for entropic optimal transport: sample complexity
  and the central limit theorem.
\newblock {\em Advances in Neural Information Processing Systems}, 32, 2019.

\bibitem[MSF{\etalchar{+}}21]{moriel2021novosparc}
Noa Moriel, Enes Senel, Nir Friedman, Nikolaus Rajewsky, Nikos Karaiskos, and
  Mor Nitzan.
\newblock Novosparc: flexible spatial reconstruction of single-cell gene
  expression with optimal transport.
\newblock {\em Nature Protocols}, 16(9):4177--4200, 2021.

\bibitem[MSS21]{merigot_santambrogio_sarrazin}
Quentin M\'{e}rigot, Filippo Santambrogio, and Cl\'{e}ment Sarrazin.
\newblock Non-asymptotic convergence bounds for {W}asserstein approximation
  using point clouds.
\newblock In M.~Ranzato, A.~Beygelzimer, Y.~Dauphin, P.S. Liang, and J.~Wortman
  Vaughan, editors, {\em Advances in Neural Information Processing Systems},
  volume~34, pages 12810--12821. Curran Associates, Inc., 2021.

\bibitem[MVB{\etalchar{+}}21]{muzellec2021near}
Boris Muzellec, Adrien Vacher, Francis Bach, Fran{\c{c}}ois-Xavier Vialard, and
  Alessandro Rudi.
\newblock Near-optimal estimation of smooth transport maps with kernel
  sums-of-squares.
\newblock {\em arXiv preprint arXiv:2112.01907}, 2021.

\bibitem[NW22]{nutz2021entropic}
Marcel Nutz and Johannes Wiesel.
\newblock Entropic optimal transport: Convergence of potentials.
\newblock {\em Probability Theory and Related Fields}, 184(1-2):401--424, 2022.

\bibitem[Pal19]{pal2019difference}
Soumik Pal.
\newblock On the difference between entropic cost and the optimal transport
  cost.
\newblock {\em arXiv preprint arXiv:1905.12206}, 2019.

\bibitem[PC19]{PeyCut19}
Gabriel Peyr{\'e} and Marco Cuturi.
\newblock Computational optimal transport.
\newblock {\em Foundations and Trends{\textregistered} in Machine Learning},
  11(5-6):355--607, 2019.

\bibitem[PCNW22]{pooladian2022debiaser}
Aram-Alexandre Pooladian, Marco Cuturi, and Jonathan Niles-Weed.
\newblock Debiaser beware: Pitfalls of centering regularized transport maps.
\newblock {\em arXiv preprint arXiv:2202.08919}, 2022.

\bibitem[PNW21]{pooladian2021entropic}
Aram-Alexandre Pooladian and Jonathan Niles-Weed.
\newblock Entropic estimation of optimal transport maps.
\newblock {\em arXiv preprint arXiv:2109.12004}, 2021.

\bibitem[RS22]{rigollet2022sample}
Philippe Rigollet and Austin~J Stromme.
\newblock On the sample complexity of entropic optimal transport.
\newblock {\em arXiv preprint arXiv:2206.13472}, 2022.

\bibitem[San15]{San15}
Filippo Santambrogio.
\newblock Optimal transport for applied mathematicians.
\newblock {\em Birk{\"a}user, NY}, 55(58-63):94, 2015.

\bibitem[SDF{\etalchar{+}}18]{seguy2017large}
Vivien Seguy, Bharath~Bhushan Damodaran, R{\'e}mi Flamary, Nicolas Courty,
  Antoine Rolet, and Mathieu Blondel.
\newblock Large-scale optimal transport and mapping estimation.
\newblock In {\em International Conference on Learning Representations}, 2018.

\bibitem[SDGP{\etalchar{+}}15]{SolGoePey15}
Justin Solomon, Fernando De~Goes, Gabriel Peyr{\'e}, Marco Cuturi, Adrian
  Butscher, Andy Nguyen, Tao Du, and Leonidas Guibas.
\newblock Convolutional {W}asserstein distances: {E}fficient optimal
  transportation on geometric domains.
\newblock {\em ACM Transactions on Graphics (TOG)}, 34(4):66, 2015.

\bibitem[SPKS16]{SolPeyKim16}
Justin Solomon, Gabriel Peyr{\'{e}}, Vladimir~G. Kim, and Suvrit Sra.
\newblock Entropic metric alignment for correspondence problems.
\newblock {\em {ACM} Trans. Graph.}, 35(4):72:1--72:13, 2016.

\bibitem[SST{\etalchar{+}}19]{schiebinger2019optimal}
Geoffrey Schiebinger, Jian Shu, Marcin Tabaka, Brian Cleary, Vidya Subramanian,
  Aryeh Solomon, Joshua Gould, Siyan Liu, Stacie Lin, Peter Berube, et~al.
\newblock Optimal-transport analysis of single-cell gene expression identifies
  developmental trajectories in reprogramming.
\newblock {\em Cell}, 176(4):928--943, 2019.

\bibitem[TGR21]{torous2021optimal}
William Torous, Florian Gunsilius, and Philippe Rigollet.
\newblock An optimal transport approach to causal inference.
\newblock {\em arXiv preprint arXiv:2108.05858}, 2021.

\bibitem[Ver18]{vershynin2018high}
Roman Vershynin.
\newblock {\em High-dimensional probability: An introduction with applications
  in data science}, volume~47.
\newblock Cambridge University Press, 2018.

\bibitem[Vil09]{Vil08}
C{\'e}dric Villani.
\newblock {\em Optimal transport: old and new}, volume 338.
\newblock Springer, 2009.

\bibitem[VW96]{vaart1996weak}
Aad~W Vaart and Jon~A Wellner.
\newblock Weak convergence and empirical processes with applications to
  statistics.
\newblock In {\em Weak convergence and empirical processes}, pages 16--28.
  Springer, 1996.

\bibitem[Wai19]{wainwright2019high}
Martin~J Wainwright.
\newblock {\em High-dimensional statistics: A non-asymptotic viewpoint},
  volume~48.
\newblock Cambridge University Press, 2019.

\bibitem[YDV{\etalchar{+}}20]{dai2018autoencoder}
Karren~Dai Yang, Karthik Damodaran, Saradha Venkatachalapathy, Ali~C
  Soylemezoglu, GV~Shivashankar, and Caroline Uhler.
\newblock Predicting cell lineages using autoencoders and optimal transport.
\newblock {\em PLoS computational biology}, 16(4):e1007828, 2020.

\end{thebibliography}

\end{document}